\newtheorem{theorem}{\indent Theorem}[section]
\newtheorem{proposition}[theorem]{\indent Proposition}
\newtheorem{definition}[theorem]{\indent Definition}
\newtheorem{lemma}[theorem]{\indent Lemma}
\newtheorem{assumption}{Assumption}[section]
\newtheorem{remark}[theorem]{\indent Remark}
\newtheorem{example}[theorem]{\indent Example}
\newtheorem{corollary}[theorem]{\indent Corollary}
\begin{document}
\title
{Dynamics of the quintic wave equation with nonlocal weak damping}
\author{Feng Zhou$^{1,*}$, Hongfang Li$^{1}$, Kaixuan Zhu$^{2}$, Xinyu Mei$^{3}$
\\
{\small \it $^{1}$College of Science, China University of Petroleum (East China),}
\\
{\small \it Qingdao, 266580, P.R. China}
\\
{\small \it $^{2}$School of Mathematics and Physics Science, Hunan University of Arts and Science,}\\
{\small \it Changde, 415000, P.R. China}\\
{\small \it $^{3}$School of Mathematics and Statistics, Yunnan University,}\\
{\small \it Kunming, 650091, China}
}
\date{}

\renewcommand{\theequation}{\arabic{section}.\arabic{equation}}
\numberwithin{equation}{section}

\maketitle

\begin{abstract}
%

This article presents a new scheme for studying the dynamics of a quintic wave equation with nonlocal weak damping in a 3D smooth bounded domain. As an application, the existence and structure of weak, strong, and exponential attractors for the solution semigroup of this equation are obtained. The investigation sheds light on the well-posedness and long-time behaviour of nonlinear dissipative evolution equations with nonlinear damping and critical nonlinearity.

\medskip
\noindent \textbf{Keywords:} Wave equation; Quintic nonlinearity; Nonlocal damping; Evolutionary system; Attractor.
\end{abstract}


\footnote[0]{\hspace{-7.4mm}
$^*$Corresponding author: zhoufeng13@upc.edu.cn
\\
MSC: 35B40, 35B41, 35L70, 37L30.
\\{\small \it E-mail addresses: \rm lihongfang@upc.edu.cn, zhukx12@163.com, meixy@ynu.edu.cn.}
}

\vspace{-1 cm}
\section{Introduction}
In this paper, we are concerned with the following wave model with a nonlocal weak damping:
\begin{equation}\label{1.1}
\begin{cases}
\partial_{t}^{2}u+Au+\mathcal{J}(\|\partial_{t}u(t)\|^{2})\partial_{t}u
+g(u)=
h(x),x\in \Omega,
\\ u|_{\partial\Omega}=0,
\\u(x,0)=u^{0}, ~\partial_{t}u(x,0)=u^{1},
\end{cases}
\end{equation}
where $\Omega\subset\mathbb{R}^{3}$ is a bounded smooth domain, $A=-\Delta$, $\|\cdot\|$ is the norm in $L^{2}(\Omega)$, $\mathcal{J}(\cdot)$ is a scalar function, $g(u)$ is a given source term and $h\in L^{2}(\Omega)$ is an external force term.


Weakly damped semilinear wave equations are used to model a wide range of oscillatory processes in various fields, including physics, engineering, biology and geoscience. The wave equations with various nonlocal damping forces have been the subject of extensive investigation by several authors in recent years. These include the nonlocal fractional damping $J(\|\nabla u\|^{2})(-\Delta)^{\theta}\partial_{t}u$ $(\frac{1}{2}\leq\theta<1)$, nonlocal strong damping $J(\|\nabla u\|^{2})(-\Delta)\partial_{t}u$ and nonlocal nonlinear damping $J(\|\nabla u\|^{2})g(\partial_{t}u)$, see \cite{chueshov,chueshov00,yangli,sunz} for more details. It is noteworthy that the nonlocal damping coefficients presented in the aforementioned papers are functions of the $L^{2}$--norm of the gradient of the displacement.


In 2013, Haraux $et$ $al$ in \cite{aloui} provided an illustrative example of the wave equation
\begin{align}\label{E1.2}
\partial_{t}^{2}u-\Delta u+\left(\int_{\Omega}\left|\partial_{t}u\right|^2dx\right)^{\frac{\alpha}{2}}\partial_{t}u=h(t,x)
\end{align}
with a nonlinear damping term depending on a power of the norm of the velocity and they established compactness properties of trajectories to the equation \eqref{E1.2} under suitable conditions. In 2014, motivated by the works of J\"{o}rgens \cite{jorgens} and Schiff \cite{schiff} on a nonlinear theory of meson fields, Lour\^{e}do $et$ $al$ \cite{louredo} introduced the term $g\left(\int_{\Omega}\left|\partial_{t}u\right|^2 d x\right)\partial_{t}u$ to describe an internal dissipation mechanism, where $u$ describing the meson field amplitude. Subsequently, the study of the wave equation with nonlocal weak damping $g\left(\int_{\Omega}\left|\partial_{t}u\right|^2dx\right)\partial_{t}u$ which is also called averaged damping gained considerable attention. For example, the wave equation like \eqref{1.1} with nonlinear boundary damping or with nonlocal nonlinear source terms was studied by Zhang $et$ $al$ in \cite{zhw1} and \cite{zhw}; Zhong $et$ $al$ made some significant progress in the dynamics of the wave equations \eqref{1.1} with a nonlocal nonlinear damping term $\|\partial_{t}u\|^{p}\partial_{t}u$ in recent years, see \cite{zp,zhong2,zhongyanzhu,zhong,zzt,zhong1,zzhaoz,zz} for more details.

It is important to note that the long-term dynamics of Eq. \eqref{1.1} depends strongly on the growth rate $q$ of the non-linearity $g$ with $g(u)\sim |u|^{q-1}u$. Historically, the growth exponent $q=3$ has been considered as a critical exponent for the case of 3D bounded domain and there exists a huge literature on the well-posedness and long-term dynamics of wave equations with $q<3$ and $q=3$, see \cite{ball,caraballo,temam,chueshov1} and references therein. Thus, it seems natural to extend these results to the sup-cubic case. However, in the supercritical case $q>4$, the global well-posedness of Eq. \eqref{1.1} is still an open problem. 

We now focus on the intriguing case where $3< q\leq 5$. In this scenario, the uniqueness of energy weak solutions remains an open question, typically addressed using Sobolev inequality techniques. To tackle this challenge, Strichartz estimates for the solutions of Eq. \eqref{1.1}, such as $u\in L_{loc}^{4}(\mathbb{R},L^{12}(\Omega))$, prove to be effective. Solutions satisfying these estimates are referred to as Shatah--Struwe (S--S) solutions (see \cite{ksz}). By employing appropriate versions of Strichartz estimates and the Morawetz--Pohozaev identity in bounded domains, one can establish the global well-posedness of S--S solutions (see \cite{bss, blp, bp}). The dynamics of S--S solutions in the context of weakly damped wave equations, where $\mathcal{J}(\cdot)\equiv$ const $>0$ or the damping coefficients explicitly depend on time, have been extensively explored (see \cite{chang,ksz,mei0,mssz,lms,sav,sav1,sav2,sunxiong} for a comprehensive survey). In 2023, Zhong $et$ $al$  demonstrated the existence of a uniform polynomial attractor for Eq. \eqref{1.1} with $\mathcal{J}(s)\equiv s^{\frac{p}{2}}$ and sub-quintic nonlinearity in a bounded smooth domain of $\mathbb{R}^3$ (see \cite{zhongyanzhu}). More recently, Zhou $et$ $al$ in \cite{zhou} investigated the dynamics of Eq. \eqref{1.1} with an additional weak anti-damping term $\mathcal{K}(\partial_{t}u)$ when the nonlinear term $g$ exhibits sub-quintic growth (see \cite{zhou}).

In this paper,  inspired by the aforementioned literature, we investigate the long-term dynamics of Eq. \eqref{1.1} with the nonlocal nonlinear damping term $\mathcal{J}(\|\partial_{t}u\|^{2})\partial_{t}u$ and quintic nonlinearity $g(u)$ satisfying the following hypotheses:
\begin{assumption}\label{A1.1}
\textbf{(J)}
$\mathcal{J}(\cdot)\in\mathcal{C}^{1}[0,+\infty)$ is strictly increasing, satisfying
\begin{enumerate}
\item either
\begin{align}\label{1.2}
\mathcal{J}(s)>0,\quad\forall s\in\mathbb{R}_{+};
\end{align}
\item or
\begin{align}\label{1.3}
s^{p+1}\leq\mathcal{J}(s)s,\quad\forall s\in\mathbb{R}_{+},
\end{align}
where $p$ is a given positive constant.
\end{enumerate}

\textbf{(GH)}
$g\in \mathcal{C}^{2}({\mathbb{R}})$ with $g(0)=0$ and
\begin{align}\label{1.4}
&|g''(s)|\leq C_{g}(1+|s|^{q-2}),\quad g'(s)\geq-\kappa_{1}+\kappa_{2}|s|^{q-1},
\\&  g(s)s-4G(s)\geq-\kappa_{3},\quad G(s)\geq \kappa_{4}|s|^{q+1}-\kappa_{5},~\forall s\in\mathbb{R},\label{1.5}
\end{align}
where $3\leq q\leq5$, $G(s)=\int_{0}^{s}g(\tau)d\tau$ and $\{\kappa_{i}\}_{i=1}^{5}$ are given positive constants. In addition, $h\in L^{2}(\Omega)$.
\end{assumption}

\begin{remark}
The presentation of the nonlocal damping coefficient $\mathcal{J}(\cdot)$ satisfying Assumption \ref{A1.1} \textbf{(J)} is based on general and abstract models. It covers not only the wave equations with linear damping ($\mathcal{J}(\cdot)\equiv const$), but also the wave equations with the damping coefficient is bounded when $\mathcal{J}(s)=\frac{a+s}{b+s}$ (hyperbolic function) or $\mathcal{J}(s)=\frac{ae^{s}}{1+be^{s}}$ (logistic function), where $a<b$ are positive constants. Another canonical example for $\mathcal{J}(s)$ is a power law, specifically $\mathcal{J}(s)=s^{p}$ or $\mathcal{J}(s)=s^{p}$+``lower order terms''.
\end{remark}

To investigate the dynamics of Eq. \eqref{1.1}, several intriguing questions emerge from the following aspects:
\begin{enumerate}
\item Dissipativity


The arbitrariness of the exponent $p$ for nonlocal damping, combined with the quintic nonlinearity $g$, introduces significant challenges in analyzing dissipativity. To address these challenges, we employed a new-type Gronwall's inequality constructed in \cite[Lemma 3.2]{zhou} to establish the dissipativity of the system generated by S--S solutions to problem \eqref{1.1}.

%
%
%
%
%

\item Asymptotic compactness

In the sub-quintic case, the so-called energy-to-Strichartz (ETS) estimate can be established as follows:
 \begin{align}\label{ETS1.5}
\|u\|_{L^4\left([t, t+1]; L^{12}(\Omega)\right)} \leqslant \mathcal{Q}\left(\left\|\xi_u(t)\right\|_{\mathscr{E}}\right)+\mathcal{Q}\left(\|h\|\right),
 \end{align}
where $\mathscr{E}=H_{0}^{1}(\Omega)\times L^{2}(\Omega)$ and $\mathcal{Q}$ is a monotone function independent of $u$ and $t$ (e.g., see \cite{chang,zhongyanzhu,zhou}). Utilizing the ETS estimate, one can derive asymptotic compactness and the existence of attractors in a manner similar to that used for the classical cubic or sub-cubic cases. In contrast, for the quintic case, the ETS estimate has only been established for $\Omega=\mathbb{R}^3$ or $\Omega=\mathbb{T}^3$ with periodic boundary conditions (e.g., see \cite{mssz} and the references therein).
To the best of our knowledge, the ETS estimate for general domains remains unresolved. Consequently, it is not possible to deduce asymptotic compactness through any control of the Strichartz norm in terms of the initial data.

To overcome the difficulties brought by the critical nonlinearity, several established techniques have been employed, including the so-called energy method developed by Ball \cite{ball}, the decomposition technique (e.g., see \cite{arrieta,zelik}) and the compensated compactness method (also known as the ``contractive function'' method, e.g., see \cite{khanmamedov1,lasiecka,sun}). These approaches have been effective in proving the asymptotic compactness of solutions. However, due to the quintic growth rate of the nonlinearity $g(u)$ in Eq. \eqref{1.1} and the unresolved status of the ETS estimate, these methods appear to be inapplicable to our context.

 %
%
%
%

In the quintic case, where $\mathcal{J}(\cdot)\equiv$ const $>0$, an intriguing approach to address the challenges posed by the quintic nonlinearity is presented by Zelik $et$ $al$ in \cite{ksz}:


\begin{enumerate}[(1)]
\item The existence and structure of the weak trajectory attractor $\mathscr{A}_{tr}=\Pi_{t\geq0}\mathscr{K}$ are established for the trajectory dynamical system generated by the Galerkin solution of problem \eqref{1.1}. Here, $\Pi_{t\geq0}u:=u|_{t\geq0}$ denotes the restriction of $u$ to $t \geq 0$, and $\mathscr{K}$ represents the set of all complete solutions to equation \eqref{1.1}.

%

\item It is shown that, every complete solution $u(t)$, $t\in\mathbb{R}$, within the weak trajectory attractor $\mathscr{A}_{tr}$ is a global strong regular solution of Eq. \eqref{1.1}. Specifically, these solutions satisfy the energy identity.

\item The existence and regularity of a compact global attractor are achieved using an energy method combined with a decomposition technique.

\end{enumerate}
As highlighted by the authors in \cite{ksz}, the trajectory attractor for the Galerkin solutions and the backward regularity of the complete solution $u(t)$ within the weak trajectory attractor $\mathscr{A}_{tr}$ play a pivotal role in establishing asymptotic compactness.
However, in our context, the presence of the nonlinear nonlocal damping term $\mathcal{J}(\|\partial_{t}u(t)\|^{2})\partial_{t}u$ complicates the application of this approach, presenting significant challenges:

\begin{itemize}
\item 

To our knowledge, the existence of the Galerkin solution for Eq. \eqref{1.1} remains unclear. The primary challenge lies in estimating energy boundedness, where we can only establish the boundedness of the Galerkin approximation $\partial_t u_N$ in the $L^2(\Omega)$ norm. This yields weak convergence $\partial_t u_N \rightharpoonup \partial_t u$ in the $L^2(\Omega)$ norm but does not guarantee that the nonlocal coefficients $\mathcal{J}\left(\left\|\partial_t u_N(t)\right\|^2\right)$ converge to $\mathcal{J}\left(\left\|\partial_t u(t)\right\|^2\right)$.


    To address this difficulty, several authors have explored the monotonicity method; see \cite{chueshov2, lions, zhong, zhou} for further details. However, this method is not applicable to our case. Specifically, to ensure that the trajectory phase space $\mathscr{K}^{+}$generated by Galerkin solutions is closed with respect to the topology induced by the embedding $\mathscr{K} \subset \Theta^{+}:=\left[L_{l o c}^{\infty}\left(\mathbb{R}_{+}, \mathscr{E}\right)\right]^{w^*}$, the Galerkin solution must be defined as a weak-star limit in $L^{\infty}(0, T ; \mathscr{E})$ of the Galerkin approximation $u_N$. Moreover, the strong convergence of the initial data in the energy space $\mathscr{E}$ cannot be assumed, which is crucial for the monotone operator method.

\item 
    In the case of linear damping, let $u_n \in \mathcal{C}([0, \infty) ; \mathscr{E})$ be a sequence of general weak (or S--S ) solutions to Eq. \eqref{1.1}. Under appropriate conditions, one can extract a subsequence $u_{n_k}$ from $u_n$ that converges to some $u \in \mathcal{C}\left([0, \infty) ; \mathscr{E}_w\right)$ as $n_k \rightarrow \infty$. Moreover, this limit $u$ remains a weak solution of Eq. \eqref{1.1}. In the scenario with nonlinear damping, it is not known whether the limit $u$ retains the property of being a weak solution to Eq. \eqref{1.1}. Consequently, the attractor might include solutions that are less regular than the S--S solutions or even functions that are not solutions to Eq. \eqref{1.1}. Thus, it becomes challenging to apply standard methods to establish the existence and structure of the trajectory attractor $\mathscr{A}_{t r}$ for the trajectory space generated by the general weak solutions of Eq. \eqref{1.1}.
\end{itemize}


Thus, a relevant question arises: is it possible to achieve asymptotic compactness for the dynamical system associated with Eq. \eqref{1.1}, particularly when it involves a general nonlinear nonlocal damping term? If this is not feasible, are there alternative methods that could provide insight into the dynamics? Exploring these questions will require the development of new methods and theories.

\item Smoothness and finite-dimensionality

In the sub-quintic case, using the standard bootstrapping arguments one can easily show that the attractor $\mathscr{A}$ for the wave Eq. \eqref{1.1} is in a more regular energy space $\mathscr{E}^{1}=\mathcal{H}^{2}\times\mathcal{H}^{1}$ ($\mathcal{H}^{s}=D((-\Delta)^{\frac{s}{2}}$), see \cite{chang,ksz} for more details. In the quintic case, when $\Omega$ is a bounded domain and $\mathcal{J}(\cdot)\equiv$ const $>0$, the smoothness of the attractor has been explored in \cite{ksz,zhongyt}. For the non-autonomous case, further regularity of the attractor has been established only for $\Omega=\mathbb{R}^3$ or $\Omega=\mathbb{T}^3$ with periodic boundary conditions, as discussed in \cite{mssz,sav2}. Up to now and to the best of our knowledge, the study of the smoothness and fractal dimension of attractor for the wave equation \eqref{1.1} defined on bounded domains with both quintic nonlinearity and nonlocal nonlinear damping term is still lacking.



We now address the following fundamental question: Is it possible to establish the existence of a finite-dimensional attractor for Equation $\eqref{1.1}$ in a higher regular phase space, such as $\mathcal{H}^2 \times \mathcal{H}^1$ ? To the best of our knowledge, no results currently address this issue, and novel approaches are required to provide an answer.
\end{enumerate}


In this paper, to confront the aforementioned challenges, we propose a novel approach for analyzing the dynamics of the wave equation \eqref{1.1}. The primary method is illustrated in Figure \ref{f1} for clarity.
\begin{enumerate}[I.]
\item In the quintic case, by applying the Gronwall's inequality established in \cite{zhou}, we obtain the dissipative of the dynamical systems in $\mathscr{E}$.
\item We utilize a newly developed framework called evolutionary systems (see \cite{cheskidov}) to study the asymptotic dynamics of S--S solutions, and thereby establish the existence and structure of the weak global attractor $\mathscr{A}_{w}$. Since the evolutionary systems $\mathfrak{E}$ generated by S--S solutions may not closed with respect to weak topology on the phase space $\mathscr{E}$, we adopt an insightful technique introduced by Cheskidov and Lu in \cite{cheskidov5}, which involves taking the closure of the evolutionary systems $\bar{\mathfrak{E}}$. Our main objective is to demonstrate that $\mathfrak{E}((-\infty, \infty))=$ $\overline{\mathfrak{E}}((-\infty, \infty))$ using a newly developed method outlined in \cite{zelik1}. By exploiting the backward regularity of complete trajectories within $\mathfrak{E}((-\infty, \infty))$ along with the standard energy method, we establish the asymptotic compactness of the S--S solutions and ultimately prove that the weak global attractor $\mathscr{A}_w$ is indeed a strongly compact attractor $\mathscr{A}_s$.

\item We investigate the strong attractor for S--S solution semigroups when restricted in $\mathscr{E}^{1}$. Taking advantage of the fact that the dynamical system $(S(t), \mathscr{E})$ has a compact global attractor, we establish the dissipativity of $\left(S(t), \mathscr{E}^1\right)$ using a decomposition technique. Subsequently, we establish the existence of the exponential attractor $\mathfrak{A}=\{\mathscr{A}_{exp}(s):s\in\mathbb{R}\}$ through a quasi-satble method. Utilizing the known results that $\mathscr{A}_{s}\subset\mathscr{A}_{exp}(s)$, $\forall s\in\mathbb{R}$ and applying attraction transitivity result, we ultimately prove that the global attractor $\mathscr{A}_{s}$ is compact in $\mathscr{E}^{1}$ and that its fractal dimension is finite.

%

\end{enumerate}
\begin{figure}[t]
\centering
\tikzstyle{format}=[rectangle,draw,thin,fill=white]
\tikzstyle{test}=[diamond,aspect=2,draw,thin,font=tiny]
\tikzstyle{point}=[coordinate,on grid,]
{\scalefont{0.7}
\begin{tikzpicture}
		\node[format,align=center] (SSS){Shatah--Struwe solutions};
		\node[format,below of=SSS,node distance=0.7cm,align=center](ESE){Evolutionary systems $\mathfrak{E}$};
        \node[format,below of=ESE,node distance=1cm,align=center](ESE1){Evolutionary systems \\ $\bar{\mathfrak{E}}$};
        \node[format,below of=ESE1,node distance=1.4cm,align=center](AW){Weak attractor \\$\mathscr{A}_{w}$};
        \node[format,left of=ESE1,node distance=2.8cm,align=center](DE){Dissipative in \\$\mathscr{E}$};
        \node[format,left of=AW,node distance=2.3cm,align=center](A1l){$A_{1}$--Property};

        \node[format,below of=AW,node distance=1.4cm,align=center](AW1)
        {$\mathscr{A}_{w}=\{\xi_{u}(0)\}$\\$\xi_{u}\in\bar{\mathfrak{E}}((-\infty,\infty))$};
       \node[format,below of=AW1,node distance=1.7cm,align=center](SA){$\mathscr{A}_{w}=\mathscr{A}_{s}$};
       \node[format,below of=SA,node distance=1.5cm,align=center](AB1){$\mathscr{A}_{s}$ bounded \\ in $\mathscr{E}^{1}$};

        \node[format,right of=AW1,node distance=3cm,align=center](EE){$\mathfrak{E}((-\infty,\infty))$\\=$
       \bar{\mathfrak{E}}((-\infty,\infty))$};

        \node[format,above of=EE,node distance=1.5cm,align=center](BAR){Backward asymptotic \\ regularity};

       \node[format,below of=EE,node distance=1.1cm,align=center](AC){Asymptotic compact};

        \node[format,right of=A1l,node distance=9.2cm,align=center](SS){Strong solutions};
        \node[format,below of=SS,node distance=1cm,align=center](SE1){$(S(t),\mathscr{E}^{1})$};
         \node[format,below of=SE1,node distance=2.1cm,align=center](DE1){Dissipative in \\$\mathscr{E}^{1}$};
         \node[format,right of=DE1,node distance=2.2cm,align=center](QM){Quasi-stable\\ method};
        \node[format,below of=AC,node distance=3.5cm,align=center](Final){$\mathscr{A}_{s}\Subset\mathscr{E}^{1}$,
        $\dim_{\mathscr{F}}^{\mathscr{E}^{1}}(\mathscr{A}_{s})<\infty$};
        \node[format,right of=AB1,node distance=6.9cm,align=center](EA){Exponential attractor \\$\mathfrak{A}$};
        \node[format,right of=EE,node distance=2cm,align=center](EM){Energy\\ method};
         \node[format,below of=AC,node distance=1cm,align=center](DM){Decomposition};

        \node[point,above of=EE,node distance=8mm](point1){};
        \node[point,below of=ESE1,node distance=7mm](point2){};
        \node[point,below of=AW,node distance=7mm](point3){};
        \node[point,below of=EE,node distance=7mm](point4){};
        \node[point,below of=DE1,node distance=7mm](point5){};
        \node[point,below of=SE1,node distance=7mm](point6){};
        \node[point,above of=DE1,node distance=10mm](point7){};
        \node[point,right of=DM,node distance=2.5cm](point8){};
        \node[point,below of=BAR,node distance=7mm](point9){};
         \node[point,right of=AW1,node distance=16mm](point10){};

\draw[->](SSS)--(ESE);
\draw[->](ESE)--(ESE1);
\draw[->](ESE1)--(AW);
\draw[->](AW)--(AW1);
\draw[->](AW1)--(SA);
\draw[->](SA)--(AB1);
\draw[->](SS)--(SE1);
\draw[->](SE1)--(DE1);
\draw[->](DE1)--(EA);
\draw[->](EE)--(AC);
\draw[-](AC)-|(SA);
\draw[-](DM)-|(AB1);
\draw[-](AB1)--(EA);
\draw[->](AC)--(DM);
\draw[->](BAR)--(EE);
\draw[->](EA)-|(Final);

\draw[-](DE)|-(point2);
\draw[-](A1l)|-(point3);
\draw[-](EM)|-(point4);
\draw[-](QM)|-(point5);
\draw[-](DM)--(point8);
\draw[-](point7)-|(point8);
\draw[-](AW1)|-(point10);
\draw[-](point10)|-(point9);
\end{tikzpicture}
\begin{center}
\raggedright
1. $\mathscr{E}^{s}=\mathcal{H}^{s+1}\times\mathcal{H}^{s}$, $\mathcal{H}^{s}=D((-\Delta)^{\frac{s}{2}})$, $s\in\mathbb{R}$. 2. $\bar{\mathfrak{E}}$: the closure of $\mathfrak{E}$ in the topology generated by $\mathcal{C}([a,\infty);\mathscr{E}_{w})$. 3. $\dim_{\mathscr{F}}^{\mathscr{E}^{1}}(\mathscr{A}_{s})$: the fractal dimension of $\mathscr{A}_{s}$ in space $\mathscr{E}^{1}$. 4. $\mathscr{A}_{s}\Subset\mathscr{E}^{1}$: the embedding $\mathscr{A}_{s}\subset\mathscr{E}^{1}$ is compact. 5. $A_{1}$--Property: $\mathfrak{E}([0,\infty))$ is pre-compact in $\mathcal{C}([0,\infty); \mathscr{E}_{w})$.
\end{center}
}
\caption{Overview of the technique.}\label{f1}
\end{figure}

The structure of our paper is outlined as follows. In Section 2, we provide a brief overview of the theory of evolutionary systems. In Section 3, the global existence and dissipativity of the S--S solutions of Eq. \eqref{1.1} are discussed in Theorem \ref{ZFT1} and Theorem \ref{ZFT2}. Then the existence, structure and properties of the weak global attractor are studied in Theorem \ref{ZFT3} and Theorem \ref{ZFT4}. In Section 4, the backward asymptotic regularity of complete trajectories within $\bar{\mathfrak{E}}((-\infty,\infty))$ is proved in Theorem \ref{ZFT5}. Following this, Theorem \ref{ZFT7} demonstrates the existence of the strong global attractor $\mathscr{A}_s$. In Section 6, we prove the existence of the exponential attractor for the strong solution of problem \eqref{1.1} in Theorem \ref{ZFT9}. Finally, Theorem \ref{ZFT10} establishes the higher regularity and finite fractal dimension of the global attractor $\mathscr{A}_s$.

Throughout the paper, $\mathcal{Q}(\cdot)$ denotes a monotone increasing function, while $C$ represents a generic constant, with indices used for clarity as needed. Additionally, distinct positive constants $C_i$, where $i \in \mathbb{N}$, are employed for specific differentiation purposes throughout the discussion.
\section{Preliminaries}
Let $\|\cdot\|$ and $\langle\cdot,\cdot\rangle$ be the usual norm and inner product in $L^{2}(\Omega)$.
For convenience, we denote $L^p=L^p(\Omega)$ ($p\geq1$), $H_0^1=H_0^1(\Omega)$, $H^2=H^2(\Omega)$. Let $\mathcal{H}^{s}=D((-\Delta)^{\frac{s}{2}})$, $\mathscr{E}^{s}=\mathcal{H}^{s+1}\times\mathcal{H}^{s}$, $s\in\mathbb{R}.$ Then, $\mathcal{H}^{0}=L^{2}$, $\mathcal{H}^{1}=H_{0}^{1}$, $\mathcal{H}^{2}=H^{2}\cap H_{0}^{1}$, and $\mathcal{H}^{-1}$ is the dual space to $H_{0}^{1}$. In particular, we denote $\mathscr{E}:=\mathscr{E}^{0}=H_{0}^{1}\times L^{2}$ and denote $\langle\langle\cdot,\cdot\rangle\rangle$ the inner product in $\mathcal{H}^{1}$.

\subsection{Strichartz estimates}
Consider the linear wave equation
\begin{align}\label{2.3}
\begin{cases}
\partial_{t}^{2}u-\Delta u=h(t), \quad\text{in~}\Omega\times\mathbb{R},
\\u(x,0)=u^{0}, ~\partial_{t}u(x,0)=u^{1}.
\end{cases}
\end{align}
Then we have the following so-called Strichartz estimates, and its proof can be found in \cite{bss}.
\begin{lemma}\label{SL}
Suppose $2<p_{1}\leq\infty$, $2\leq q_{1}<\infty$ and $(p_{1},q_{1},r_{1})$ is a triple satisfying
\begin{align}\label{2.4}
\frac{1}{p_{1}}+\frac{3}{q_{1}}=\frac{3}{2}-r_{1},\quad
\frac{3}{p_{1}}+\frac{2}{q_{1}}\leq1,
\end{align}
and $(p_{2}',q_{2}',1-r_{1})$ also satisfies the above conditions. Then we have the following estimates for solutions $u$ to \eqref{2.3} satisfying Dirichlet or Neumann homogeneous boundary conditions
\begin{align}
\|u\|_{L^{p_{1}}([-T,T];L^{q_{1}})}
\leq C\Big(\|u^{0}\|_{\mathcal{H}^{r_{1}}}+\|u^{1}\|_{\mathcal{H}^{r_{1}-1}}
+\|h\|_{L^{p_{2}}([-T,T];L^{q_{2}})}\Big)
\end{align}
with $C$ some positive constant may depending on $T$.
\end{lemma}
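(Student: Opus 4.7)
The plan is to split the estimate into a homogeneous piece and an inhomogeneous piece and to treat them separately. Writing $u = u_{\mathrm{hom}} + u_{\mathrm{inh}}$, where $u_{\mathrm{hom}}$ solves \eqref{2.3} with $h\equiv 0$ and $u_{\mathrm{inh}}$ is the Duhamel term, the heart of the matter is the homogeneous bound
\[
\|u_{\mathrm{hom}}\|_{L^{p_1}([-T,T];L^{q_1})} \leq C\bigl(\|u^0\|_{\mathcal{H}^{r_1}}+\|u^1\|_{\mathcal{H}^{r_1-1}}\bigr).
\]
The second hypothesis on $(p_2',q_2',1-r_1)$ is precisely what is needed to dualise this estimate: applying it to the adjoint evolution and combining via the standard $TT^\ast$ / Christ--Kiselev machinery converts a homogeneous Strichartz bound for $(p_1,q_1,r_1)$ into an inhomogeneous bound with source in $L^{p_2}_tL^{q_2}_x$.

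For the homogeneous estimate, I would diagonalise $A=-\Delta$ in the Dirichlet (or Neumann) eigenbasis and write
\[
u_{\mathrm{hom}}(t)=\cos(t\sqrt{A})u^0 + \tfrac{\sin(t\sqrt{A})}{\sqrt{A}}u^1,
\]
then perform a Littlewood--Paley decomposition $u_{\mathrm{hom}}=\sum_{N\ \mathrm{dyadic}} P_N u_{\mathrm{hom}}$ adapted to $A$. By the square-function estimate for $P_N$ on $L^{q_1}(\Omega)$ and Bernstein-type inequalities, it reduces to a fixed-frequency bound of the form $\|P_N e^{\pm it\sqrt{A}}f\|_{L^{p_1}L^{q_1}}\lesssim N^{r_1}\|f\|_{L^2}$, with the powers of $N$ then summing (with the correct weights) to recover the $\mathcal{H}^{r_1}\times\mathcal{H}^{r_1-1}$ norm of the data.

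For the fixed-frequency piece I would invoke the Smith--Sogge parametrix construction of \cite{bss}: in local coordinates straightening $\partial\Omega$, the half-wave propagator $e^{it\sqrt{A}}P_N$ is approximated by a Fourier integral operator built from wave packets reflecting off the boundary, which is valid on a time window of length comparable to $N^{-1/3}$ (the Melrose--Taylor scale). On each such window one obtains a Strichartz-type bound, and $[-T,T]$ is partitioned into roughly $T N^{1/3}$ such windows. Summing the contributions produces a net loss of a power of $N^{1/3}$, which is exactly why the admissibility condition here is the weaker $\tfrac{3}{p_1}+\tfrac{2}{q_1}\leq 1$ rather than the sharp Euclidean condition $\tfrac{2}{p_1}+\tfrac{2}{q_1}\leq 1$; the scaling identity $\tfrac{1}{p_1}+\tfrac{3}{q_1}=\tfrac{3}{2}-r_1$ fixes the correct number of derivatives.

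The main obstacle is the fixed-frequency estimate on a bounded domain: free-space dispersive estimates cannot be transplanted to $\Omega$ because reflections off $\partial\Omega$ destroy the decay of the propagator on times longer than $N^{-1/3}$. Overcoming this is exactly the content of the parametrix construction in \cite{bss}; once that bound is in hand, the passage through Littlewood--Paley summation and the $TT^\ast$ step to reach the inhomogeneous estimate is routine, and the constant $C$ depends on $T$ only through the number of Melrose--Taylor windows it contains.
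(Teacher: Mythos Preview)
The paper does not supply its own proof of this lemma: immediately before the statement it says that the proof ``can be found in \cite{bss}'' and gives no further argument. So there is nothing in the paper to compare against beyond the citation itself.

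Your outline is in fact a sketch of the Blair--Smith--Sogge argument that \cite{bss} carries out, so you are providing strictly more than the paper does, and the overall architecture you describe---splitting into homogeneous and Duhamel parts, spectral Littlewood--Paley decomposition, a frequency-localised Strichartz bound obtained from a short-time parametrix on a manifold-with-boundary, then summing over time windows and passing to the inhomogeneous estimate via $TT^\ast$/Christ--Kiselev---is the correct strategy. One point to double-check against \cite{bss} is the precise mechanism and time scale you attribute to the parametrix step: in that paper the key device is to double the domain across $\partial\Omega$ and work with a metric of limited (Lipschitz) regularity on the doubled manifold, with the associated short-time scale and loss coming from that roughness rather than from a Melrose--Taylor diffractive parametrix per se; the exponent in your $N^{-1/3}$ window length does not obviously match the computation there, though the qualitative conclusion (a weakened admissibility line $\tfrac{3}{p_1}+\tfrac{2}{q_1}\le 1$) is right. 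This is a detail of attribution rather than a gap in the logic of your sketch.
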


Indeed, when $p_{1}=4$, $q_{1}=12$, $r_{1}=1$, $p_{2}=1$ and $q_{2}=2$, we get the important special case
\begin{align}\label{2.6*}
\|u\|_{L^{4}([-T,T];L^{12})}
\leq C\Big(\|u^{0}\|_{\mathcal{H}^{1}}+\|u^{1}\|_{L^{2}}
+\|h\|_{L^{1}([-T,T];L^{2})}\Big).
\end{align}
\subsection{Evolutionary systems}
We recall some basic ideas and results from the abstract theory of evolutionary systems, see \cite{cheskidov,cheskidov3,cheskidov4,cheskidov5} for details. Let $(\mathcal{X},d_{s}(\cdot,\cdot))$ be a metric space endowed with a metric $d_{s}$, which will be referred to as a strong metric. Let $d_{w}(\cdot,\cdot)$ be another metric on $\mathcal{X}$ satisfying the following conditions:
\begin{enumerate}
\item $\mathcal{X}$ is $d_{w}$-compact.
\item If $d_{s}(u_{n},v_{n})\rightarrow0$ as $n\rightarrow\infty$ for some $u_{n}$, $v_{n}\in\mathcal{X}$, then $d_{w}(u_{n},v_{n})\rightarrow0$.
\end{enumerate}
Due to property 2, $d_{w}(\cdot,\cdot)$ and $d_{s}(\cdot,\cdot)$ will be referred to as weak metric and strong metric respectively. Let $\mathcal{C}([a,b];\mathcal{X}_{\bullet})$, where $\bullet=s$ or $w$, be the space of $d_{\bullet}$-continuous $\mathcal{X}$-valued functions on $[s,t]$ endowed with the metric
\begin{align*}
d_{\mathcal{C}([a,b];\mathcal{X}_{\bullet})}(u,v):=\sup_{t\in[a,b]}d_{\bullet}(u(t),v(t)).
\end{align*}
Let also $\mathcal{C}([a,\infty);\mathcal{X}_{\bullet})$ be the space of $d_{\bullet}$-continuous $\mathcal{X}$-valued functions on $[a,\infty)$ endowed with the metric
\begin{align}\label{2.5}
d_{\mathcal{C}([a,\infty);\mathcal{X}_{\bullet})}(u,v):=\sum_{K\in\mathbb{N}}
\frac{1}{2^{K}}\frac{d_{\mathcal{C}([a,a+K];\mathcal{X}_{\bullet})}(u,v)}
{1+d_{\mathcal{C}([a,a+K];\mathcal{X}_{\bullet})}(u,v)}.
\end{align}
To define an evolutionary systems, first let
\begin{align*}
\mathcal{T}:=\{I:I=[T,\infty)\subset\mathbb{R},~\text{or }I=(-\infty,\infty)\},
\end{align*}
and for each $I\in\mathcal{T}$, let $\mathfrak{F}(I)$ denote the set of all $\mathcal{X}$-valued functions on $I$.
\begin{definition}\label{Devolution}
A map $\mathfrak{E}$ that associates to each $I\in\mathcal{T}$ a subset $\mathfrak{E}(I)\subset\mathfrak{F}(I)$ will be called an evolutionary system if the following conditions are satisfied:
\begin{enumerate}[1.]
\item $\mathfrak{E}([0,\infty))\neq\emptyset$.
\item $\mathfrak{E}(I+s)=\{u(\cdot):u(\cdot-s)\in\mathfrak{E}(I)\}$ for all $s\in\mathbb{R}$.
\item $\{u(\cdot)\mid_{I_{2}}:u(\cdot)\in\mathfrak{E}(I_{1})\}\subset\mathfrak{E}(I_{2})$ for all pairs $I_{1}$, $I_{2}\subset\mathcal{T}$, such that $I_{2}\subset I_{1}$.
\item $\mathfrak{E}((-\infty,\infty))=\{u(\cdot):u(\cdot)\mid_{[T,\infty)}
    \in\mathfrak{E}([T,\infty)),\forall T\in\mathbb{R}\}$.
\end{enumerate}
\end{definition}
We will refer to $\mathfrak{E}(I)$ as the set of all trajectories on the time interval $I$. Let $P(\mathcal{X})$ be the set of all subsets of $\mathcal{X}$. For every $t \geq 0$, define a map
$$
\begin{gathered}
R(t): P(\mathcal{X}) \rightarrow P(\mathcal{X}), \\
R(t) A:=\{u(t): u(0) \in A, u \in \mathfrak{E}([0, \infty))\}, \quad A \subset\mathcal{X}.
\end{gathered}
$$
\begin{definition}
A set $\mathscr{A}_{w} \subset \mathcal{X}$ is a d$_{w}$-global attractor of $\mathfrak{E}$ if $\mathscr{A}_{w}$ is a minimal set that is
\begin{enumerate}[1.]
\item  d$_{w}$-closed;
\item d$_{w}$-attracting: for any $B \subset\mathcal{X}$ and $\epsilon>0$, there exists $t_0$, such that
$$
R(t) B \subset B_{w}\left(\mathscr{A}_{w}, \epsilon\right):=\left\{u: \inf _{x \in \mathscr{A}_{w}} d_{w}(u, x)<\epsilon\right\}, \quad \forall t \geq t_0.
$$
\end{enumerate}
\end{definition}
\begin{definition}
The $\omega_{\bullet}$-limit set $(\bullet=s, w)$ of a set $A \subset \mathcal{X}$ is
$$
\omega_{\bullet}(A):=\bigcap_{T \geq 0} \overline{\bigcup_{t \geq T} R(t) A}^{\bullet}.
$$
\end{definition}
In order to extend the notion of invariance from a semigroup to an evolutionary system, we will need the following mapping:
$$
\widetilde{R}(t) A:=\{u(t): u(0) \in A, u \in \mathfrak{E}((-\infty, \infty))\}, \quad A \subset\mathcal{X},~t \in \mathbb{R} .
$$

\begin{definition}
A set $A \subset\mathcal{X}$ is positively invariant if
$$
\widetilde{R}(t) A \subset A, \quad \forall t \geq 0 .
$$
$A$ is invariant if
$$
\widetilde{R}(t) A=A, \quad \forall t \geq 0 .
$$
$A$ is quasi-invariant, if for every $a \in A$, there exists a complete trajectory $u \in$ $\mathfrak{E}((-\infty, \infty))$ with $u(0)=a$ and $u(t) \in A$ for all $t \in \mathbb{R}$.
\end{definition}
As shown in \cite{cheskidov,cheskidov5}, a semigroup $\{S(t)\}_{t\geq0}$ defines an evolutionary system. In order to investigate the existence and structure of $\mathscr{A}_{w}$, we use a new method initiated by Cheskidov and Lu in \cite{cheskidov5} by taking a closure of the evolutionary system $\mathfrak{E}$. Let
\begin{align*}
\bar{\mathfrak{E}}([\tau,\infty)):=\overline{\mathfrak{E}([\tau,\infty))}^{\mathcal{C}([\tau,\infty);\mathcal{X}_{w})},\quad\forall\tau\in\mathbb{R}.
\end{align*}
Obviously, $\bar{\mathfrak{E}}$ is also an evolutionary system. We call $\bar{\mathfrak{E}}$ the closure of the evolutionary system $\mathfrak{E}$, and add the top-script $^{-}$ to the corresponding notations. Below is an important property for $\mathfrak{E}$ in some cases.
\begin{itemize}
\item[$\diamondsuit$] \textbf{A1} $\mathfrak{E}([0,\infty))$ is pre-compact in $\mathcal{C}([0,\infty); \mathcal{X}_{w})$.
\end{itemize}
\begin{theorem}(\cite{cheskidov5})\label{ZFT-ES}
Assume $\mathfrak{E}$ is an evolutionary system. Then the weak global attractor $\mathscr{A}_{w}$ exists. Furthermore, assume that $\mathfrak{E}$ satisfies \textbf{A1}. Let $\bar{\mathfrak{E}}$ be the closure of $\mathfrak{E}$. Then
\begin{enumerate}[1.]
\item $\mathscr{A}_{w}=\omega_{w}(\mathcal{X})=\bar{\omega}_{w}(\mathcal{X})=\bar{\mathscr{A}}_{w}
=\{u_{0}\in\mathcal{X}:u_{0}=u(0)\text{~for~some~}u\in\bar{\mathfrak{E}}((-\infty,\infty))\}$.
\item $\mathscr{A}_{w}$ is the maximal invariant and maximal quasi-invariant set w.r.t. $\bar{\mathfrak{E}}$.
\item (Weak uniform tracking property) For any $\epsilon>0$, there exists $t_{0}$, such that for any $t^{*}>t_{0}$, every trajectory $u\in\mathfrak{E}([0,\infty))$ satisfies
    \begin{align*}
    d_{\mathcal{C}([t^{*},\infty);\mathcal{X}_{w})}(u,v)\leq\epsilon,
    \end{align*}
for some complete trajectory $v\in\bar{\mathfrak{E}}((-\infty,\infty))$.
\end{enumerate}
\end{theorem}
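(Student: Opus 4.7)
The plan is to follow the framework of Cheskidov--Lu, combining $d_w$-compactness of $\mathcal{X}$ with the translation invariance of $\mathfrak{E}$. To begin, for the existence of $\mathscr{A}_w$, I take the natural candidate $\mathscr{A}_w := \omega_w(\mathcal{X}) = \bigcap_{T \geq 0} \overline{\bigcup_{t \geq T} R(t)\mathcal{X}}^{\,w}$. Since $\mathcal{X}$ is $d_w$-compact, this nested intersection of nonempty $d_w$-closed sets is itself nonempty and $d_w$-closed. Weak attraction of any $B \subset \mathcal{X}$ then follows by a standard contradiction argument: if some $B$ failed to be attracted, there would exist $\epsilon > 0$, $t_n \to \infty$ and $x_n \in R(t_n)B$ with $d_w(x_n, \mathscr{A}_w) \geq \epsilon$; $d_w$-compactness of $\mathcal{X}$ yields a weak subsequential limit $x \in \omega_w(\mathcal{X}) = \mathscr{A}_w$, a contradiction. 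Minimality among $d_w$-closed weakly attracting sets is automatic from the omega-limit construction.

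For item 1, under A1, I establish the identification of $\mathscr{A}_w$ with the set of values $v(0)$ for $v \in \bar{\mathfrak{E}}((-\infty, \infty))$. Given $u_0 \in \mathscr{A}_w$, write $u_0$ as a $d_w$-limit of $u_n(t_n)$ with $u_n \in \mathfrak{E}([0, \infty))$ and $t_n \to \infty$, and set $v_n(\cdot) := u_n(\cdot + t_n) \in \mathfrak{E}([-t_n, \infty))$. For each fixed $T \geq 0$, once $t_n \geq T$ the restriction $v_n|_{[-T, \infty)}$ lies in $\mathfrak{E}([-T, \infty))$, which by shift invariance together with A1 is precompact in $\mathcal{C}([-T, \infty); \mathcal{X}_w)$. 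A diagonal extraction then produces a function $v$ such that, along a subsequence, $v_n \to v$ in every $\mathcal{C}([-T, \infty); \mathcal{X}_w)$, and property 4 of Definition \ref{Devolution} applied to $\bar{\mathfrak{E}}$ gives $v \in \bar{\mathfrak{E}}((-\infty, \infty))$ with $v(0) = u_0$. The reverse inclusion is straightforward: for such $v$, the values $v(-n) \in \mathcal{X}$ for every $n \in \mathbb{N}$ and $v(0)$ is reached by evolving from $v(-n)$ along $\bar{\mathfrak{E}}$, whence $v(0) \in \omega_w(\mathcal{X})$. The identifications $\mathscr{A}_w = \bar{\omega}_w(\mathcal{X}) = \bar{\mathscr{A}}_w$ then follow because $\mathfrak{E}$ and $\bar{\mathfrak{E}}$ share the same set of complete-trajectory endpoints at $t = 0$.

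Item 2 is an immediate consequence of item 1: the set of starting values of complete trajectories in $\bar{\mathfrak{E}}((-\infty, \infty))$ is tautologically quasi-invariant with respect to $\bar{\mathfrak{E}}$, invariance under shifting along $\bar{\mathfrak{E}}$ is obtained by translating complete trajectories, and any other invariant or quasi-invariant set for $\bar{\mathfrak{E}}$ must consist of points lying on complete trajectories in $\bar{\mathfrak{E}}$, hence is contained in $\mathscr{A}_w$ by item 1.

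For item 3 (weak uniform tracking), I argue by contradiction. If it failed, there would exist $\epsilon > 0$, $t_n \to \infty$, and $u_n \in \mathfrak{E}([0, \infty))$ such that $d_{\mathcal{C}([t_n, \infty); \mathcal{X}_w)}(u_n, v) > \epsilon$ for every $v \in \bar{\mathfrak{E}}((-\infty, \infty))$. Setting $w_n := u_n(\cdot + t_n)$ and applying A1 together with the same diagonal extraction, a subsequence of $w_n$ converges in each $\mathcal{C}([-T, \infty); \mathcal{X}_w)$ to a limit $w \in \bar{\mathfrak{E}}((-\infty, \infty))$; the shifted trajectory $\tilde{w} := w(\cdot - t_n)$ is again in $\bar{\mathfrak{E}}((-\infty, \infty))$ by shift invariance, and the definition \eqref{2.5} of the metric yields $d_{\mathcal{C}([t_n, \infty); \mathcal{X}_w)}(u_n, \tilde{w}) \to 0$, contradicting the standing assumption. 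The main obstacle I anticipate is verifying that the diagonal limits obtained above genuinely belong to $\bar{\mathfrak{E}}((-\infty, \infty))$ rather than to a merely abstract function space; this rests on a careful application of property 4 in Definition \ref{Devolution} combined with the observation that the closures $\bar{\mathfrak{E}}([\tau, \infty))$ themselves inherit the evolutionary-system axioms.
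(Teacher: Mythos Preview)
The paper does not supply its own proof of this theorem: it is stated as a known result from \cite{cheskidov5} (Cheskidov--Lu) and is used as a black box throughout Section~3. So there is no in-paper argument to compare against; your sketch is essentially a reconstruction of the original Cheskidov--Lu proof, and the overall strategy you outline (omega-limit as candidate, diagonal extraction under A1, contradiction for tracking) matches theirs.

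Two small points where your sketch is loose. In the reverse inclusion for item~1 you say that $v(0)$ is reached by evolving from $v(-n)$ ``along $\bar{\mathfrak{E}}$,'' which only gives $v(0)\in\bar\omega_w(\mathcal{X})$; to place $v(0)$ in $\omega_w(\mathcal{X})$ you must first approximate $v|_{[-n,\infty)}$ by trajectories in $\mathfrak{E}([-n,\infty))$ (available since $\bar{\mathfrak{E}}$ is the $\mathcal{C}([-n,\infty);\mathcal{X}_w)$-closure) and then pass to the $d_w$-limit at time $0$. In item~3 your notation $\tilde w:=w(\cdot-t_n)$ mixes a fixed limit $w$ with the running index $n$; what you mean is that, along the convergent subsequence, $u_n(\cdot+t_n)\to w$ in $\mathcal{C}([0,\infty);\mathcal{X}_w)$, so for those $n$ the complete trajectory $w(\cdot-t_n)\in\bar{\mathfrak{E}}((-\infty,\infty))$ tracks $u_n$ on $[t_n,\infty)$. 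Neither issue is a genuine gap.
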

\section{Weak attractors}
\subsection{Well-posedness and dissipativity}
\begin{definition}\label{ZFD}
A function $u(t)$ is a
\begin{itemize}
\item (W) weak solution of Eq. \eqref{1.1} iff $\xi_{u}(t):=(u(t),\partial_{t}u(t))\in L^{\infty}(0,T;\mathscr{E})$ and Eq. \eqref{1.1} is satisfied in the sense of distribution, i.e.
    \begin{align*}
    \nonumber-&\int_{0}^{T}\langle\partial_{t}u,\partial_{t}\phi\rangle dt+
    \mathcal{J}(\|\partial_{t}u(t)\|^{2})\int_{0}^{T}\langle\partial_{t}u,\phi\rangle dt+\int_{0}^{T}\langle\nabla u\cdot\nabla\phi,1\rangle dt
    \\&+\int_{0}^{T}\langle g(u),\phi\rangle dt
    =\int_{0}^{T}\langle h,\phi\rangle dt
    \end{align*}
    for any $\phi\in\mathcal{C}_{0}^{\infty}((0,T)\times\Omega)$.
\item (S--S) Shatah--Struwe solution of Eq. \eqref{1.1} on the interval $[0,T]$ iff $u(t)$ is a weak solution and
    \begin{align*}
    u\in L^{4}([0,T];L^{12}).
    \end{align*}
\item (S) strong solution of Eq. \eqref{1.1} on the interval $[0,T]$ iff
\begin{enumerate}[(i)]
\item $u\in W^{1,1}(r,s;\mathcal{H}^{1})$ and $\partial_{t}u\in W^{1,1}(r,s;L^{2})$ for any $0<r<s<T$;
\item $-\Delta u(t)+\mathcal{J}(\|\partial_{t}u(t)\|^{2})\partial_{t}u\in L^{2}$ for a.e. $t\in[0,T]$;
\item Eq. \eqref{1.1} is satisfied in $L^{2}$ for a.e. $t\in[0,T]$.
\end{enumerate}
\end{itemize}
\end{definition}
\begin{theorem}\label{ZFT1}(\cite{zhou})
Let $\mathcal{J}(\cdot)$, $g$ and $h$ satisfy Assumption \ref{A1.1}. For any initial condition $\xi_{u}(0)\in\mathscr{E}$, there exists a unique global S--S solution $u(t)$ of Eq. \eqref{1.1} satisfying the energy equality
\begin{align}\label{Zfeng3.1}
\mathcal{E}_{u}(T)+2\int_{0}^{T}\mathcal{J}(\|\partial_{t}u(t)\|^{2})\|\partial_{t}u(t)\|^{2}dt=\mathcal{E}_{u}(0),\quad \forall T\geq 0
\end{align}
and the following Strichartz estimate
\begin{align}\label{Z3.1}
\|u\|_{L^{4}(0,T;L^{12})}
\leq\mathscr{Q}_{T}(\xi_{u}(0),\|h\|^{2}),
\end{align}
where $\mathcal{E}_{u}(t)=\|\xi_{u}\|_{\mathscr{E}}^{2}+2\langle G(u),1\rangle-2\langle h,u\rangle$ and the function $\mathscr{Q}_{T}$ is increasing in $T$.  Furthermore, if $\xi_{u}(0)\in\mathscr{E}^{1}$, then the corresponding S--S solution is the strong solution of Eq. \eqref{1.1}.
\end{theorem}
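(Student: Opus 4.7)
The plan is to follow the classical Shatah--Struwe strategy adapted to the nonlocal damping. First I would set up a \emph{local} existence and uniqueness argument by a contraction mapping in the space
\[
X_T := \mathcal{C}([0,T];\mathscr{E}) \cap L^{4}([0,T]; L^{12}),
\]
equipped with the natural norm. Given $v\in X_T$, let $\Phi(v)=u$ solve the linear problem $\partial_t^2 u - \Delta u = f(v)$ with $f(v):=-\mathcal{J}(\|\partial_t v\|^2)\partial_t v - g(v) + h$ and the prescribed initial data. Applying Lemma 2.2 gives
\[
\|\Phi(v)\|_{L^4_tL^{12}_x} \le C\bigl(\|\xi_u(0)\|_{\mathscr E}+\|f(v)\|_{L^1_tL^2_x}\bigr),
\]
while the energy identity gives the $\mathcal{C}([0,T];\mathscr E)$ bound. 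Using (1.4) with $q\le 5$ the quintic term is controlled by
\[
\|g(v)\|_{L^1_tL^2_x} \le C\bigl(T\|v\|^{?}_{L^\infty_tL^6_x}+ \|v\|_{L^4_tL^{12}_x}^{4}\|v\|_{L^\infty_tL^6_x}\bigr),
\]
and since $\mathcal{J}\in\mathcal{C}^1$ with $\|\partial_t v\|\le C_0=C_0(\|\xi_u(0)\|_{\mathscr E})$, the damping contribution is bounded by $C_0 T^{1/2}\|\partial_t v\|_{L^\infty_tL^2_x}$. A short-time contraction then closes on a ball of radius $2C\|\xi_u(0)\|_{\mathscr E}+\cdots$ in $X_T$.

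Uniqueness is the main delicate point at critical growth. For two S--S solutions $u_1,u_2$ with the same data, write $w=u_1-u_2$. Testing the difference equation with $\partial_t w$ and handling the nonlocal damping by splitting
\[
\mathcal{J}(\|\partial_t u_1\|^2)\partial_t u_1-\mathcal{J}(\|\partial_t u_2\|^2)\partial_t u_2
= \mathcal{J}(\|\partial_t u_1\|^2)\partial_t w + \bigl(\mathcal{J}(\|\partial_t u_1\|^2)-\mathcal{J}(\|\partial_t u_2\|^2)\bigr)\partial_t u_2,
\]
the first piece is nonnegative while the second is bounded by $C\|\partial_t w\|\|\partial_t u_2\|$ using $\mathcal{J}\in\mathcal{C}^1$ and the uniform energy bound. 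For the nonlinearity, $(g(u_1)-g(u_2))\partial_t w$ is estimated by $C(1+|u_1|^4+|u_2|^4)|w||\partial_t w|$, and the $L^4_tL^{12}_x$ bound on $u_1,u_2$ together with Hölder's and Sobolev's inequalities produces a forcing in $L^1$ in time. Gronwall's inequality then yields $w\equiv0$. This is the step I expect to be the main obstacle, because the quintic exponent is exactly critical and the estimate must absorb the top-order term into the Strichartz norm rather than a plain $L^\infty_t$ norm.

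Once local well-posedness is established, globality follows from an a priori energy bound. Assumption~(GH) (namely $G(s)\ge \kappa_4|s|^{q+1}-\kappa_5$ and $g(s)s-4G(s)\ge -\kappa_3$) plus $h\in L^2$ imply that the functional $\mathcal{E}_u$ is equivalent to $\|\xi_u\|_{\mathscr E}^2$ up to lower-order terms and remains bounded by $\mathcal{E}_u(0)$ under the dissipation $2\mathcal{J}(\|\partial_t u\|^2)\|\partial_t u\|^2\ge 0$ forced by \textbf{(J)}. The local time of existence depends only on $\|\xi_u(0)\|_{\mathscr E}$, so standard continuation yields a global S--S solution. The energy equality \eqref{Zfeng3.1} is proved by approximating the initial data by $\mathscr{E}^1$-regular data (for which one may test rigorously by $\partial_t u$), establishing the identity for the approximations, and passing to the limit using the S--S Strichartz bound to justify $\int_0^T\langle g(u),\partial_t u\rangle\,dt\to\int_0^T\langle G(u),1\rangle'\,dt$ and the continuity of $\mathcal{J}$ for the damping term.

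The Strichartz bound \eqref{Z3.1} is obtained by iterating the short-time estimate on a partition $[0,T]=\bigcup [t_k,t_{k+1}]$ with step determined by the uniform energy bound, and each iteration contributes a factor depending on $\mathcal{E}_u(0)$ and $\|h\|$; the resulting constant is monotone in $T$. Finally, for $\xi_u(0)\in\mathscr{E}^1$, I would differentiate the equation in time, obtain an energy estimate for $(\partial_t u,\partial_t^2 u)$ in $\mathscr E$ using $g'(u)\ge -\kappa_1+\kappa_2|u|^{q-1}$ from (1.4) to absorb the critical term, deduce $\partial_t u\in L^\infty(0,T;\mathcal H^1)$ and $\partial_t^2 u\in L^\infty(0,T;L^2)$, and then read off from the equation that $-\Delta u+\mathcal{J}(\|\partial_t u\|^2)\partial_t u\in L^\infty(0,T;L^2)$, which gives the three properties in Definition~3.1~(S).
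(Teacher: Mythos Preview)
The paper does not give its own proof here; the result is imported from \cite{zhou} together with the quintic global well-posedness theory of \cite{blp,bp,bss}. Your outline is sound at the local level and for uniqueness, but there is a genuine gap in the continuation step when $q=5$.

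Your sentence ``the local time of existence depends only on $\|\xi_u(0)\|_{\mathscr E}$, so standard continuation yields a global S--S solution'' is exactly the statement that is \emph{not} available in the critical case on a bounded domain. In your fixed-point estimate the quintic difference produces a factor $\|v\|_{L^4_tL^{12}_x}^{4}\|w\|_{L^\infty_tL^6_x}$ with no accompanying power of $T$; hence the local lifespan is governed by how quickly the Strichartz norm of the free evolution of $\xi_u(0)$ accumulates, not by the energy alone. The a~priori bound $\mathcal{E}_u(t)\le\mathcal{E}_u(0)$ therefore does \emph{not} give a uniform lower bound on the lifespan, and your iteration does not rule out finite-time blowup of the $L^4_tL^{12}_x$ norm. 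The missing ingredient is the Morawetz--Pohozaev non-concentration argument on the domain: combined with the Strichartz estimates of \cite{bss}, it shows (as in \cite{blp,bp}) that near any putative blowup time the local $L^4L^{12}$ norm must be small, contradicting the blowup criterion for the contraction scheme. The nonlocal damping $\mathcal{J}(\|\partial_t u\|^2)\partial_t u$ is energy-subcritical and does not interfere with this step, but you must invoke it; the paper's introduction flags precisely these references for this reason.

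A smaller remark: the bound \eqref{Z3.1} allows $\mathscr{Q}_T$ to depend on the \emph{point} $\xi_u(0)$, not merely on $\|\xi_u(0)\|_{\mathscr E}$. This is strictly weaker than the ETS estimate \eqref{ETS1.5}, which the introduction explicitly says is open for general bounded domains in the quintic case. Once global existence of an S--S solution is known, \eqref{Z3.1} is immediate from $u\in L^4_{\mathrm{loc}}(\mathbb R_+;L^{12})$; your ``iterate the short-time estimate on a partition'' description of \eqref{Z3.1} is therefore circular until the globality gap above is closed.
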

From Theorem \ref{ZFT1}, we can define the operators
$S(t): \mathscr{E}\rightarrow\mathscr{E}$ by
\begin{align}\label{Z3.2}
S(t)\xi_{u}(0):=\xi_{u}(t)
,\quad\xi_{u}(0)\in\mathscr{E},\quad t\geq0,
\end{align}
where $\xi_{u}(t)$ is the unique S--S solution to Eq. \eqref{1.1}. Obviously, we can conclude that the family of operators
$\{S(t)\}_{t\geq0}$ defined by \eqref{Z3.2} is a semigroup. Moreover, we have
\begin{align}\label{Z3.3}
\|\xi_{u}(t)-\xi_{v}(t)\|_{\mathscr{E}}\leq
L\|\xi_{u}(0)-\xi_{v}(0)\|_{\mathscr{E}},
\end{align}
where $L=e^{C\int_{0}^{t}(1+\|u\|_{L^{12}}^{4}+\|v\|_{L^{12}}^{4})dr}$, and positive constant $C$ depends only on the coefficients in \eqref{1.4}.

\begin{theorem}\label{ZFT2}
Under Assumption \ref{A1.1} the semigroup $S(t)$ defined by \eqref{Z3.3} is ultimately dissipative. More precisely, there exists $R_{0}>0$ possessing the following property: for any bounded subset $B\subset\mathscr{E}$, there is a $T=T(B)$ such that
\begin{align}\label{Z3.4}
\|S(t)\xi_{u}(0)\|_{\mathscr{E}}\leq R_{0}
\end{align}
for all $\xi_{u}(0)\in B$ and $t\geq T$, where $R_{0}$ may be depend on $\|h\|$, $|\Omega|$, $\lambda_{1}$ (the first eigenvalue of $-\Delta$ in $\mathcal{H}^{1}$) and the other structural parameters of Eq. \eqref{1.1} appearing in Assumption \ref{A1.1}.
\end{theorem}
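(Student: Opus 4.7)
The plan is to exhibit a Lyapunov-type functional equivalent to the energy and to apply the new-type Gronwall inequality of \cite[Lemma 3.2]{zhou}. Define
\[
\Phi(t) := \mathcal{E}_u(t) + \varepsilon\langle \partial_t u(t),u(t)\rangle
\]
for a small parameter $\varepsilon>0$ to be chosen. Thanks to the coercivity \eqref{1.5} of $G$, the assumption $h\in L^2$ and the Poincar\'e inequality, one checks directly that, for $\varepsilon$ small enough, $\Phi(t)$ is equivalent to $\|\xi_u(t)\|_{\mathscr{E}}^{2} + 2\langle G(u),1\rangle$ modulo additive constants depending only on $\|h\|$ and $|\Omega|$. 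Hence ultimate boundedness of $\Phi$ implies ultimate boundedness of $\|\xi_u(t)\|_{\mathscr{E}}$.

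Next, multiplying Eq. \eqref{1.1} by $u$ and combining with the energy identity \eqref{Zfeng3.1}, I would derive
\[
\frac{d}{dt}\Phi(t) + \varepsilon\bigl(\|\nabla u\|^{2} + \langle g(u),u\rangle - \|\partial_t u\|^{2}\bigr) + 2\mathcal{J}(\|\partial_t u\|^{2})\|\partial_t u\|^{2} + \varepsilon\mathcal{J}(\|\partial_t u\|^{2})\langle \partial_t u,u\rangle = \varepsilon\langle h,u\rangle.
\]
Using the inequality $g(s)s \ge 4G(s) - \kappa_{3}$ from \eqref{1.5} to convert $\varepsilon\langle g(u),u\rangle$ into a multiple of $\varepsilon\langle G(u),1\rangle$, and applying Cauchy--Schwarz and Young's inequalities to the cross-term $\varepsilon\mathcal{J}(\|\partial_t u\|^{2})\langle \partial_t u,u\rangle$ so as to absorb part of it into the dissipation $2\mathcal{J}(\|\partial_t u\|^{2})\|\partial_t u\|^{2}$, one arrives at an inequality of the form
\[
\frac{d}{dt}\Phi(t) + \alpha\Phi(t) \le C_{h} + \Psi(t),
\]
where $\alpha>0$ depends only on $\varepsilon$, $\lambda_{1}$ and the $\kappa_{i}$'s, and $\Psi(t)$ is controlled by $\mathcal{J}(\|\partial_t u\|^{2})\|\partial_t u\|^{2}$, whose integral over any interval $[t,t+1]$ is bounded in terms of $\|\xi_u(0)\|_{\mathscr{E}}$ thanks to the energy identity \eqref{Zfeng3.1}.

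The main obstacle is the degeneracy of the nonlocal damping: in case (J1), $\mathcal{J}$ is only strictly positive but may be arbitrarily small for small velocities, so the standard Gronwall argument---which would require $\Psi$ to be absorbed pointwise into $\alpha\Phi$---fails. The role of the new-type Gronwall lemma in \cite[Lemma 3.2]{zhou} is precisely to combine the above differential inequality for $\Phi$ with an integral control on $\Psi$ inherited from the energy identity to yield ultimate boundedness uniformly in both cases (J1) and (J2). Applying this lemma produces an absorbing radius $R_{0}$ depending only on $\|h\|$, $|\Omega|$, $\lambda_{1}$ and the structural constants in Assumption \ref{A1.1}, which is exactly the statement of the theorem.
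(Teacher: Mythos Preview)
Your proposal is essentially the same approach as the paper's: the paper omits the proof entirely and refers to \cite[Theorem 3.3]{zhou}, which is precisely the Lyapunov-functional-plus-new-type-Gronwall argument you sketch. One small correction: the degeneracy obstacle you describe is really the issue in case \eqref{1.3} (where $\mathcal{J}(s)=s^{p}$ is allowed, so $\mathcal{J}(0)=0$), not in case \eqref{1.2}; and the unbounded growth of $\mathcal{J}$ (arbitrary $p$) is what complicates the cross-term $\varepsilon\mathcal{J}(\|\partial_t u\|^2)\langle\partial_t u,u\rangle$ in both cases---but your invocation of \cite[Lemma 3.2]{zhou} to handle these is exactly what the paper intends.
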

\begin{proof}
The proof of this fact follows the same arguments as those in \cite[Theorem 3.3]{zhou}, and therefore it is omitted here.
\end{proof}

\subsection{Properties of weak attractors}
According to Theorem \ref{ZFT2}, we may, without loss of generality, assume that the bounded absorbing set
\begin{align}\label{ZHOU3.5}
\mathds{B}_{0}:=\{\xi_{u}\in\mathscr{E}:\|\xi_{u}\|_{\mathscr{E}}\leq R_{0}\}
\end{align}
is positively invariant with respect to the S--S solution semigroup $\{S(t)\}_{t\geq0}$. We now define an evolutionary system (ES) on $\mathscr{E}$ by
\begin{align}\label{Z3.5}
\mathfrak{E}([0,\infty)):=\{\xi_{u}(\cdot):\xi_{u}(t)=S(t)\xi_{u}(0),
~\xi_{u}(t)\in\mathcal{X},~\forall t\geq0\},
\end{align}
where $\mathcal{X}:=\{\xi_{u}\in\mathscr{E}:\|\xi_{u}\|_{\mathscr{E}}\leq R_{0}\}$. Let
\begin{align}\label{Z3.6}
\bar{\mathfrak{E}}([0,\infty)):=\overline{\mathfrak{E}([0,\infty))}^{\mathcal{C}([0,\infty);\mathcal{X}_{w})},
\end{align}
where the metric on $\mathcal{C}([0,\infty);\mathcal{X}_{\bullet})$ is defined similarly to that in \eqref{2.5}.

\begin{lemma}\label{ZF-LW}
Assuming the conditions of Theorem \ref{ZFT2} are met, and given that $\xi_{u_{n}}=(u_{n},\partial_{t}u_{n})$ represents a sequence of S--S solutions to Eq. \eqref{1.1} with $\xi_{u_{n}}(t)\in\mathcal{X}$ for all $t\geq t_{0}$. Then
\begin{align}\label{Z3.7}
\xi_{u_{n}}\text{ is bounded in } L^{\infty}([t_{0},T];\mathscr{E}),\quad \partial_{t}\xi_{u_{n}}\text{ is bounded in } L^{\infty}([t_{0},T];\mathscr{E}^{-1}),~~\forall T>t_{0}.
\end{align}
Furthermore, there exists a subsequence $n_j$ such that $\xi_{u_{n_j}}$ converges to some $\xi_u$ in $\mathcal{C}\left(\left[t_0, T\right] ; \mathscr{E}_w\right)$, meaning that $(\xi_{u_{n_{j}}},\phi)\rightarrow(\xi_{u},\phi)$
uniformly on $[t_{0},T]$ as $n_{j}\rightarrow\infty$ for all $\phi\in\mathscr{E}$.
\end{lemma}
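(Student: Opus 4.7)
The plan is to establish the two $L^\infty$ bounds directly from the standing hypothesis and from Eq.~\eqref{1.1}, and then to extract the convergent subsequence via an Arzel\`a--Ascoli argument in the weak topology of $\mathscr{E}$, whose trace on bounded sets is metrizable because $\mathscr{E}$ is separable.

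The bound $\xi_{u_n}\in L^\infty([t_0,T];\mathscr{E})$ is immediate from the hypothesis $\xi_{u_n}(t)\in\mathcal{X}$, which gives $\|\xi_{u_n}(t)\|_{\mathscr{E}}\leq R_0$ uniformly in $t\geq t_0$. For $\partial_t\xi_{u_n}=(\partial_t u_n,\partial_t^2 u_n)\in L^\infty([t_0,T];\mathscr{E}^{-1})=L^\infty([t_0,T];L^2\times\mathcal{H}^{-1})$, the first component is already controlled, so the work reduces to bounding $\partial_t^2 u_n$ in $\mathcal{H}^{-1}$. Solving Eq.~\eqref{1.1} for $\partial_t^2 u_n$ and estimating term by term: $\Delta u_n$ is bounded in $\mathcal{H}^{-1}$ since $u_n$ is bounded in $\mathcal{H}^1$; the damping $\mathcal{J}(\|\partial_t u_n\|^2)\partial_t u_n$ is bounded even in $L^2$ because $\|\partial_t u_n\|^2\leq R_0^2$ and $\mathcal{J}\in\mathcal{C}^1$ forces $\mathcal{J}(\|\partial_t u_n\|^2)\leq \mathcal{J}(R_0^2)$; the forcing $h\in L^2\hookrightarrow\mathcal{H}^{-1}$; and for the critical nonlinearity the growth hypothesis \eqref{1.4} gives $|g(s)|\leq C(1+|s|^5)$, so the Sobolev embedding $\mathcal{H}^1\hookrightarrow L^6$ combined with $L^{6/5}\hookrightarrow\mathcal{H}^{-1}$ yields a uniform $\mathcal{H}^{-1}$ bound on $g(u_n)$. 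Summing delivers \eqref{Z3.7}.

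For the compactness statement, I would regard $\xi_{u_n}$ as a sequence in $\mathcal{C}([t_0,T];\mathcal{X}_w)$, a complete metric space on the bounded absorbing set $\mathcal{X}$. Uniform boundedness in the weak metric is clear from the $\mathscr{E}$-bound. The crux is equicontinuity: by density of $\mathscr{E}^1=\mathcal{H}^2\times\mathcal{H}^1$ in $\mathscr{E}$, together with $\|\xi_{u_n}(t)\|_{\mathscr{E}}\leq R_0$, a standard $\varepsilon/3$-argument reduces equicontinuity of $t\mapsto d_w(\xi_{u_n}(t),\xi_{u_n}(s))$ to equicontinuity of $t\mapsto (\xi_{u_n}(t),\phi)_{\mathscr{E}}$ for $\phi=(\phi_1,\phi_2)\in\mathscr{E}^1$. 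For such test functions one computes
\begin{equation*}
\frac{d}{dt}(\xi_{u_n}(t),\phi)_{\mathscr{E}}=\langle \partial_t u_n(t),A\phi_1\rangle+\langle \partial_t^2 u_n(t),\phi_2\rangle_{\mathcal{H}^{-1},\mathcal{H}^1},
\end{equation*}
and both terms are uniformly bounded by $C(\|\phi_1\|_{\mathcal{H}^2}+\|\phi_2\|_{\mathcal{H}^1})$ thanks to the $\mathscr{E}$- and $\mathscr{E}^{-1}$-bounds just proved. A diagonal extraction over a countable dense family in $\mathscr{E}^1$, followed by Arzel\`a--Ascoli in $\mathcal{C}([t_0,T];\mathcal{X}_w)$, yields the desired subsequence $\xi_{u_{n_j}}\to\xi_u$ in $\mathcal{C}([t_0,T];\mathscr{E}_w)$.

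The main technical obstacle is the treatment of the quintic term $g(u_n)$ when bounding $\partial_t^2 u_n$: the exponent $q=5$ is precisely borderline for $H^1\hookrightarrow L^6$, and any stronger growth would push $g(u_n)$ outside $\mathcal{H}^{-1}$. Everything else is a routine application of Theorem~\ref{ZFT2}, the separability of $\mathscr{E}$, and the density of $\mathscr{E}^1$ in $\mathscr{E}$.
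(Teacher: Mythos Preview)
Your argument is correct. The boundedness step matches the paper's reasoning almost exactly (you spell out the $L^{6/5}\hookrightarrow\mathcal{H}^{-1}$ estimate for $g(u_n)$, which the paper leaves implicit), but the compactness step follows a different, equally valid, route. You run Arzel\`a--Ascoli directly in $\mathcal{C}([t_0,T];\mathcal{X}_w)$: pointwise weak relative compactness from the uniform $\mathscr{E}$-bound, and equicontinuity from the uniform $\mathscr{E}^{-1}$-bound on $\partial_t\xi_{u_n}$ tested against $\phi\in\mathscr{E}^1$. The paper instead invokes Alaoglu to get weak-$\ast$ convergence in $L^\infty([t_0,T];\mathscr{E})$ and then an Aubin--Lions/Simon compact embedding
\[
L^\infty([t_0,T];\mathscr{E})\cap\{\partial_t^2 u\in L^\infty([t_0,T];\mathcal{H}^{-1})\}\Subset\mathcal{C}([t_0,T];\mathscr{E}^{-\varsigma}),
\]
upgrading the strong convergence in $\mathscr{E}^{-\varsigma}$ to convergence in $\mathscr{E}_w$ via the uniform $\mathscr{E}$-bound. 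Your approach is more self-contained (no external compactness lemma needed), while the paper's is shorter once that lemma is available; both exploit precisely the same two a priori bounds \eqref{Z3.7}.
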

\begin{proof}
Applying Theorem \ref{ZFT2} and noting that $\xi_{u_{n}}$ are S--S solutions of Eq. \eqref{1.1}, we express the second derivative $\partial_t^2 u(t)$  as indicated in Eq. \eqref{1.1}, leading to \eqref{Z3.7}. By invoking Alaoglu's compactness theorem, we extract a subsequence $\xi_{u_{n_{j}}}$ which weak$^{\ast}$--converges to some function $\xi_{u}\in L^{\infty}([t_{0},T];\mathscr{E})$, i.e.,
\begin{align}\label{Z3.8}
\xi_{u_{n_{j}}}\rightharpoonup\xi_{u} \text{ weakly--$\ast$ in }L^{\infty}([t_{0},T];\mathscr{E}).
\end{align}
Utilizing the compact embedding result:
\begin{align*}
&\{(u,\partial_{t}u)\in L^{\infty}([t_{0},T];\mathscr{E})\}\cap\{\partial_{t}^{2}u\in L^{\infty}([t_{0},T];\mathcal{H}^{-1})\}
\\&\Subset\{(u,\partial_{t}u)\in\mathcal{C}([t_{0},T];\mathscr{E}^{-\varsigma})\}
\end{align*}
for some $0<\varsigma\leq 1$, we deduce that the weak--$\ast$ convergence \eqref{Z3.8} implies the strong convergence
$\xi_{u_{n_{j}}}\rightarrow\xi_{u} \text{~in~} \mathcal{C}([t_{0},T];\mathscr{E}_{w})$. The proof is complete.
\end{proof}

\begin{theorem}\label{ZFT3}
Let Assumption \ref{A1.1} be in force. Then the weak global attractor $\mathscr{A}_{w}$ for ES $\mathfrak{E}$ as defined in \eqref{Z3.5} exists.
Furthermore, $\mathfrak{E}$ satisfies condition \textbf{A1}, and the weak global attractor is given by
$$
\mathscr{A}_{w}:=\{\xi_{u_{0}}:\xi_{u_{0}}=\xi_{u}(0)\text{~for~some~}\xi_{u}\in\bar{\mathfrak{E}}((-\infty,\infty))\}.
$$
Additionally, for every $\epsilon>0$, there exists a time $t_{0}:=t_{0}(\epsilon)$ such that for any $t^{*}>t_{0}$, every trajectory $\xi_{v}\in\mathfrak{E}([0,+\infty))$ satisfies $d_{\mathcal{C}([0,\infty):\mathcal{X}_{w})}(\xi_{v},\xi_{u})<\epsilon$ for some complete trajectory $\xi_{u}\in\bar{\mathfrak{E}}((-\infty,\infty))$.
\end{theorem}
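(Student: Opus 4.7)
The plan is to verify the hypotheses of Theorem \ref{ZFT-ES} (the Cheskidov--Lu abstract attractor theorem) for the concrete evolutionary system $\mathfrak{E}$ defined in \eqref{Z3.5}, and then read off all three conclusions simultaneously. Concretely, I need to check that $\mathfrak{E}$ satisfies Definition \ref{Devolution} and that the compactness condition \textbf{A1} holds on the phase space $\mathcal{X}$ equipped with a weak metric $d_w$ inherited from $\mathscr{E}$.

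First I would set up the metrics. Since $\mathcal{X}$ is bounded in the reflexive space $\mathscr{E}=H_0^1\times L^2$, the Banach--Alaoglu theorem makes it weakly compact, and separability of $\mathscr{E}^{\ast}$ makes this weak topology metrizable by some $d_w$; condition 2 linking $d_s$ and $d_w$ follows because the weak topology is coarser than the norm topology, so $d_s(u_n,v_n)\to 0$ forces $u_n-v_n\rightharpoonup 0$ and hence $d_w(u_n,v_n)\to 0$. Next I would verify the four axioms of an evolutionary system. Non-emptiness and time-translation invariance are immediate from the autonomous global well-posedness given by Theorem \ref{ZFT1}. The restriction axiom is built into the definition \eqref{Z3.5}. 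The gluing axiom relies on uniqueness of S--S solutions in Theorem \ref{ZFT1}: if $\xi_u|_{[T,\infty)}\in\mathfrak{E}([T,\infty))$ for every $T\in\mathbb{R}$, then on each such interval $\xi_u$ coincides with the unique semigroup orbit through its left endpoint, so the pieces glue consistently to a bona fide element of $\mathfrak{E}((-\infty,\infty))$. The positive invariance of the absorbing ball $\mathds{B}_0=\mathcal{X}$ noted in \eqref{ZHOU3.5} ensures that every such trajectory stays inside $\mathcal{X}$.

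The substantive step is verifying \textbf{A1}, i.e.\ that $\mathfrak{E}([0,\infty))$ is pre-compact in $\mathcal{C}([0,\infty);\mathcal{X}_w)$. Given any sequence $\xi_{u_n}\in\mathfrak{E}([0,\infty))$, each $\xi_{u_n}(t)$ lies in $\mathcal{X}$ for all $t\geq 0$, so Lemma \ref{ZF-LW} applies on each fixed interval $[0,K]$, $K\in\mathbb{N}$, furnishing a subsequence converging in $\mathcal{C}([0,K];\mathscr{E}_w)$. A standard Cantor diagonal extraction then produces a single subsequence that converges in $\mathcal{C}([0,K];\mathscr{E}_w)$ for every $K\in\mathbb{N}$. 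The weighted-sum structure of the metric in \eqref{2.5} means precisely that convergence in $\mathcal{C}([0,\infty);\mathcal{X}_w)$ is equivalent to uniform convergence on every $[0,K]$, so the diagonal subsequence converges in the required metric, yielding pre-compactness.

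With the axioms and \textbf{A1} established, Theorem \ref{ZFT-ES} immediately delivers existence of $\mathscr{A}_w$, the claimed representation $\mathscr{A}_w=\{\xi_{u_0}:\xi_{u_0}=\xi_u(0)\text{ for some }\xi_u\in\bar{\mathfrak{E}}((-\infty,\infty))\}$, and the weak uniform tracking property stated at the end of the theorem. The main obstacle is really hidden inside Lemma \ref{ZF-LW}: one must ensure that the weak-$\ast$ limit in $L^{\infty}([0,K];\mathscr{E})$ genuinely upgrades to a limit in $\mathcal{C}([0,K];\mathscr{E}_w)$, and this is where the compact embedding of $\{\xi_u\in L^{\infty}([0,K];\mathscr{E}):\partial_t^2 u\in L^{\infty}([0,K];\mathcal{H}^{-1})\}$ into $\mathcal{C}([0,K];\mathscr{E}^{-\varsigma})$ is essential. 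Everything downstream of that lemma is bookkeeping within the abstract Cheskidov--Lu framework.
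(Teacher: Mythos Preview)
Your proposal is correct and follows essentially the same route as the paper: verify that $\mathfrak{E}$ is an evolutionary system, establish \textbf{A1} by applying Lemma \ref{ZF-LW} on successive intervals $[0,K]$ followed by a diagonal extraction, and then invoke Theorem \ref{ZFT-ES} to obtain existence, the structural formula for $\mathscr{A}_w$, and the weak tracking property. Your write-up is in fact more detailed than the paper's own proof, which omits the verification of the metric setup and the evolutionary-system axioms and simply cites Theorem \ref{ZFT-ES} and Lemma \ref{ZF-LW}.
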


\begin{proof}
The existence of the attractor $\mathscr{A}_{w}$ can be established by using Theorem \ref{ZFT-ES} directly. Let $\xi_{u_{n}}$ be a sequence in $\mathfrak{E}([0,\infty))$. Using Lemma \ref{ZF-LW}, we can extract a subsequence (still denoted by $\xi_{u_{n}}$) that converges to some $\xi_{u}^{(1)}\in\mathcal{C}([0,1];\mathcal{X}_{w})$ as $n\rightarrow\infty$. Passing to a subsequence and still denoting $\xi_{u_{n}}$ once more, we obtain that $\xi_{u_{n}}\rightarrow\xi_{u}^{(2)}\in\mathcal{C}([0,2];\mathcal{X}_{w})$ as $n\rightarrow\infty$ for some $\xi_{u}^{(2)}\in\mathcal{C}([0,2];\mathcal{X}_{w})$ with $\xi_{u}^{(1)}=\xi_{u}^{(2)}$ on $[0,1]$. Continuing this diagonalization process, we get a subsequence $\xi_{u_{n_{j}}}$ converges to $\xi_{u}\in\mathcal{C}([0,\infty);\mathcal{X}_{w})$, and \textbf{A1} is proven. The other statements contained in the above theorem can be proved by applying Theorem \ref{ZFT-ES} again.
\end{proof}

\begin{theorem}\label{ZFT4}
Under the Assumption \ref{A1.1}, the complete trajectory $\xi_{u}\in\bar{\mathfrak{E}}((-\infty,\infty))$ if and only if there exists a sequence of times $t_{n}\rightarrow-\infty$ and a sequence of S--S solutions $\xi_{u_{n}}(t)$ of Eq. \eqref{1.1} given by:
\begin{equation}\label{Z3.9}
\begin{cases}
\partial_{t}^{2}u_{n}-\Delta u_{n}+\mathcal{J}(\|\partial_{t}u_{n}\|^{2})\partial_{t}u_{n}
+g(u_{n})=h(x),
\\\xi_{u_{n}}(t_{n})=\xi_{n}^{0}\in\mathcal{X},~t\geq t_{n},
\end{cases}
\end{equation}
such that $\xi_{u_{n}}\rightharpoonup\xi_{u}$ in $\mathcal{C}([-T,\infty);\mathcal{X}_{w})$ for any $T>0$.
\end{theorem}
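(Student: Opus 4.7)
The plan is to unpack the definition of $\bar{\mathfrak{E}}((-\infty,\infty))$ and apply a Cantor-style diagonal extraction. The key observation is that by item~4 of Definition~\ref{Devolution} transferred to the closure $\bar{\mathfrak{E}}$, we have $\xi_{u}\in\bar{\mathfrak{E}}((-\infty,\infty))$ if and only if $\xi_{u}|_{[\tau,\infty)}\in\bar{\mathfrak{E}}([\tau,\infty))=\overline{\mathfrak{E}([\tau,\infty))}^{\mathcal{C}([\tau,\infty);\mathcal{X}_{w})}$ for every $\tau\in\mathbb{R}$. Moreover, by the shift property (item~2 of Definition~\ref{Devolution}) together with the positive invariance of $\mathcal{X}$ under $S(t)$, each element of $\mathfrak{E}([\tau,\infty))$ is precisely an S--S solution of Eq.~\eqref{1.1} launched from some datum in $\mathcal{X}$ at time $\tau$. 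These two facts are the only ingredients required.

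For the sufficiency direction ($\Leftarrow$), assume the approximating sequence exists and fix $\tau\in\mathbb{R}$. For every $n$ large enough that $t_{n}\leq\tau$, the restriction property (item~3 of Definition~\ref{Devolution}) yields $\xi_{u_{n}}|_{[\tau,\infty)}\in\mathfrak{E}([\tau,\infty))$. Choosing any $T>0$ with $-T\leq\tau$, the hypothesis $\xi_{u_{n}}\rightharpoonup\xi_{u}$ in $\mathcal{C}([-T,\infty);\mathcal{X}_{w})$ descends to convergence in $\mathcal{C}([\tau,\infty);\mathcal{X}_{w})$, since the weighted metric \eqref{2.5} is equivalent to uniform weak convergence on every bounded subinterval. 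Hence $\xi_{u}|_{[\tau,\infty)}\in\bar{\mathfrak{E}}([\tau,\infty))$, and as $\tau$ was arbitrary, $\xi_{u}\in\bar{\mathfrak{E}}((-\infty,\infty))$.

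For the necessity direction ($\Rightarrow$), assume $\xi_{u}\in\bar{\mathfrak{E}}((-\infty,\infty))$. For each $k\in\mathbb{N}$ pick a sequence $\{\xi_{w_{m}^{(k)}}\}_{m\geq1}\subset\mathfrak{E}([-k,\infty))$ with $\xi_{w_{m}^{(k)}}\to\xi_{u}|_{[-k,\infty)}$ in $\mathcal{C}([-k,\infty);\mathcal{X}_{w})$ as $m\to\infty$; by the remark above, each $\xi_{w_{m}^{(k)}}$ is an S--S solution of Eq.~\eqref{Z3.9} on $[-k,\infty)$ with initial datum in $\mathcal{X}$ at time $-k$. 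A diagonal extraction then produces the claimed sequence: for each $k$ select $m_{k}$ large enough that
$$
d_{\mathcal{C}([-k,k];\mathcal{X}_{w})}\!\left(\xi_{w_{m_{k}}^{(k)}},\,\xi_{u}\right)<\frac{1}{k},
$$
and set $\xi_{u_{n}}:=\xi_{w_{m_{n}}^{(n)}}$ together with $t_{n}:=-n$, so that $t_{n}\to-\infty$. For any fixed $T>0$ and any window $[-T,-T+L]$, once $n\geq\max\{T,L\}$ we have $[-T,-T+L]\subset[-n,n]$, whence $d_{\mathcal{C}([-T,-T+L];\mathcal{X}_{w})}(\xi_{u_{n}},\xi_{u})\leq 1/n\to 0$; this uniform convergence on every bounded window is precisely convergence in the metric \eqref{2.5} on $\mathcal{C}([-T,\infty);\mathcal{X}_{w})$.

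The only mildly delicate point is the diagonal construction, which must yield a single sequence converging simultaneously for every $T>0$. Anchoring the approximation on the symmetric window $[-k,k]$ handles this cleanly, because each fixed bounded interval is eventually absorbed by $[-k,k]$. The rest of the argument is a direct transcription of the definitions, parallel in spirit to the diagonal extraction already performed in the proof of Theorem~\ref{ZFT3}.
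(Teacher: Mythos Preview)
Your proof is correct and follows essentially the same approach as the paper's: both directions rely on the characterization $\xi_{u}\in\bar{\mathfrak{E}}((-\infty,\infty))\Leftrightarrow\xi_{u}|_{[\tau,\infty)}\in\bar{\mathfrak{E}}([\tau,\infty))$ for all $\tau$, with the necessity direction handled by a diagonal extraction over the approximating sequences in $\mathfrak{E}([-k,\infty))$. Your version is in fact slightly more explicit than the paper's, which simply invokes a ``standard diagonalization argument'' without specifying the choice of index $m_{k}$ or the anchoring on the symmetric window $[-k,k]$.
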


\begin{proof}
Let $\xi_{u}\in\bar{\mathfrak{E}}((-\infty,\infty))$, and denote $\xi_{u_{n}}=\xi_{u}|_{[t_{n},\infty)}\in\bar{\mathfrak{E}}([t_{n},\infty))$, where $t_{n}\rightarrow-\infty$ as $n\rightarrow\infty$. Clearly, $\xi_{u_{n}}\rightharpoonup\xi_{u}$ in $\mathcal{C}([-T,\infty);\mathcal{X}_{w})$, $\forall T>0$. Since $\xi_{u_{n}}\in\bar{\mathfrak{E}}([t_{n},\infty))$, there exists a sequence $\{\xi_{u_{n}}^{(k)}\}_{k=1}^{\infty}\in\mathfrak{E}([t_{n},\infty))$ such that $\xi_{u_{n}}^{(k)}\rightharpoonup\xi_{u_{n}}$ in $\mathcal{C}([t_{n},\infty);\mathcal{X}_{w})$ as $k\rightarrow\infty$. Applying a standard diagonalization argument, we obtain that there exist a sequence $\xi_{u_{n}}^{(n)}\in\mathfrak{E}([t_{n},\infty))$ (still denoted by $\xi_{u_{n}}$) such that $\xi_{u_{n}}\rightharpoonup\xi_{u}$ in $\mathcal{C}([-T,\infty);\mathcal{X}_{w})$ for any $T>0$. By the definition of $\mathfrak{E}$, we know that $\xi_{u_{n}}$ is the S--S solution of Eq. \eqref{1.1}.

Conversely, let $\xi_{u_{n}}\in\mathfrak{E}([t_{n},\infty))$ and $\xi_{u_{n}}\rightharpoonup\xi_{u}$ in $\mathcal{C}([-T,\infty);\mathcal{X}_{w})$, $\forall T>0$. Consequently, $\{\xi_{u_{n}}|_{[-T,\infty)}:\xi_{u_{n}}\in\mathfrak{E}([t_{n},\infty)\}\subset\mathfrak{E}([-T,\infty))$ converges to $\xi_{u}|_{[-T,\infty)}\in\mathcal{C}([-T,\infty);\mathcal{X}_{w})$. Thus $\xi_{u}\in\bar{\mathfrak{E}}([-T,\infty))$ for any $T>0$. By definition, this implies $\xi_{u}\in\bar{\mathfrak{E}}((-\infty,\infty))$.
\end{proof}

Note that each S--S solution $\xi_{u_{n}}$ can be obtained as a limit of Galerkin approximations (see \cite{ksz,sav,zhou} for more details).
Consequently, the following results can be established, which can be proved using a standard diagonalization argument similar to that in Theorem \ref{ZFT4}.
\begin{corollary}\label{ZFC1}
Assume that the hypotheses of Theorem \ref{ZFT4} are satisfied. For any $\xi_{u}\in\bar{\mathfrak{E}}((-\infty,\infty))$, there exists a sequence $\xi_{u_{k}}^{(k)}$ such that $\xi_{u_{k}}^{(k)}\rightharpoonup\xi_{u}$ in $\mathcal{C}([-T,\infty);\mathcal{X}_{w})$ for any $T>0$. The sequence $u_{k}^{(k)}=\sum_{l=1}^{k}d_{l}^{k}(t)e_{l}$ satisfies the following equation:
\begin{equation}\label{Z3.10}
\begin{cases}
\partial_{t}^{2}u_{k}^{(k)}-\Delta u_{k}^{(k)}+\mathcal{J}\left(\|\partial_{t}u_{k}^{(k)}\|^{2}\right)\partial_{t}u_{k}^{(k)}
+\mathrm{P}_{k}g\left(u_{k}^{(k)}\right)=
\mathrm{P}_{k}h(x),
\\\xi_{u_{k}}^{(k)}(t_{k})=\xi_{k}^{(0)}\in\mathcal{X},~t\geq t_{k},
\end{cases}
\end{equation}
where $t_{k}\rightarrow-\infty$ as $k\rightarrow\infty$. 
Here, $\{e_{k}\}_{i=1}^{\infty}$ denotes an orthonormal system of eigenvectors of the Laplacian $-\Delta$ with Dirichlet boundary conditions, and $\mathrm{P}_{k}$ is the projector from $L^{2}$ onto $E_{k}:=span\{e_{1},e_{2},\cdots,e_{k}\}$.
\end{corollary}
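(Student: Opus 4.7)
The plan is to combine Theorem \ref{ZFT4} with the Galerkin construction of S--S solutions recalled in the remark just before this corollary, and then to extract a Cantor diagonal subsequence.

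First, by Theorem \ref{ZFT4} applied to $\xi_u \in \bar{\mathfrak{E}}((-\infty,\infty))$, I obtain times $t_n \to -\infty$ and S--S solutions $\xi_{u_n}$ of Eq. \eqref{1.1} on $[t_n,\infty)$, with $\xi_{u_n}(t_n) \in \mathcal{X}$, such that $\xi_{u_n} \rightharpoonup \xi_u$ in $\mathcal{C}([-T,\infty);\mathcal{X}_w)$ for every $T > 0$. Next, invoking the fact that each S--S solution is itself a Galerkin limit, for each fixed $n$ I obtain a further sequence $\{\xi_{u_n^{(k)}}\}_{k \geq 1}$ of Galerkin approximations on $[t_n,\infty)$, where $u_n^{(k)}(t) = \sum_{l=1}^{k} d_l^{n,k}(t) e_l$ solves the finite-dimensional Galerkin counterpart of \eqref{Z3.10} with initial datum $\mathrm{P}_k \xi_{u_n}(t_n) \in \mathcal{X}$, and $\xi_{u_n^{(k)}} \rightharpoonup \xi_{u_n}$ in $\mathcal{C}([t_n,\infty);\mathcal{X}_w)$ as $k \to \infty$.

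Second, I carry out a Cantor diagonal extraction. The weak-$\mathcal{C}$ topology on $[-T,\infty)$ is metrizable via the weighted sum \eqref{2.5}, and convergence in $\mathcal{C}([-T,\infty);\mathcal{X}_w)$ for every $T > 0$ is equivalent to weak uniform convergence on every compact subinterval of $\mathbb{R}$. Fixing the exhausting sequence $T_j = j \in \mathbb{N}$, for each $j$ I first choose $n_j$ with $t_{n_j} \leq -T_j$ and $d_{\mathcal{C}([-T_j,\infty);\mathcal{X}_w)}(\xi_{u_{n_j}},\xi_u) < 1/(2j)$, then select $k_j \geq j$ so that $d_{\mathcal{C}([-T_j,\infty);\mathcal{X}_w)}(\xi_{u_{n_j}^{(k_j)}}, \xi_{u_{n_j}}) < 1/(2j)$. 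Reindexing the diagonal sequence as $\xi_{u_k}^{(k)}$ with $t_k := t_{n_{j(k)}}$ produces Galerkin solutions of \eqref{Z3.10} with $t_k \to -\infty$, $\xi_{u_k}^{(k)}(t_k) \in \mathcal{X}$, and $\xi_{u_k}^{(k)} \rightharpoonup \xi_u$ in $\mathcal{C}([-T,\infty);\mathcal{X}_w)$ for every $T > 0$, as desired.

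No genuine obstacle arises beyond the diagonal bookkeeping, which is entirely standard given the metrizability of the target topology. All substantive content is already supplied by Theorem \ref{ZFT4} together with the cited Galerkin construction of S--S solutions, so the argument is in essence the same diagonalization used to prove Theorem \ref{ZFT4}, consistent with the author's indication that it is a ``standard diagonalization argument''.
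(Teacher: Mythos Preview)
Your proposal is correct and follows essentially the same route the paper indicates: invoke Theorem \ref{ZFT4} to obtain S--S solutions $\xi_{u_n}\rightharpoonup\xi_u$, approximate each $\xi_{u_n}$ by its Galerkin sequence as recalled in the remark preceding the corollary, and then diagonalize---exactly the ``standard diagonalization argument similar to that in Theorem \ref{ZFT4}'' that the paper cites in lieu of a proof. The only cosmetic point is that, to match the corollary's notation $u_k^{(k)}=\sum_{l=1}^k d_l^k(t)e_l$ literally, you should arrange the Galerkin levels $k_j$ to be strictly increasing and then relabel so that the $k$-th term of the final sequence lives in $E_k$; this is harmless bookkeeping.
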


\begin{proposition}\label{ZFP2}
Assume that Assumption \ref{A1.1} is satisfied. Then, for any $\xi_{u}\in\bar{\mathfrak{E}}((-\infty,\infty))$, we have
\begin{align}\label{Z3.11}
\int_{-\infty}^{\infty}\|\partial_{t}u(r)\|^{2p+2}dr\leq\mathcal{Q}(\|h\|^{2}),\quad
\partial_{t}u\in\mathcal{C}_{b}(\mathbb{R},\mathcal{H}^{-\varsigma})\text{ and } \lim\limits_{t\rightarrow\pm\infty}\|\partial_{t}u(t)\|_{\mathcal{H}^{-\varsigma}}=0
\end{align}
for any $0<\varsigma\leq1$, where $\mathcal{Q}(\cdot)$ is a monotone increasing function.
\end{proposition}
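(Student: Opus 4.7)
The strategy is to transfer the dissipation information from the approximating sequence provided by Theorem \ref{ZFT4} onto the limiting complete trajectory $\xi_u$, and then to combine the resulting time-integrability with uniform H\"older continuity in $\mathcal{H}^{-\varsigma}$ to obtain both the continuity and the decay at $\pm\infty$.

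By Theorem \ref{ZFT4} pick S--S solutions $\xi_{u_n}$ of Eq. \eqref{1.1} on $[t_n,\infty)$ with $t_n\to-\infty$, $\xi_{u_n}(t_n)\in\mathcal{X}$, and $\xi_{u_n}\rightharpoonup\xi_u$ in $\mathcal{C}([-T,\infty);\mathcal{X}_w)$ for every $T>0$. Working under condition \eqref{1.3} of Assumption \ref{A1.1}, so that $\mathcal{J}(s)s\geq s^{p+1}$, I would apply the energy equality \eqref{Zfeng3.1} to each $u_n$ on $[T_1,T_2]\subset[t_n,\infty)$ to obtain
\begin{align*}
\int_{T_1}^{T_2}\|\partial_t u_n(s)\|^{2p+2}\,ds \leq \tfrac{1}{2}\bigl(\mathcal{E}_{u_n}(T_1)-\mathcal{E}_{u_n}(T_2)\bigr).
\end{align*}
Since $\xi_{u_n}(s)\in\mathcal{X}$ for all $s\geq t_n$ and the growth bounds \eqref{1.4}--\eqref{1.5} together with Theorem \ref{ZFT2} give $|\mathcal{E}_{u_n}(s)|\leq\mathcal{Q}(\|h\|^{2})$ uniformly, the right-hand side is bounded by $\mathcal{Q}(\|h\|^{2})$ independently of $n,T_1,T_2$. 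Since $\xi_{u_n}(s)\rightharpoonup\xi_u(s)$ weakly in $\mathscr{E}$ for each fixed $s$, weak lower semicontinuity of the $L^2$-norm yields $\|\partial_t u(s)\|^{2p+2}\leq\liminf_n\|\partial_t u_n(s)\|^{2p+2}$; Fatou's lemma followed by monotone convergence in $T_1\to-\infty$ and $T_2\to\infty$ delivers the integral estimate.

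For the continuity statement, Lemma \ref{ZF-LW} already gives $\xi_u\in L^\infty(\mathbb{R};\mathscr{E})$ and $\partial_t\xi_u\in L^\infty(\mathbb{R};\mathscr{E}^{-1})$, hence $\partial_t u\in L^\infty(\mathbb{R};L^2)$ and $\partial_t^2 u\in L^\infty(\mathbb{R};\mathcal{H}^{-1})$. Thus $\partial_t u$ is Lipschitz into $\mathcal{H}^{-1}$, and interpolating $\mathcal{H}^{-\varsigma}$ between $L^2$ and $\mathcal{H}^{-1}$ with the uniform $L^2$ bound gives $\|\partial_t u(t)-\partial_t u(s)\|_{\mathcal{H}^{-\varsigma}}\leq C|t-s|^{\varsigma}$ for every $0<\varsigma\leq 1$, so $\partial_t u\in\mathcal{C}_b(\mathbb{R};\mathcal{H}^{-\varsigma})$. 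For the limit at $+\infty$, given $\epsilon>0$ I would fix $\tau>0$ small enough that $C\tau^{\varsigma}<\epsilon/2$. The integrability just proved implies $\int_T^\infty\|\partial_t u(s)\|^{2p+2}\,ds\to 0$ as $T\to\infty$, so for $t$ sufficiently large the window $[t-\tau,t]$ contains some $s_{\ast}$ with $\|\partial_t u(s_{\ast})\|_{L^2}$ (and therefore $\|\partial_t u(s_{\ast})\|_{\mathcal{H}^{-\varsigma}}$) smaller than $\epsilon/2$; the triangle inequality combined with the H\"older estimate yields $\|\partial_t u(t)\|_{\mathcal{H}^{-\varsigma}}<\epsilon$. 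The case $t\to-\infty$ is symmetric.

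The main obstacle is the very first step: because the damping is nonlocal and nonlinear, there is no reason for $\mathcal{J}(\|\partial_t u_n\|^2)$ to converge to $\mathcal{J}(\|\partial_t u\|^2)$, so one cannot pass to the limit in the full energy identity. The essential observation that unlocks the approach is that only the \emph{lower} bound $\int\|\partial_t u\|^{2p+2}\leq\liminf_n\int\|\partial_t u_n\|^{2p+2}$ is needed, and this requires nothing more than pointwise weak convergence in $L^2$ plus Fatou's lemma, entirely bypassing the identification of the nonlinear damping term in the weak limit.
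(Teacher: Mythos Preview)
Your argument is correct. For the dissipation integral you proceed essentially as the paper does: exploit the energy identity \eqref{Zfeng3.1} on each approximant $u_n$ together with the uniform bound $\xi_{u_n}(s)\in\mathcal{X}$, then pass to the limit via pointwise weak convergence and Fatou. (The paper inserts an intermediate dissipative estimate \eqref{Z3.12} obtained with the multiplier $\partial_t u_n+\varepsilon u_n$, but since $\xi_{u_n}(s)\in\mathcal{X}$ is already known this step is not needed, and your shortcut is legitimate.)

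Where your route genuinely diverges is in the decay $\|\partial_t u(t)\|_{\mathcal{H}^{-\varsigma}}\to 0$. The paper argues by compactness of translates: it takes $\xi_{u}(\cdot+t_n)$, extracts a weak limit $\xi_{\bar u}$, shows $\partial_t\bar u\equiv 0$ from the finiteness of $\int_{\mathbb{R}}\|\partial_t u\|^{2p+2}$, and upgrades to strong convergence in $\mathcal{C}_{loc}(\mathbb{R};\mathscr{E}^{-\varsigma})$ via the Aubin--Lions type embedding \eqref{Z3.18}. You instead combine the uniform H\"older estimate $\|\partial_t u(t)-\partial_t u(s)\|_{\mathcal{H}^{-\varsigma}}\leq C|t-s|^{\varsigma}$ (from interpolating the $L^2$ and $\mathcal{H}^{-1}$ bounds inherited through Lemma~\ref{ZF-LW}) with the tail smallness of $\int\|\partial_t u\|^{2p+2}$ to produce, in every short window near $\pm\infty$, a point where $\|\partial_t u\|$ is small, and conclude by the triangle inequality. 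Your argument is more elementary and quantitative, avoiding subsequences and compact embeddings; the paper's translate-and-limit argument, on the other hand, is the template that gets reused later (e.g.\ in the backward-regularity analysis of Theorem~\ref{ZFT5}), so it fits more naturally into the paper's overall machinery.
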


\begin{proof}
We may assume without loss of generality that $\mathcal{J}(\cdot)$ satisfies the condition given in \eqref{1.3}. Let $\xi_{u}\in\bar{\mathfrak{E}}((-\infty,\infty))$. By applying Theorem \ref{ZFT4}, we obtain the existence of a sequence of times $t_{n}\rightarrow-\infty$ as $n\rightarrow\infty$ and a sequence of S--S solutions $\xi_{u_{n}}(t)$ of Eq. \eqref{Z3.9} such that $\xi_{u_{n}}\rightharpoonup\xi_{u}$ in $\mathcal{C}([-T,\infty);\mathcal{X}_{w})$ for any $T>0$. By taking the multiplier $\partial_{t}u_{n}+\varepsilon u_{n}$ in \eqref{Z3.9}, we find after some computations that
\begin{align}\label{Z3.12}
\|\xi_{u_{n}}(t)\|_{\mathscr{E}}^{2}+\|u_{n}(t)\|_{L^{q+1}}^{q+1}
\leq e^{-\frac{\varepsilon}{2}(t-s)}\mathcal{Q}(\|\xi_{u_{n}}(s)\|_{\mathscr{E}})+\mathcal{Q}(\|h\|),\quad \forall t\geq s,
\end{align}
where the monotone function $\mathcal{Q}(\cdot)$ and the positive constant $\varepsilon$ are independent of $t$, $s$ and $\xi_{u_{n}}$. Multiplying \eqref{Z3.9} by $\partial_{t}u_{n}$ and integrating, we derive
\begin{align}\label{Z3.13}
2\int_{t}^{\infty}\|\partial_{t}u_{n}(r)\|^{2p+2}dr
\leq\|\xi_{u_{n}}(t)\|_{\mathscr{E}}^{2}+2\langle G(u_{n}(t)),1\rangle+2\langle h,u_{n}(t)\rangle.
\end{align}
Substituting \eqref{Z3.12} into \eqref{Z3.13}, and letting $t=\frac{t_{n}}{2}$ and $s=t_{n}$, we obtain
\begin{align}\label{Z3.14}
2\int_{\frac{t_{n}}{2}}^{\infty}\|\partial_{t}u_{n}(r)\|^{2p+2}dr
\leq e^{\frac{\varepsilon}{2}t_{n}}\mathcal{Q}(\|\xi_{u_{n}}(t_{n})\|_{\mathscr{E}})+\mathcal{Q}(\|h\|).
\end{align}
Recalling that $\xi_{u_{n}}(t_{n})=\xi_{n}^{0}\in\mathcal{X}$, and taking the limit as $n\rightarrow \infty$, we find
\begin{align}\label{Z3.15}
\int_{-\infty}^{\infty}\|\partial_{t}u(r)\|^{2p+2}dr\leq\mathcal{Q}(\|h\|).
\end{align}

To establish convergence for the sequence in \eqref{Z3.11}, consider the sequence
\begin{align*}
\{\xi_{u_{n}}(\cdot)\}_{n=1}^{\infty}
=\{\xi_{u}(\cdot+t_{n})\}_{n=1}^{\infty}\subset
\bar{\mathfrak{E}}((-\infty,\infty)),
\end{align*}
where $t_{n}$ is a sequence tending to $-\infty/\infty$. By Theorem \ref{ZFT2}, $\bar{\mathfrak{E}}((-\infty,\infty))$ is bounded in $\mathcal{C}_{b}(\mathbb{R};\mathscr{E})\cap\mathcal{C}_{b}^{1}(\mathbb{R};\mathscr{E}^{-1})$. Hence, there exists a weakly convergent subsequence (still denoted by $\xi_{u_{n}}$) such that
\begin{align*}
\xi_{u_{n}}\rightharpoonup \xi_{\bar{u}}\quad\text{weakly~in~}L^{2}([T,T+1];\mathscr{E})
\end{align*}
for any $T\in\mathbb{R}$. The weak lower semi-continuity of the norm implies that
\begin{align}\label{Z3.16}
\int_{T}^{T+1}\|\partial_{t}\bar{u}(t)\|^{2p+2}dt
\leq
\liminf\limits_{n\rightarrow\infty}\int_{T}^{T+1}\|\partial_{t}u(t+t_{n})\|^{2p+2}dt.
\end{align}
On the other hand, from the dissipation integral in \eqref{Z3.15}, we have
\begin{align}\label{Z3.17}
\int_{T}^{T+1}\|\partial_{t}u(t+t_{n})\|^{2p+2}dt
=\int_{t_{n}+T}^{t_{n}+T+1}\|\partial_{t}u(t)\|^{2p+2}dt
\rightarrow0,\quad\text{as}~t_{n}\rightarrow-\infty.
\end{align}
Combining \eqref{Z3.16} and \eqref{Z3.17}, we obtain
\begin{align*}
\int_{T}^{T+1}\|\partial_{t}\bar{u}(t)\|^{2p+2}dt=0.
\end{align*}
Thus, $\partial_{t}\bar{u}\equiv0$ on arbitrary $[T,T+1]$. Applying the compact embedding
\begin{align}\label{Z3.18}
\mathcal{C}_{b}(\mathbb{R};\mathscr{E})\cap\mathcal{C}_{b}^{1}(\mathbb{R};\mathscr{E}^{-1})
\Subset\mathcal{C}_{loc}(\mathbb{R};\mathscr{E}^{-\varsigma})\qquad\text{for~any~}0<\varsigma\leq1,
\end{align}
we conclude that
\begin{align}\label{Z3.19}
\partial_{t}u_{n}=\partial_{t}u(t+t_{n})\rightarrow0~\text{in~}\mathcal{C}([T,T+1];\mathcal{H}^{-\varsigma}))~\text{for~any~}T\in\mathbb{R}.
\end{align}
Utilizing \eqref{Z3.15} and \eqref{Z3.19}, we finally conclude
$\lim\limits_{t\rightarrow-\infty}\|\partial_{t}u(t)\|_{\mathcal{H}^{-\varsigma}}=0$.
\end{proof}

\section{Strong attractors}
For the reader's convenience, we briefly review the definition of a (strong) global attractor, see \cite{temam,chueshov1} for more details.
\begin{definition}\label{DS4.1}
Let $S(t)$ be a semigroup acting on a Banach space $\mathcal{Y}$. A set $\mathscr{A}_{s} \subset\mathcal{Y}$ is a (strong) global attractor of $S(t)$ if (a) $\mathscr{A}_{s}$ is compact in $\mathcal{Y}$; (b) $\mathscr{A}_{s}$ is strictly invariant: $S(t) \mathscr{A}_{s}=\mathscr{A}_{s}$; (c) It is an attracting set for the semigroup $S(t)$, i.e., for any bounded set $B \subset\mathcal{Y}$,
$dist_{\mathcal{Y}}(S(t)B,\mathscr{A}_{s}):=\sup\limits_{x\in B}\inf\limits_{y\in\mathscr{A}_{s}}\| S(t)x-y\|_{\mathcal{Y}}\rightarrow0$, as $t\rightarrow\infty$.
\end{definition}
According to the abstract attractor existence theorem in \cite{chueshov1}, the existence of the global attractor can be guaranteed provided the semigroup $S(t)$ is continuous, dissipative and asymptotically compact in $\mathcal{Y}$. While the continuity and dissipativity of $S(t)$ have been established in Section 3, it is necessary to verify its asymptotic compactness.
\subsection{Backward asymptotic regularity}
\begin{theorem}\label{ZFT5}
In addition to the Assumption \ref{A1.1}, suppose that  $J_{0}:=\mathcal{J}(0)>0$. Then, for every complete trajectory $\xi_{u}\in\bar{\mathfrak{E}}((-\infty,\infty))$, there exists a time $T=T(u)$ such that $\xi_{u}\in\mathcal{C}_{b}((-\infty,T];\mathscr{E}^{1})$ and $\|\xi_{u}\|_{\mathcal{C}_{b}((-\infty,T];\mathscr{E}^{1})}\leq\mathcal{Q}(\|h\|^{2})$.
\end{theorem}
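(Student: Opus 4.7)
The plan rests on the observation that the assumption $J_0=\mathcal{J}(0)>0$, combined with the backward decay of the velocity from Proposition~\ref{ZFP2}, places any complete trajectory in an effectively linearly damped regime as $t\to-\infty$. In this regime I expect enough dissipation to carry a higher-order energy estimate, even though the semigroup $S(t)$ has no global $\mathscr{E}\to\mathscr{E}^1$ smoothing property. The argument would proceed by first approximating, then deriving a higher-order energy inequality, and finally anchoring it by controlling the starting value in the far past.

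First I would approximate $\xi_u$ by the Galerkin trajectories $\xi_{u_k}^{(k)}$ supplied by Corollary~\ref{ZFC1}, which converge weakly to $\xi_u$ in $\mathcal{C}([-T,\infty);\mathcal{X}_w)$. Since $u_k^{(k)}\in E_k$ is smooth in space, it is legitimate to differentiate \eqref{Z3.10} in time. Setting $w_k=\partial_t u_k^{(k)}$ produces a damped-wave equation for $w_k$ whose damping coefficient is still $\mathcal{J}(\|w_k\|^2)\geq J_0>0$, and the term $2\mathcal{J}'(\|w_k\|^2)\langle w_k,\partial_t w_k\rangle w_k$ contributes nonnegatively (by monotonicity of $\mathcal{J}$) when tested against $\partial_t w_k$. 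Testing with $\partial_t w_k+\varepsilon w_k$ for small $\varepsilon>0$ I then construct the higher-order energy
\begin{align*}
\Phi_k(t):=\|\partial_t w_k(t)\|^2+\|\nabla w_k(t)\|^2,
\end{align*}
and control the nonlinear forcing $\langle\mathrm{P}_k g'(u_k^{(k)})w_k,\partial_t w_k\rangle$ by Hölder and Sobolev: using $|g'(s)|\leq C(1+|s|^{q-1})$ together with $u_k^{(k)}\in L^4_{\mathrm{loc}}(L^{12})$ from \eqref{Z3.1}, this forcing is bounded by $C(1+\|u_k^{(k)}\|_{L^{12}}^{4})(\Phi_k+1)$, a prefactor that is uniformly $L^1$ in time on bounded intervals. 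Gronwall then propagates any control of $\Phi_k$ at a starting instant $\tau_k$ forward to all of $[\tau_k,T]$.

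The crux is choosing and bounding $\Phi_k(\tau_k)$ as $\tau_k\to-\infty$. Here I would exploit the dissipation integral $\int_{-\infty}^{\infty}\|\partial_t u(r)\|^{2p+2}dr<\infty$ and the decay $\|\partial_t u(t)\|_{\mathcal{H}^{-\varsigma}}\to 0$ from Proposition~\ref{ZFP2} to locate times $\tau$ arbitrarily far in the past at which $\|\partial_t u(\tau)\|$ is as small as desired. At such a $\tau$, equation \eqref{1.1} reads
\begin{align*}
-\Delta u(\tau)+g(u(\tau))=h-\partial_t^2 u(\tau)-\mathcal{J}(\|\partial_t u(\tau)\|^2)\partial_t u(\tau),
\end{align*}
so the trajectory is quasi-stationary at $\tau$. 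A Moser-type bootstrap, fed by the coercivity $g'(s)\geq-\kappa_1+\kappa_2|s|^{q-1}$ in \eqref{1.4}, upgrades $u(\tau)\in\mathcal{H}^1$ to $L^\infty$; standard elliptic regularity then gives $u(\tau)\in\mathcal{H}^2$ with norm $\leq\mathcal{Q}(\|h\|^2)$, and a companion estimate for $\|\partial_t u(\tau)\|_{\mathcal{H}^1}$ closes the bound on $\Phi_k(\tau_k)$. Combined with the Gronwall step, this yields $\Phi_k$ bounded uniformly on $[\tau_k,T]$, and the weak lower-semicontinuity of $\|\cdot\|_{\mathscr{E}^1}$ along the convergence of Corollary~\ref{ZFC1} transfers the bound to $\xi_u\in L^\infty((-\infty,T];\mathscr{E}^1)$; continuity in $\mathscr{E}^1$ then follows from the equation by interpolation.

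The main obstacle will be this last step: securing the uniform $\mathscr{E}^1$-bound at the starting instant $\tau_k$. Weak-norm smallness of $\partial_t u$ does not at all translate to control of $\partial_t^2 u$ or of $\Delta u$ in $L^2$, and the critical growth of $g$ blocks a naive elliptic regularity argument since $u\in H^1$ only yields $g(u)\in L^{6/5}$. Orchestrating the dissipation integral from Proposition~\ref{ZFP2}, the Strichartz control of $u$ in $L^{12}$, the monotonicity of $\mathcal{J}$, and the active linear damping $J_0>0$ to close this step is the technical heart of the argument.
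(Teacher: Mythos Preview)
Your approach has a genuine gap that cannot be closed with the tools you invoke. The problems are twofold.

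First, the Gronwall step does not close. In the quintic case the prefactor $C(1+\|u_k^{(k)}(t)\|_{L^{12}}^4)$ is only in $L^1_{\mathrm{loc}}$; the ETS estimate that would make $\sup_t\|u\|_{L^4(t,t+1;L^{12})}$ controlled by data is precisely what is \emph{not} available on bounded domains in the quintic regime (this is the whole difficulty the paper is built around). After Gronwall you get $\Phi_k(t)\leq \Phi_k(\tau_k)\exp\bigl(C\int_{\tau_k}^t(1+\|u_k^{(k)}\|_{L^{12}}^4)\,dr\bigr)$, and the exponential blows up as $\tau_k\to-\infty$ no matter how small $\Phi_k(\tau_k)$ is. The linear damping $J_0>0$ would help only if the Strichartz prefactor could be made \emph{small}, not merely bounded; the decomposition trick that achieves this (Step~3 of Theorem~\ref{ZFT8}) relies on compactness of trajectories near the attractor $\mathscr{A}_s$, which is downstream of the present theorem.

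Second, the ``quasi-stationary'' bootstrap at $\tau$ is circular. The elliptic relation you write carries $\partial_t^2 u(\tau)$ on the right, and a priori $\partial_t^2 u(\tau)\in\mathcal{H}^{-1}$ only. A Moser bootstrap cannot upgrade $u(\tau)$ to $\mathcal{H}^2$ without first placing $\partial_t^2 u(\tau)$ in $L^2$---but that is part of $\Phi_k(\tau)$, the very quantity you are trying to bound. Nothing you list produces $\partial_t u(\tau)\in\mathcal{H}^1$ either; weak-norm smallness of $\partial_t u$ gives no gradient control.

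The paper takes a completely different route, namely Zelik's parabolic regularization \cite{zelik1}. One rewrites \eqref{1.1} with a lower-order term $\ell(-\Delta)^{-1}u$ shifted into a modified forcing $\hat h$, and first solves the \emph{parabolic} problem $\partial_t z-\Delta z+\ell(-\Delta)^{-1}z+g(z)=\hat h$ on all of $\mathbb{R}$; parabolic smoothing delivers $z\in C_b(\mathbb{R};\mathcal{H}^2)$ together with $\|\partial_t z\|_{\mathcal{H}^2}+\|\partial_t^2 z\|_{L^2(t,t+1;\mathcal{H}^1)}\to 0$ as $t\to-\infty$. A regular backward solution $v$ of the full wave equation is then built as a perturbation $v=z+W$, with $W$ obtained via the implicit function theorem in $C_b((-\infty,T];\mathscr{E}^1)$: the linearization at $W=0$ has damping coefficient $\mathcal{J}(\|\partial_t z\|^2)\to J_0>0$ and forcing $H_z\to 0$, so it is invertible for $T$ sufficiently negative. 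Finally one proves $u\equiv v$ on $(-\infty,T']$ by an $\mathscr{E}$-level energy estimate on $Z_k=u_k^{(k)}-\mathrm{P}_k v$ (here Corollary~\ref{ZFC1} enters), exploiting the $L^\infty$-decay of $\partial_t v$ to absorb the critical cross-term $\langle g(v_k+Z_k)-g(v_k)-g'(v_k)Z_k,\partial_t v_k\rangle$. No Strichartz control of $u$ is used anywhere. The parabolic detour is exactly what supplies the $\mathscr{E}^1$ anchor in the far past that your direct higher-order energy cannot manufacture.
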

\begin{proof}
We will structure the proof into several steps.

\textbf{Step 1}. Rewrite Eq. \eqref{1.1} as follows:
\begin{align*}
\partial_{t}^{2}u-\Delta u+\mathcal{J}(\|\partial_{t}u\|^{2})\partial_{t}u+\ell(-\Delta)^{-1}u+g(u)
=\hat{h}(t):=\ell(-\Delta)^{-1}u+h(x).
\end{align*}
From the definition of $\hat{h}$ and by applying Theorem \ref{ZFT2}, we obtain
\begin{align}\label{ZF4.1}
\|\hat{h}(T)\|^{2}+
\int_{T}^{T+1}\|\partial_{t}\hat{h}(t)\|_{\mathcal{H}^{2}}^{2}dt
\leq \ell^{2}\mathcal{Q}(\|h\|),~\forall T\in\mathbb{R},
\end{align}
where $\mathcal{Q}(\cdot)$ is a monotone function independent of $\ell$ and $T$. Utilizing Proposition \ref{ZFP2}, we infer that
\begin{align}\label{Z4.1}
\partial_{t}\hat{h}\in\mathcal{C}_{b}(\mathbb{R};\mathcal{H}^{2-\varsigma}),\quad
\lim\limits_{t\rightarrow-\infty}\|\partial_{t}\hat{h}(t)\|_{\mathcal{H}^{2-\varsigma}}=0, \quad \forall\varsigma\in(0,1].
\end{align}

\textbf{Step 2}. Applying \cite[Lemma 2.2]{zelik1}, we know that for sufficiently large $\ell=4\kappa_{1}^{2}$, the parabolic equation
\begin{align}\label{Z4.2}
\partial_{t}z-\Delta z+\ell(-\Delta)^{-1}z+g(z)=\hat{h}(t), \quad t\in\mathbb{R}
\end{align}
possesses a unique solution $z(t)$ in the class $C_{b}(\mathbb{R};\mathcal{H}^{2})$ with the following estimates:
\begin{align}\label{Z4.3}
\|z(T)\|_{\mathcal{H}^{2}}\leq \ell^{2}\mathcal{Q}(\|h\|),~
\partial_{t}z\in C_{b}(\mathbb{R};\mathcal{H}^{2}),~
\partial_{t}^{2}z\in L^{2}([T,T+1];\mathcal{H}^{1}),~\forall T\in\mathbb{R},
\end{align}
and the following convergence
\begin{align}\label{Z4.4}
\lim\limits_{T\rightarrow-\infty}\left(\|\partial_{t}z(T)\|_{\mathcal{H}^{2}}
+\|\partial_{t}^{2}z\|_{L^{2}([T,T+1];\mathcal{H}^{1})}\right)=0.
\end{align}

\textbf{Step 3.} Claim \#1: There exists a time $T=T(u,\ell)$ such that the problem
\begin{align}\label{Z4.5}
\partial_{t}^{2}v-\Delta v+\mathcal{J}(\|\partial_{t}v\|^{2})\partial_{t}v+g(v)+\ell(-\Delta)^{-1}v=\hat{h}(t),\quad t\leq T
\end{align}
has a unique regular backward solution $\xi_{v}(t)\in\mathscr{E}^{1}$, which satisfies the following estimate:
\begin{align}\label{Z4.6}
\|\partial_{t}v(t)\|_{\mathcal{H}^{2}}+\|v(t)\|_{\mathcal{H}^{2}}
\leq\mathcal{Q}_{\ell}(\|h\|),\quad t\leq T,
\end{align}
for some monotone function $\mathcal{Q}_{\ell}(\cdot)$ depending on $\ell$. Furthermore, we also have
\begin{align}\label{Z4.7}
\lim_{t\rightarrow-\infty}\|\partial_{t}v(t)\|_{L^{\infty}}=0.
\end{align}

\emph{Proof of claim \#1}: We divide the proof into two essential steps.

\textbf{Step 3(i).} Assume $v=z+W$, then $W$ satisfies
\begin{align}\label{Z4.8}
\nonumber\partial_{t}^{2}W&-\Delta W+\mathcal{J}(\|\partial_{t}(z+W)\|^{2})\partial_{t}(z+W)-
\mathcal{J}(\|\partial_{t}z\|^{2})\partial_{t}z
\\&+g(z+W)-g(z)+\ell(-\Delta)^{-1}W=H_{z}(t):=-\partial_{t}^{2}z+\left(1-\mathcal{J}(\|\partial_{t}z\|^{2})\right)\partial_{t}z.
\end{align}
We can apply the implicit function theorem to solve Eq. \eqref{Z4.8} in the space
\begin{align}\label{Z4.9}
\mathds{W}_{T}:=C_{b}((-\infty,T];\mathscr{E}^{1}),
\end{align}
where $T$ is sufficiently small.

Firstly, recalling \textbf{Step 2}, we have
\begin{align}\label{Z4.10}
\lim_{T\rightarrow-\infty}
\|H_{z}\|_{L^{2}([T,T+1];\mathcal{H}^{1})}=0.
\end{align}
Now, we intend to verify that the variation equation at $W=0$
\begin{align}\label{Z4.11}
\nonumber&\partial_{t}^{2}V-\Delta V+g'(z)V+\ell(-\Delta)^{-1}V
\\&\qquad+\mathcal{J}(\|\partial_{t}z\|^{2})\partial_{t}V
+2\mathcal{J}'(\|\partial_{t}z\|^{2})\langle \partial_{t}z,\partial_{t}V\rangle\partial_{t}z=H(t)
\end{align}
is uniquely solvable for every $H\in L^{2}_{loc}((-\infty,T];\mathcal{H}^{1})$ such that
\begin{align*}
\|H\|_{L_{b}^{2}((-\infty,T];\mathcal{H}^{1})}:=\sup_{t\in(-\infty,T-1)}\|H\|_{L^{2}((t,t+1];\mathcal{H}^{1})}<\infty
\end{align*}
provided that $T$ is small enough. Taking the multiplier $\partial_{t}V+2\varepsilon V$ in \eqref{Z4.11} yields
\begin{align}\label{ZF4.13}
\frac{d}{dt}\mathcal{E}_{V}(t)+\mathcal{Q}_{V}(t)+\mathcal{J}_{V}(t)+\mathcal{G}_{V}(t)=\mathcal{I}_{V}(t),
\end{align}
where
\begin{align*}
\mathcal{E}_{V}(t)&=\|\partial_{t}V\|^{2}
+\|V\|_{\mathcal{H}^{1}}^{2}
+\ell\|V\|_{\mathcal{H}^{-1}}^{2}
+2\varepsilon\langle\partial_{t}V,V\rangle
+\langle g'(z)V,V\rangle,
\\\mathcal{Q}_{V}(t)&=\left(2\mathcal{J}(\|\partial_{t}z\|^{2})-2\varepsilon\right)\|\partial_{t}V\|^{2}
+2\varepsilon\|V\|_{\mathcal{H}^{1}}^{2}
\\&\qquad+2\varepsilon\ell\|V\|_{\mathcal{H}^{-1}}^{2}+2\varepsilon\langle g'(z)V,V\rangle,
\\\mathcal{J}_{V}(t)&=4\mathcal{J}'(\|\partial_{t}z\|^{2})\langle \partial_{t}z,\partial_{t}V\rangle^{2}
+2\varepsilon\mathcal{J}(\|\partial_{t}z\|^{2})\langle\partial_{t}V,V\rangle
\\&\qquad+4\varepsilon\mathcal{J}'(\|\partial_{t}z\|^{2})\langle \partial_{t}z,\partial_{t}V\rangle\langle\partial_{t}z,V\rangle,
\\\mathcal{G}_{V}(t)&=-\langle g''(z)\partial_{t}z,V^{2}\rangle,
\qquad\mathcal{I}_{V}(t)=2\langle H(t),\partial_{t}V+\varepsilon V\rangle.
\end{align*}
By choosing
\begin{align}\label{ZF4.14}
\varepsilon=\min\{\frac{1}{4},\frac{J_{0}}{6},\frac{\lambda_{1}}{8},\frac{\lambda_{1}J_{0}}{18},
\frac{\lambda_{1}J_{0}}{(2J_{0}+4\mathcal{J}'(0)+1)^{2}}\}
\end{align}
sufficiently small, we ensure that
\begin{align}\label{ZF4.15}
\frac{1}{2}\|\xi_{V}(t)\|_{\mathscr{E}}^{2}
\leq\mathcal{E}_{V}(t)
\leq C_{0}\|\xi_{V}(t)\|_{\mathscr{E}}^{2}
\end{align}
and
\begin{align}\label{ZF4.16}
\frac{d}{dt}\mathcal{E}_{V}(t)
+\varepsilon\mathcal{E}_{V}(t)
\leq C_{1}\|H(t)\|^{2}+\mathcal{Q}_{V}^{(0)}(t),
\end{align}
where $C_{0}$ depend on $\|h\|$ and $\kappa_{1}$, $C_{1}=\frac{2}{J_{0}}+\frac{8}{\lambda_{1}}$ and
\begin{align}\label{ZF4.17}
\mathcal{Q}_{V}^{(0)}(t)=-\frac{J_{0}}{2}\|\partial_{t}V\|^{2}
-\frac{\varepsilon}{4}\|V\|_{\mathcal{H}^{1}}^{2}
+\frac{\|g''(z)\|_{L^{\infty}}}{\lambda_{1}}\|\partial_{t}z\|_{L^{\infty}}\|V\|_{\mathcal{H}^{1}}^{2}
+\sum_{i=1}^{2}\mathcal{J}_{V}^{(i)}(t),
\end{align}
and
\begin{align}\label{ZF4.18}
\mathcal{J}_{V}^{(1)}=\frac{2\varepsilon}{\sqrt{\lambda_{1}}}
\mathcal{J}(\|\partial_{t}z\|^{2})\|\partial_{t}V\|\|V\|_{\mathcal{H}^{1}},\quad
\mathcal{J}_{V}^{(2)}=\frac{4\varepsilon}{\sqrt{\lambda_{1}}}
\mathcal{J}'(\|\partial_{t}z\|^{2})\|\partial_{t}z\|^{2}\|\partial_{t}V\|\|V\|_{\mathcal{H}^{1}}.
\end{align}
Applying estimate \eqref{Z4.3}, convergence \eqref{Z4.4}, and the embedding $\mathcal{H}^{2}\subset L^{\infty}(\bar{\Omega})$, we obtain
\begin{align}\label{ZF4.19}
\mathcal{Q}_{V}^{(0)}(t)\leq
-2J_{0}\|\partial_{t}V\|^{2}-\frac{\varepsilon}{8}\|V\|_{\mathcal{H}^{1}}^{2}+C_{2}\|\partial_{t}V\|\|V\|_{\mathcal{H}^{1}}
\end{align}
with $C_{2}=\frac{4\varepsilon}{\sqrt{\lambda_{1}}}(\mathcal{J}'(0)+\frac{1}{8})+\frac{2\varepsilon}{\sqrt{\lambda_{1}}}(J_{0}+\frac{1}{4})$. Applying \eqref{ZF4.14} to \eqref{ZF4.19}, applying Gronwall's inequality to \eqref{ZF4.16}, we deduce that
\begin{align}\label{ZF4.20}
\|\xi_{V}(t)\|_{\mathscr{E}}^{2}
\leq C_{1}\int_{-\infty}^{t}e^{-\varepsilon(t-r)}\|H(r)\|^{2}dr
\leq C_{1}\varepsilon^{-1}\|H\|_{L_{b}^{2}((-\infty,T];L^{2})},\quad t\leq T.
\end{align}
It follows that the solution to \eqref{Z4.11} is unique.

Taking the multiplier $-\Delta(\partial_{t}V+\varepsilon V)$ in \eqref{Z4.11}, and choosing
\begin{align}\label{ZF4.21}
\varepsilon=\min\{\frac{1}{4},\frac{\lambda_{1}}{8},\frac{J_{0}}{6},\frac{\lambda_{1}J_{0}}{16},
\frac{\lambda_{1}J_{0}}{16(\sqrt{\lambda_{1}}J_{0}+4\mathcal{J}'(0)+1)^{2}}\}
\end{align}
small enough, to discover after some computations that
\begin{align}\label{ZF4.22}
\frac{1}{4}\|\xi_{V}(t)\|_{\mathscr{E}^{1}}^{2}
\leq
\tilde{\mathcal{E}}_{V}(t)
\leq C_{2}\|\xi_{V}(t)\|_{\mathscr{E}^{1}}^{2}
\end{align}
and
\begin{align}\label{ZF4.23}
\frac{d}{dt}\tilde{\mathcal{E}}_{V}(t)+\varepsilon\tilde{\mathcal{E}}_{V}(t)
\leq C_{3}\|H(t)\|_{\mathcal{H}^{1}}^{2}+C_{4}\|V(t)\|_{\mathcal{H}^{1}}^{2}
+\sum_{i=1}^{2}\tilde{\mathcal{Q}}_{V}^{(i)}(t),
\end{align}
where the positive constants $C_{i}$ ($i=2,3,4$) depend on $\lambda_{1}$, $\|h\|$, and the structural parameters in model \eqref{1.1}. The $\tilde{\mathcal{E}}_V(t)$ and $\tilde{\mathcal{Q}}_V^{(i)}(t)$ are defined as follows:
\begin{align*}
\tilde{\mathcal{E}}_{V}(t)&=\|\partial_{t}V\|_{\mathcal{H}^{1}}^{2}+\|V\|_{\mathcal{H}^{2}}^{2}
+\ell\|V\|^{2}
+2\varepsilon\langle\langle\partial_{t}V,V\rangle\rangle
+\langle g'(z)\nabla V,\nabla V\rangle,
\\\tilde{\mathcal{Q}}_{V}^{(1)}(t)&=
-\frac{J_{0}}{2}\|\partial_{t}V\|_{\mathcal{H}^{1}}^{2}
-\frac{\varepsilon}{8}\|V\|_{\mathcal{H}^{2}}^{2}
\\&\qquad+
\left(\frac{2\varepsilon}{\sqrt{\lambda_{1}}}\mathcal{J}(\|\partial_{t}z\|^{2})
+\frac{4\varepsilon\mathcal{J}'(\|\partial_{t}z\|^{2})}{\lambda_{1}}\right)\|\partial_{t}V\|_{\mathcal{H}^{1}}\|V\|_{\mathcal{H}^{2}},
\\
\tilde{\mathcal{Q}}_{V}^{(2)}(t)
&=-\frac{J_{0}}{2}\|\partial_{t}V\|_{\mathcal{H}^{1}}^{2}
-\frac{3\varepsilon}{4}\|V\|_{\mathcal{H}^{2}}^{2}
+\frac{1}{\lambda_{1}}\|g'(z)\|_{L^{\infty}}\|\partial_{t}z\|_{L^{\infty}}\|V\|_{\mathcal{H}^{2}}^{2}
\\&\qquad+\frac{4\mathcal{J}'(\|\partial_{t}z\|^{2})}{\sqrt{\lambda_{1}}}\|\partial_{t}z\|_{L^{\infty}}\|\partial_{t}z\|_{\mathcal{H}^{1}}
\|\partial_{t}V\|_{\mathcal{H}^{1}}^{2}.
\end{align*}
Convergence \eqref{Z4.4}, along with estimates \eqref{Z4.3} and \eqref{ZF4.20}, and the condition in \eqref{ZF4.21}, imply that
\begin{align}\label{ZF4.24}
\frac{d}{dt}\tilde{\mathcal{E}}_{V}(t)+\varepsilon\tilde{\mathcal{E}}_{V}(t)
\leq C_{3}\|H(t)\|_{\mathcal{H}^{1}}^{2}+C_{4}\|H\|_{L_{b}^{2}((-\infty,T];L^{2})}^{2},\quad t\leq T
\end{align}
with $T$ is small enough. Recalling \eqref{ZF4.22}, we apply Gronwall's inequality to \eqref{ZF4.24} once more, leading to the result:
\begin{align}\label{ZF4.25}
\|\xi_{V}(t)\|_{\mathscr{E}^{1}}^{2}
\leq C_{5}\|H\|_{L_{b}^{2}((-\infty,T];\mathcal{H}^{1})}^{2},\quad t\leq T.
\end{align}
Applying the implicit function theorem to Eq. \eqref{Z4.8}, we obtain a unique solution $\xi_{W}\in\mathds{W}_{T}$ of Eq. \eqref{Z4.8} with $T$ sufficiently small. Moreover, combining \eqref{ZF4.1}, \eqref{Z4.3}, and \eqref{Z4.10}, we find that
\begin{align}\label{ZF4.26}
\|\xi_{W}(t)\|_{\mathscr{E}^{1}}^{2}
\leq \mathcal{Q}(\|h\|),\quad t\leq T \quad \text{\rm and }
\quad
\lim\limits_{t\rightarrow-\infty}\|\partial_{t}W(t)\|_{\mathcal{H}^{1}}=0,
\end{align}
where the monotone function independent of $t$, $T$ and $W$. By combining the estimates for $z$ in \textbf{Step 2} with \eqref{ZF4.26}, we derive
\begin{align}\label{ZF4.27}
\|\partial_{t}v(t)\|_{\mathcal{H}^{1}}^{2}+\|v(t)\|_{\mathcal{H}^{2}}^{2}
\leq\mathcal{Q}(\|h\|^{2}), ~t\leq T(\ell,u) \text{\quad and \quad}
\lim\limits_{t\rightarrow-\infty}\|\partial_{t}v(t)\|_{\mathcal{H}^{1}}=0.
\end{align}

\textbf{Step 3(ii).} It remains to check the estimates of $\partial_{t}v$ in \eqref{Z4.6} and the convergence in \eqref{Z4.7}. Define $\zeta=\partial_{t}v$ and differentiate \eqref{Z4.5} with respect to $t$, yielding the equation
\begin{align}\label{ZF4.28}
\partial_{t}^{2}\zeta-\Delta\zeta+\mathcal{J}(\|\partial_{t}v\|^{2})\partial_{t}\zeta
+2\mathcal{J}'(\|\partial_{t}v\|^{2})\langle\zeta,\partial_{t}\zeta\rangle\zeta
+\ell(-\Delta)^{-1}\zeta=I_{\zeta}(t)
\end{align}
with $I_{\zeta}(t)=\partial_{t}\hat{h}(t)-g'(v)\zeta$. Taking the multiplier $-\Delta(\partial_{t}\zeta+\varepsilon\zeta)$ in \eqref{ZF4.28}, we obtain
\begin{align}\label{ZF4.29}
\frac{d}{dt}\mathcal{E}_{\zeta}(t)
+\mathcal{Q}_{\zeta}(t)
+\mathcal{J}_{\zeta}(t)=\mathcal{I}_{\zeta}(t),
\end{align}
where
\begin{align*}
\mathcal{E}_{\zeta}(t)&=
\|\partial_{t}\zeta\|_{\mathcal{H}^{1}}^{2}
+\|\zeta\|_{\mathcal{H}^{2}}^{2}+\ell\|\zeta\|^{2}
+2\varepsilon\langle\langle\partial_{t}\zeta,\zeta\rangle\rangle,~
\mathcal{I}_{\zeta}(t)=2\langle\langle I_{\zeta}(t),\partial_{t}\zeta+\varepsilon\zeta\rangle\rangle,
\\\mathcal{Q}_{\zeta}(t)&=(2\mathcal{J}(\|\partial_{t}v\|^{2})-2\varepsilon)
\|\partial_{t}\zeta\|_{\mathcal{H}^{1}}^{2}
+2\varepsilon\|\zeta\|_{\mathcal{H}^{2}}^{2}
+2\varepsilon\ell\|\zeta\|^{2},
\\\mathcal{J}_{\zeta}(t)&=
4\mathcal{J}'(\|\partial_{t}v\|^{2})\langle\zeta,\partial_{t}\zeta\rangle
\langle\langle\zeta,\partial_{t}\zeta\rangle\rangle
+4\varepsilon\mathcal{J}'(\|\partial_{t}v\|^{2})\langle\zeta,\partial_{t}\zeta\rangle
\|\zeta\|_{\mathcal{H}^{1}}^{2}.
\end{align*}
Using \eqref{ZF4.27} and similar calculations as in the proof of \eqref{ZF4.24}, we can select
\begin{align*}
\varepsilon=\min\{1,\frac{\sqrt{\lambda_{1}}}{2},\frac{J_{0}\lambda_{1}}{4},\frac{J_{0}}{3},\frac{J_{0}\lambda_{1}^{2}}{64(\mathcal{J}'(0)+1)^{2}}\}
\end{align*}
to be sufficiently small such that
\begin{align*}
\frac{1}{2}\|\xi_{\zeta}(t)\|_{\mathscr{E}^{1}}^{2}
\leq \|\mathcal{E}_{\zeta}(t)\|
\leq C\|\xi_{\zeta}(t)\|_{\mathscr{E}^{1}}^{2}
\end{align*}
and
\begin{align}\label{ZF4.30}
\frac{d}{dt}\mathcal{E}_{\zeta}(t)
+\varepsilon\mathcal{E}_{\zeta}(t)
\leq C \|I_{\zeta}(t)\|_{\mathcal{H}^{1}}^{2},\quad t\leq T.
\end{align}
Applying Gronwall's inequality to \eqref{ZF4.30}, we obtain
\begin{align}\label{ZF4.31}
\|\xi_{\zeta}(t)\|_{\mathscr{E}^{1}}^{2}
\leq C\int_{-\infty}^{t}e^{-\varepsilon(t-r)}\|I_{\zeta}(r)\|_{\mathcal{H}^{1}}^{2}dr,\quad t\leq T.
\end{align}
By embedding $\mathcal{H}^2 \subset \mathcal{C}(\bar{\Omega})$, using the convergence in \eqref{Z4.1} and \eqref{ZF4.27}, and the estimates in \eqref{ZF4.31}, we derive the estimates for the $\mathcal{H}^2$--norm of $\partial_t v(t)$ in \eqref{Z4.6} and convergence in \eqref{Z4.7}.

\textbf{Step 4.} To establish $u\equiv v$ for $t\leq T$, consider that the complete trajectory $\xi_{u}\in\bar{\mathfrak{E}}((-\infty,\infty))$. By applying Corollary \ref{ZFC1}, there exists a sequence $\xi_{u_{k}}^{(k)}$ such that $\xi_{u_{k}}^{(k)}\rightharpoonup\xi_{u}$ in $\mathcal{C}([-T,\infty);\mathcal{X}_{w})$ for any $T>0$. Furthermore, $u_{k}^{(k)}=\sum_{l=1}^{k}d_{l}^{k}(t)e_{l}$ satisfies the equation
\begin{align}\label{ZF4.32}
\partial_{t}^{2}u_{k}^{(k)}-\Delta u_{k}^{(k)}+\mathcal{J}\left(\|\partial_{t}u_{k}^{(k)}\|^{2}\right)\partial_{t}u_{k}^{(k)}
+\mathrm{P}_{k}g\left(u_{k}^{(k)}\right)=
\mathrm{P}_{k}h(x)
\end{align}
with the initial condition $\xi_{u_{k}}^{(k)}(t_{k})=\mathrm{P}_{k}\xi_{u_{k}}(t_{k})$, where $t\geq t_{k}$ and $t_{k}\rightarrow-\infty$ as $k\rightarrow\infty$, and $\|\xi_{u_{k}}^{(k)}(t_{k})\|_{\mathscr{E}}\leq C$. Define $v_{k}(t)=\mathrm{P}_{k}v(t)$, $t\leq T$. By \textbf{Step 3}, the solution $\xi_{v}(t)$ is bounded in $\mathscr{E}^{1}$ for $t\leq T$, and consequently
\begin{align}\label{ZF4.33}
\lim\limits_{k\rightarrow\infty}\|\xi_{v_{k}}-\xi_{v}\|_{\mathcal{C}_{b}((-\infty,t],\mathscr{E})}=0,~t\leq T,\quad
\lim\limits_{k\rightarrow\infty}\|\xi_{v_{k}}-\xi_{v}\|_{\mathcal{C}_{b}((-\infty,T]\times\Omega)}=0.
\end{align}
Here, we utilized the fact that $\mathcal{H}^{2}\Subset\mathcal{C}(\overline{\Omega})$ and that Fourier series converge uniformly on compact sets. Define $Z(t):=u(t)-v(t)$ and $Z_{k}(t):=u_{k}^{(k)}(t)-v_{k}(t)$. From equation \eqref{ZF4.32}, we obtain the following equation
\begin{align}\label{ZF4.34}
\nonumber\partial_{t}^{2}Z_{k}&-\Delta Z_{k}+\ell(-\Delta)^{-1}Z_{k}
+\Gamma_{2}(t)\partial_{t}Z_{k}
\\&+\Gamma_{1}(t)(\|\partial_{t}u_{k}^{(k)}\|^{2}-\|\partial_{t}v_{k}\|^{2})
(\partial_{t}u_{k}^{(k)}+\partial_{t}v_{k})
+\mathrm{P}_{k}(g(u_{k}^{(k)})-g(v_{k}))
=G_{k}(t),
\end{align}
where
\begin{align}\label{ZHOU4.35}
\begin{split}
\Gamma_{2}(t)&=\frac{\mathcal{J}\left(\|\partial_{t}u_{k}^{(k)}\|^{2}\right)
+\mathcal{J}\left(\|\partial_{t}v_{k}\|^{2}\right)}{2},
\\
\Gamma_{1}(t)&=\frac{1}{2}\int_{0}^{1}
\mathcal{J}'\left(s\|\partial_{t}u_{k}^{(k)}(t)\|^{2}+(1-s)\|\partial_{t}v_{k}(t)\|^{2}\right)ds,
\\
G_{k}(t)&=
\mathrm{P}_{k}g(v)-\mathrm{P}_{k}g(v_{k})
+\mathcal{J}(\|\partial_{t}v\|^{2})\partial_{t}v_{k}
-\mathcal{J}(\|\partial_{t}v_{k}\|^{2})\partial_{t}v_{k}.
\end{split}
\end{align}
In addition, by convergence as described in equation \eqref{ZF4.33}, we have
\begin{align}\label{ZHF4.34}
\lim\limits_{k\rightarrow\infty}\|G_{k}\|_{\mathcal{C}_{b}((-\infty,T]\times\Omega)}=0,\quad\text{\rm and}\quad
\|\xi_{Z_{k}}(t_{k})\|_{\mathscr{E}}\leq C
\end{align}
with $C$ independent of $k$. Multiplying Eq. \eqref{ZF4.34} by $\partial_{t}Z_{k}+\varepsilon Z_{k}$ and setting
\begin{align*}
\mathcal{E}_{Z_{k},v_{k}}(t)=\|\xi_{Z_{k}}\|_{\mathscr{E}}^{2}
+2\varepsilon\langle\partial_{t}Z_{k},Z_{k}\rangle
+\ell\|Z_{k}\|_{\mathcal{H}^{-1}}^{2}
+2\langle G(v_{k}+Z_{k})-G(v_{k})-g(v_{k})Z_{k},1\rangle,
\end{align*}
to derive the identity
\begin{align}\label{ZHOU4.37}
\frac{d}{dt}\mathcal{E}_{Z_{k},v_{k}}(t)
+\varepsilon\mathcal{E}_{Z_{k},v_{k}}(t)+\sum_{i=1}^{2}\mathcal{Q}^{(i)}_{Z_{k},v_{k}}(t)
=\sum_{i=1}^{3}\mathcal{G}^{(i)}_{Z_{k},v_{k}}(t),
\end{align}
where
\begin{align}\label{ZHOU4.38}
\begin{split}
\mathcal{Q}^{(1)}_{Z_{k},v_{k}}(t)&=
(2\Gamma_{2}(t)-3\varepsilon)\|\partial_{t}Z_{k}\|^{2}
+2\Gamma_{1}(t)(\|\partial_{t}u_{k}^{(k)}\|^{2}-\|\partial_{t}v_{k}\|^{2})^{2}
\\&\qquad+\varepsilon\|Z_{k}\|_{\mathcal{H}^{1}}^{2}
+\varepsilon\ell\|Z_{k}\|_{\mathcal{H}^{-1}}^{2}
-2\varepsilon^{2}\langle\partial_{t}Z_{k},Z_{k}\rangle,
\\
\mathcal{Q}^{(2)}_{Z_{k},v_{k}}(t)&=
2\varepsilon\Gamma_{1}(t)(\|\partial_{t}u_{k}^{(k)}\|^{2}-\|\partial_{t}v_{k}\|^{2})
\langle\partial_{t}u_{k}^{(k)}+\partial_{t}v_{k},Z_{k}\rangle
\\&\qquad+2\varepsilon\Gamma_{2}(t)\langle\partial_{t}Z_{k},Z_{k}\rangle,
\\
\mathcal{G}^{(1)}_{Z_{k},v_{k}}(t)&=
2\varepsilon\langle G(v_{k}+Z_{k})-G(v_{k})-g(v_{k})Z_{k}
\\&\qquad-[g(v_{k}+Z_{k})-g(v_{k})]Z_{k},1\rangle,
\\\mathcal{G}^{(2)}_{Z_{k},v_{k}}(t)&=
2\langle g(v_{k}+Z_{k})
-g(v_{k})-g'(v_{k})Z_{k},\partial_{t}v_{k}\rangle,
\\\mathcal{G}^{(3)}_{Z_{k},v_{k}}(t)&=2\langle G_{k},\partial_{t}Z_{k}+\varepsilon Z_{k}\rangle.
\end{split}
\end{align}
Using assumptions \eqref{1.4} and \eqref{1.5}, we can derive the following inequalities:
\begin{align}\label{Zfeng4.39}
G(v+z)-G(v)-[g(v)z+(g(v+z)-g(v))z]
\leq\frac{\kappa_{1}}{2}|z|^2-\delta_q'|z|^2\left(|v|^{q-1}+|z|^{q-1}\right)
\end{align}
and
\begin{align}\label{Zfeng4.40}
|g(v+z)-g(v)-g'(v)z|\leq C|z|^{2}\left(1+|v|^{q-2}+|z|^{q-2}\right),
\end{align}
where $\delta_q^{\prime}$ is positive and depends only on $q$, and the constant $C$ is independent of $v$ and $z$ (see \cite[Proposition 2.1]{zelik1} for more details). Applying \eqref{Zfeng4.39} and \eqref{Zfeng4.40} to \eqref{ZHOU4.38}, we obtain
\begin{align*}
\mathcal{G}^{(1)}_{Z_{k},v_{k}}(t)
\leq\varepsilon\kappa_{1}\|Z_{k}\|^{2}-\varepsilon C(\kappa_{2},q)\|Z_{k}\|_{L^{q+1}}^{q+1}
\end{align*}
and
\begin{align*}
\mathcal{G}^{(2)}_{Z_{k},v_{k}}(t)
\leq C\|\partial_{t}v_{k}\|_{L^{\infty}}\langle |Z_{k}|^{2}(1+|v_{k}|^{q-2}+|Z_{k}|^{q-2}),1\rangle,
\end{align*}
where the positive constant $C_{6}=2^{p-2}C_{g}$. Here, the Taylor--MacLaurin formula and assumption \eqref{1.4} have been implicitly used. Choosing
\begin{align}\label{ZF4.35}
\varepsilon=\min\{1,\frac{J_{0}}{6},\frac{\lambda_{1}}{4},\frac{3J_{0}\lambda_{1}}{16\mathcal{J}^{2}(R_{0}^{2})},
\frac{\lambda_{1}J_{0}}{16^{2}(J_{0}+M_{0}R_{0}^{2}+1)^{2}}\},
\end{align}
where $M_{0}:=\sup\limits_{0\leq r\leq 2R_{0}^{2}}\mathcal{J}'(r)$, we have
\begin{align}\label{ZF4.36}
\frac{d}{dt}\mathcal{E}_{Z_{k},v_{k}}(t)
+\varepsilon\mathcal{E}_{Z_{k},v_{k}}(t)
\leq \mathcal{G}_{Z_{k},v_{k}}(t),
\end{align}
where
\begin{align*}
\mathcal{G}_{Z_{k},v_{k}}(t)&=
C_{7}\|G_{k}(t)\|^{2}
-\varepsilon C(\kappa_{2},q)\|Z_{k}\|_{L^{q+1}}^{q+1}-\mathcal{Q}^{(3)}_{Z_{k},v_{k}}(t)+
\\&\qquad+C_{6}\|\partial_{t}v_{k}\|_{L^{\infty}}\langle |Z_{k}|^{2}(1+|v_{k}|^{q-2}+|Z_{k}|^{q-2}),1\rangle,
\\
\mathcal{Q}^{(3)}_{Z_{k},v_{k}}(t)&=\frac{9J_{0}}{16}\|\partial_{t}Z_{k}\|^{2}
+\frac{3\varepsilon}{16}\|Z_{k}\|_{\mathcal{H}^{1}}^{2}
-\varepsilon[\mathcal{J}(\|\partial_{t}v_{k}\|^{2})+
\\&\qquad+\frac{4\Gamma_{1}(t)}{\sqrt{\lambda_{1}}}(R_{0}^{2}+\|\partial_{t}v_{k}\|^{2})]
\|\partial_{t}Z_{k}\|\|Z_{k}\|_{\mathcal{H}^{1}}
\end{align*}
and  $C_{7}=\frac{8}{3J_{0}}+\frac{8}{\lambda_{1}}$. According to the convergence results given in \eqref{Z4.7} and \eqref{ZF4.33}, and considering our choice of $\varepsilon$ as specified in \eqref{ZF4.35}, there exists a time $T^{\prime} \leq T$ such that, for sufficiently large $k$, we obtain
\begin{align*}
\mathcal{G}_{Z_{k},v_{k}}(t)
\leq C_{7}\|G_{k}(t)\|^{2},\qquad t\leq T'.
\end{align*}
Applying Gronwall's inequality to \eqref{ZF4.36}, we obtain
\begin{align}
\mathcal{E}_{Z_{k},v_{k}}(t)
\leq \mathcal{E}_{Z_{k},v_{k}}(t_{k})e^{-\varepsilon(t-t_{k})}
+C_{7}\int_{t_{k}}^{t}e^{-\varepsilon(t-r)}\|G_{k}(r)\|^{2}dr, \quad t\leq T'.
\end{align}
Utilizing the fact that (see \cite[Proposition 2.1]{zelik1} for details)
\begin{align*}
G(v+z)-G(v)-g(v)z\geq-\kappa_{1}|z|^{2}+\delta_{q}|z|^{2}(|v|^{q-1}+|z|^{q-1}),
\end{align*}
and combining \eqref{1.4}, $\ell\geq4\kappa_{1}^{2}$ and \eqref{ZF4.35}, we derive
\begin{align}\label{ZHF4.39}
\nonumber\|\xi_{Z_{k}}(t)\|_{\mathscr{E}}^{2}
\leq &\mathcal{Q}(\|\xi_{u_{k}}^{(k)}(t_{k})\|_{\mathscr{E}}^{2},\|\xi_{v_{k}}(t_{k})\|_{\mathscr{E}}^{2})
e^{-\varepsilon(t-t_{k})}
\\&+2C_{7}\int_{t_{k}}^{t}e^{-\varepsilon(t-r)}\|G_{k}(r)\|^{2}dr,~t\leq T',
\end{align}
where the monotone function $\mathcal{Q}(\cdot,\cdot)$ is independent of $Z_{k}$, $v_{k}$, $k$, $t$ and $t_{k}$. Noting that $\|\xi_{u_{k}}^{(k)}(t_{k})\|_{\mathscr{E}}$ is uniformly bounded and \eqref{ZHF4.34}, we take the limit as $k\rightarrow \infty$ in \eqref{ZHF4.39}, thereby obtaining the estimate $\|\xi_{Z}(t)\|_{\mathscr{E}}^{2}\leq 0$, $t\leq T'$. Thus, the proof of Theorem \ref{ZFT5} is complete.
\end{proof}

\begin{remark}
Using the fact that $u(t)=v(t)$ for $t\leq T_{u}$ and the estimate given in \eqref{Z4.6}, we obtain
\begin{align}\label{ZHOU4.43}
\|\partial_{t}u(t)\|_{\mathcal{H}^{2}}+\|u(t)\|_{\mathcal{H}^{2}}
\leq\mathcal{Q}(\|h\|),\quad t\leq T_{u},
\end{align}
where the monotone function $\mathcal{Q}$ depends only on the structural parameters specified in Assumption \ref{A1.1}.
\end{remark}

\begin{theorem}\label{ZFT6}
Let $\mathcal{J}(\cdot)$, $g$ and $h$ satisfy Assumption \ref{A1.1}, and let $\mathcal{J}(0)>0$. Then, the weak global attractor $\mathscr{A}_{w}$ for ES $\mathfrak{E}$, as established in Theorem \ref{ZFT3}, is in a more regular space: $\mathscr{A}_{w}\subset\mathscr{E}^{1}$.
\end{theorem}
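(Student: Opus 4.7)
The strategy is to combine the backward $\mathscr{E}^1$-regularity provided by Theorem \ref{ZFT5} with the forward smoothing of S--S solutions (Theorem \ref{ZFT1}) and the representation of $\mathscr{A}_w$ from Theorem \ref{ZFT3}. Fix an arbitrary $\xi_0\in\mathscr{A}_w$; by Theorem \ref{ZFT3} one may choose a complete trajectory $\xi_u\in\bar{\mathfrak{E}}((-\infty,\infty))$ with $\xi_u(0)=\xi_0$, and by Theorem \ref{ZFT5} there exists $T=T(u)\le 0$ (the case $T>0$ being immediate) such that $\eta:=\xi_u(T)\in\mathscr{E}^1$ with $\|\eta\|_{\mathscr{E}^1}\le\mathcal{Q}(\|h\|^2)$. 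The task then reduces to propagating this $\mathscr{E}^1$-regularity forward from time $T$ up to time $0$.

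For the forward step, I would exploit the strong backward solution $v$ built in Step 3 of the proof of Theorem \ref{ZFT5}. Because the auxiliary term $\ell(-\Delta)^{-1}v$ is absorbed into $\hat{h}$, the function $v$ actually solves Eq. \eqref{1.1} in the strong sense on $(-\infty,T]$, and $\xi_u\equiv\xi_v$ there by Step 4 of that proof. Since $\xi_v(T)=\eta\in\mathscr{E}^1$, Theorem \ref{ZFT1} supplies a unique global S--S solution starting from $\eta$ which is in fact strong, and a standard $\mathscr{E}^1$-bootstrap (testing with $-\Delta(\partial_t u+\varepsilon u)$ and invoking Gronwall's inequality, exactly in the spirit of Step 3(i)--(ii) in the proof of Theorem \ref{ZFT5}) keeps $\xi_v(t)\in\mathscr{E}^1$ for all $t\ge T$; in particular $\xi_v(0)\in\mathscr{E}^1$.

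The last step is to identify $\xi_u(0)$ with $\xi_v(0)$. By Corollary \ref{ZFC1}, $\xi_u$ is the weak-$\ast$ limit of Galerkin approximations $\xi_{u_k}^{(k)}$, each of which is an S--S solution obeying the Strichartz estimate \eqref{Z3.1} with a bound depending only on $\|\xi_{u_k}^{(k)}(t_k)\|_{\mathscr{E}}$ and $\|h\|$; since these initial norms are uniformly bounded by $R_0$, passing to the weak limit together with weak lower semicontinuity of the $L^4L^{12}$-norm yields $u\in L^4_{loc}([T,\infty);L^{12})$, so that $\xi_u|_{[T,\infty)}$ is itself an S--S solution with datum $\eta$ at time $T$. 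The uniqueness part of Theorem \ref{ZFT1} then forces $\xi_u\equiv\xi_v$ on $[T,\infty)$, and evaluating at $t=0$ gives $\xi_0=\xi_v(0)\in\mathscr{E}^1$.

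The main obstacle is precisely this identification step: verifying that the weak-$\ast$ limit $\xi_u$ inherits a local Strichartz bound so that the S--S uniqueness in Theorem \ref{ZFT1} can be invoked to glue the known strong backward solution $v$ to the (a priori only distributional) forward continuation of $u$. Once this is in place, the forward $\mathscr{E}^1$-bootstrap for strong solutions, and hence the inclusion $\mathscr{A}_w\subset\mathscr{E}^1$, follow routinely.
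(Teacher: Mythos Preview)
Your overall outline---use Theorem \ref{ZFT5} to obtain $\eta=\xi_u(T)\in\mathscr{E}^1$, run the strong solution $\bar u$ forward from $\eta$, and then identify $u$ with $\bar u$---is the same as the paper's. The gap is entirely in the identification step. First a minor confusion: the approximants $u_k^{(k)}$ supplied by Corollary \ref{ZFC1} are \emph{Galerkin} solutions of the projected equation \eqref{Z3.10}, not S--S solutions of \eqref{1.1}; the Strichartz estimate \eqref{Z3.1} does not apply to them. You could repair this by using the S--S approximants $u_n$ from Theorem \ref{ZFT4} instead, and then a uniform bound on $\|u_n\|_{L^4([T,0];L^{12})}$ does follow from \eqref{Z3.1} applied with data $\xi_{u_n}(T)\in\mathcal{X}$. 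But the serious problem is the next inference: having $u\in L^4_{\mathrm{loc}}([T,\infty);L^{12})$ does \emph{not} make $\xi_u|_{[T,\infty)}$ an S--S solution, because you have not shown that $u$ is a \emph{weak solution} of \eqref{1.1} on $[T,\infty)$. Under the weak convergence $\partial_t u_n\rightharpoonup\partial_t u$ one only has $\|\partial_t u(t)\|\le\liminf_n\|\partial_t u_n(t)\|$, so the nonlocal coefficient $\mathcal{J}(\|\partial_t u_n\|^2)$ need not converge to $\mathcal{J}(\|\partial_t u\|^2)$ and the damping term does not pass to the limit in the equation. This is precisely the obstruction highlighted in the introduction, and without the weak-solution property the uniqueness statement in Theorem \ref{ZFT1} is unavailable.

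The paper circumvents this by never claiming that $u$ solves \eqref{1.1} on $[T_0,T_1]$. Instead it compares the Galerkin approximants $u_k^{(k)}$ directly with the projections $\bar u_k=\mathrm{P}_k\bar u$ via an energy estimate for $Z_k=u_k^{(k)}-\bar u_k$ (equations \eqref{ZF4.45}--\eqref{ZHOU4.51}), in the spirit of Step~4 of Theorem \ref{ZFT5}. The new difficulty on $[T_0,T_1]$ is that one cannot make $\|\partial_t\bar u_k\|_{L^\infty}$ small by shrinking the time interval; this is handled by inserting the auxiliary term $\ell(-\Delta)^{-1}Z_k$ and choosing $\ell$ large (legitimate now because $\bar u$ is fixed independently of $\ell$), which allows the lower-order terms $\mathcal{Q}^{(4)}_{Z_k,\bar u_k}$ and the $\|\partial_t\bar u_k\|_{L^\infty}$-contribution to be absorbed. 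Passing $k\to\infty$ then yields the closed Gronwall inequality \eqref{ZHOU4.55} for $\|\xi_u-\xi_{\bar u}\|_{\mathscr{E}}^2$ with zero data at $T_0$, forcing $u\equiv\bar u$ on $[T_0,T_1]$. Your proposal is missing exactly this mechanism.
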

\begin{proof}
Let $\xi_{u}$ denote the complete trajectory of equation \eqref{1.1}. By applying Theorem \ref{ZFT5}, there exists a time $T_{0}$ such that $\xi_{u}(t)\in\mathscr{E}^{1}$ for all $t\leq T_{0}$. According to Theorem \ref{ZFT1}, there exists an extension $\bar{u}$ for $t\geq T_{0}$ such that $\bar{u}(t)=u(t)$ for $t\leq T_{0}$ and $\bar{u}(t)$ is a S--S solution of Eq. \eqref{1.1} for all $t\in\mathbb{R}$. Consequently, it follows that $\xi_{\bar{u}}(t)\in\mathscr{E}^{1}$ for all $t\in\mathbb{R}$.

We aim to show that $\xi_{\bar{u}}(t)=\xi_{u}(t)$ for all $t\in\mathbb{R}$. Since $\xi_{u}\in\bar{\mathfrak{E}}((-\infty,\infty))$, we apply Corollary \ref{ZFC1} to deduce
\begin{align}\label{ZHF4.40}
\partial_{t}^{2}u_{k}^{(k)}-\Delta u_{k}^{(k)}+\mathcal{J}\left(\|\partial_{t}u_{k}^{(k)}\|^{2}\right)\partial_{t}u_{k}^{(k)}
+\mathrm{P}_{k}g\left(u_{k}^{(k)}\right)=
\mathrm{P}_{k}h,~
\xi_{u_{k}}^{(k)}(t_{k})=\xi_{k}^{(0)}\in\mathcal{X},
\end{align}
where $t\geq t_{k}$ and $\lim\limits_{k\rightarrow\infty}t_{k}=-\infty$. Clearly, $\bar{u}_{k}=\mathrm{P}_{k}\bar{u}$ satisfies
\begin{align}\label{ZHF4.41}
\partial_{t}^{2}\bar{u}_{k}-\Delta \bar{u}_{k}+\mathcal{J}(\|\partial_{t}\bar{u}\|^{2})\partial_{t}\bar{u}_{k}+\mathrm{P}_{k}g(\bar{u})=\mathrm{P}_{k}h,
\quad\xi_{\bar{u}_{k}}(t_{k})=\mathrm{P}_{k}\xi_{\bar{u}}(t_{k}).
\end{align}
and
\begin{align}\label{ZHOU4.46}
\lim\limits_{k\rightarrow\infty}\|\xi_{\bar{u}_{k}}-\xi_{\bar{u}}\|_{\mathcal{C}_{b}((-\infty,t];\mathscr{E})}=0,
~\forall t\leq T_{1},\quad
\lim\limits_{k\rightarrow\infty}\|\xi_{\bar{u}_{k}}-\xi_{\bar{u}}\|_{\mathcal{C}_{b}((-\infty,T_{1}]\times\Omega)}=0
\end{align}
with $T_{0}<T_{1}$. Let $Z(t)=u(t)-\bar{u}(t)$, $Z_{k}(t)=u_{k}^{(k)}(t)-\bar{u}_{k}$. Combining \eqref{ZHF4.40} and \eqref{ZHF4.41}, we deduce
\begin{align}\label{ZF4.45}
\nonumber&\partial_{t}^{2}Z_{k}-\Delta Z_{k}
+\Gamma_{2}(t)\partial_{t}Z_{k}+\ell(-\Delta)^{-1}Z_{k}
\\\nonumber&\qquad+\Gamma_{1}(t)(\|\partial_{t}u_{k}^{(k)}\|^{2}-\|\partial_{t}\bar{u}_{k}\|^{2})
(\partial_{t}u_{k}^{(k)}+\partial_{t}\bar{u}_{k})
+\mathrm{P}_{k}(g(u_{k}^{(k)})-g(\bar{u}_{k}))
\\=&\tilde{G}_{k}(t):=G_{k}(t)+\ell(-\Delta)^{-1}Z_{k},
\end{align}
where $\Gamma_{1}(t)$, $\Gamma_{2}(t)$ and $G_{k}(t)$ are defined in a manner analogous to that in \eqref{ZHOU4.35}. Using the
multiplier $\partial_{t}Z_{k}+\varepsilon Z_{k}$ in \eqref{ZF4.34}, and following a similar approach to that used in \eqref{ZHOU4.37}, we obtain: 
\begin{align}\label{ZF4.46}
\frac{d}{dt}\mathcal{E}_{Z_{k},\bar{u}_{k}}(t)
+\varepsilon\mathcal{E}_{Z_{k},\bar{u}_{k}}(t)+\sum_{i=1}^{2}\mathcal{Q}^{(i)}_{Z_{k},\bar{u}_{k}}(t)
=\sum_{i=1}^{2}\mathcal{G}^{(i)}_{Z_{k},\bar{u}_{k}}(t)+\tilde{\mathcal{G}}^{(3)}_{Z_{k}}(t),
\end{align}
where $\mathcal{Q}^{(i)}_{Z_{k},\bar{u}_{k}}(t)$ and $\mathcal{G}^{(i)}_{Z_{k},\bar{u}_{k}}(t)$ ($i=1,2$) are defined as in \eqref{ZHOU4.38}, and
\begin{align}\label{ZF4.47}
\tilde{\mathcal{G}}^{(3)}_{Z_{k}}(t)=
2\langle \tilde{G}_{k},\partial_{t}Z_{k}+\varepsilon Z_{k}\rangle.
\end{align}
Arguing as in the derivation of \eqref{ZF4.36}, we can choose
\begin{align}\label{ZHOU4.50}
\varepsilon=\{1,\frac{\lambda_{1}}{4},\frac{J_{0}}{32},\frac{\sqrt[3]{\lambda_{1}J_{0}}}{4},\frac{3\lambda_{1}J_{0}}{16\mathcal{J}^{2}(R_{0}^{2})}\}
\end{align}
sufficiently small to ensure that
\begin{align}\label{ZHOU4.51}
\frac{d}{dt}\mathcal{E}_{Z_{k},\bar{u}_{k}}(t)+\varepsilon\mathcal{E}_{Z_{k},\bar{u}_{k}}(t)
\leq \mathcal{I}_{Z_{k},\bar{u}_{k}}(t),
\end{align}
where
\begin{align*}
\nonumber\mathcal{I}_{Z_{k},\bar{u}_{k}}(t)&=C_{7}\|\tilde{G}_{k}(t)\|^{2}
-\varepsilon C(\kappa_{2},q)\|Z_{k}\|_{L^{q+1}}^{q+1}-\mathcal{Q}^{(4)}_{Z_{k},\bar{u}_{k}}(t)+
\\&\qquad+C_{6}\|\partial_{t}\bar{u}_{k}\|_{L^{\infty}}\langle |Z_{k}|^{2}(1+|\bar{u}_{k}|^{q-2}+|Z_{k}|^{q-2}),1\rangle,
\\\nonumber\mathcal{Q}^{(4)}_{Z_{k},\bar{u}_{k}}(t)&=
\frac{3J_{0}}{4}\|\partial_{t}Z_{k}\|^{2}+\frac{\varepsilon}{4}\|Z_{k}\|_{\mathcal{H}^{1}}^{2}
+\frac{7\varepsilon}{8}\ell\|Z_{k}\|_{\mathcal{H}^{-1}}^{2}
-\varepsilon\mathcal{J}(\|\partial_{t}\bar{u}_{k}\|^{2})\|\partial_{t}Z_{k}\|\|Z_{k}\|
\\&\qquad-4\varepsilon\Gamma_{1}(t)(\|\partial_{t}u_{k}^{(k)}\|^{2}+\|\partial_{t}\bar{u}_{k}\|^{2})\|\partial_{t}Z_{k}\|\|Z_{k}\|.
\end{align*}
Using \eqref{ZHOU4.43} and \eqref{ZHOU4.46}, for sufficiently large $k$, we have
\begin{align*}
\|\partial_{t}\bar{u}_{k}(t)\|^{2}
\leq \mathcal{Q}(\|h\|)~\text{\rm and }~
\|\partial_{t}\bar{u}_{k}(t)\|^{2}+\|\partial_{t}u_{k}^{(k)}\|^{2}\leq M_{1}:=R_{0}^{2}+\mathcal{Q}(\|h\|),~
\forall t\leq T_{1}.
\end{align*}
Unlike in the case of \eqref{ZF4.36}, we cannot reduce the time interval $t\in(-\infty,T_{1}]$. However, because the function $\bar{u}$ is now independent of the parameter $\ell$, we can choose
\begin{align*}
\ell=4\kappa_{1}^{2}+(J_{M_{1}}+J'_{M_{1}}M_{1})^{4}
+(M_{2}+M_{2}M_{1}^{q-2}C^{-1}(\kappa_{2},q)+1)^{2}\varepsilon^{-3},
\end{align*}
where $J_{M_{1}}=\mathcal{J}(M_{1})$, $J'_{M_{1}}=\sup\limits_{s\in[0,M_{1}]}\mathcal{J}'(s)$ and $M_{2}=C_{6}\sqrt{M_{1}}$, such that
\begin{align}\label{ZHOU4.52}
\mathcal{I}_{Z_{k},\bar{u}_{k}}(t)
\leq C_{7}\|\tilde{G}_{k}(t)\|^{2},~~\forall t\leq T_{1}.
\end{align}
Applying Gronwall's inequality to the identity \eqref{ZF4.46}, and using \eqref{ZHOU4.52} along with the fact that $u(t)=\bar{u}(t)$ for $t\leq T_{0}$, we derive estimate
\begin{align}\label{ZHOU4.53}
\mathcal{E}_{Z_{k},\bar{u}_{k}}(t)
\leq
C_{7}\int_{T_{0}}^{t}
e^{-\varepsilon(t-s)}\|\tilde{G}_{k}(s)\|^{2}ds.
\end{align}
Since the term $\ell(-\Delta)^{-1}Z_{k}$ in $\tilde{G}$ converges as
\begin{align*}
\ell(-\Delta)^{-1}Z_{k}\rightarrow \ell(-\Delta)^{-1}Z\text{ strongly in }\mathcal{C}_{loc}((-\infty,T_{1}];L^{2}),
\end{align*}
if follows that
\begin{align}\label{ZHOU4.54}
\tilde{G}_{k}(t)\rightarrow \ell(-\Delta)^{-1}Z
~&\text{strongly in }~\mathcal{C}_{loc}((-\infty,T_{1}];L^{2}).
\end{align}
Taking the limit as $k\rightarrow\infty$ in \eqref{ZHOU4.53}, using \eqref{ZHOU4.50}, \eqref{ZHOU4.54} and
\begin{align*}
\|\tilde{G}_{k}\|_{\mathcal{C}_{loc}((-\infty,T_{1}];L^{2})}\leq C
\end{align*}
with $C$ independent of $k$, we derive
\begin{align}\label{ZHOU4.55}
\|\xi_{u}(t)-\xi_{\bar{u}}(t)\|_{\mathscr{E}}^{2}
\leq 2C_{7}\ell^{2}\int_{T_{0}}^{t}e^{-\varepsilon(t-s)}\|(-\Delta)^{-1}(u(s)-\bar{u}(s))\|^{2}ds,~\forall t\in[T_{0},T_{1}].
\end{align}
Applying Gronwall's inequality to \eqref{ZHOU4.55} and noting that $u(T_{0})=\bar{u}(T_{0})$ we conclude that $u(t)=\bar{u}(t)$ on any interval $[T_{0},T_{1}]$, thereby completing the proof.
\end{proof}
\begin{remark}\label{ZFR4.4}

The proof of Theorem \ref{ZFT6} shows that for any $\xi_{u}\in\bar{\mathfrak{E}}((-\infty,\infty))$, $\xi_{u}$ is the S--S solution of Eq. \eqref{1.1}, which implies that $\bar{\mathfrak{E}}((-\infty,\infty))=\mathfrak{E}((-\infty,\infty))$. Furthermore, we have $
\xi_{u}(t)\in\mathscr{E}^{1} \text{~for~all~}t\in\mathbb{R}$. However, the boundedness of $\xi_u(t)$ in the $\mathscr{E}^1$--norm has not yet been established, and consequently, we cannot directly ascertain the strong attractor $\mathscr{A}_{s}$.
\end{remark}
\subsection{Asymptotic compactness}
\begin{lemma}\label{ZFL4.5}
Assume that the Assumption \ref{A1.1} be in force and assume further $\mathcal{J}(0)>0$, then the semigroup $(S(t),\mathscr{E})$ given by \eqref{Z3.2} associated with Eq. \eqref{1.1} is asymptotically compact, that is for every sequence $\left\{\xi_n\right\}_{n=1}^{\infty} \subset \mathcal{X}$, and every sequence of times $t_n \rightarrow \infty$, there exists a subsequence $n_{k}$ such that
\begin{align}
S\left(t_{n_k}\right) \xi_{n_k}\rightarrow \xi \text { strongly in } \mathscr{E}.
\end{align}
\end{lemma}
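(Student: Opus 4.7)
The plan is to implement Ball's energy method on a time-shifted sequence, exploiting the backward regularity of the weak limit guaranteed by Theorem \ref{ZFT5} together with the identification $\bar{\mathfrak{E}}((-\infty,\infty)) = \mathfrak{E}((-\infty,\infty))$ from Remark \ref{ZFR4.4}.

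\emph{Step 1 (time translation and extraction of a weak-limit complete trajectory).} For the given sequence $\xi_n \in \mathcal{X}$ and $t_n \to \infty$, define the shifted trajectories $u_n(t) := S(t+t_n)\xi_n$ for $t \geq -t_n$, so that $\xi_{u_n}(0) = S(t_n)\xi_n$ is exactly what we need to control. For $n$ large enough (so that $t_n \geq T(\mathds{B}_0)$), Theorem \ref{ZFT2} guarantees that the tails $\xi_{u_n}|_{[-T,\infty)}$ lie in $\mathfrak{E}([-T,\infty))$ for each fixed $T>0$. Invoking the $A_1$-property proved in Theorem \ref{ZFT3} and a standard diagonalization over $T\in\mathbb{N}$, pass to a subsequence (not relabelled) such that $\xi_{u_n} \to \xi_u$ in $\mathcal{C}([-T,\infty);\mathcal{X}_w)$ for every $T>0$. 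By Theorem \ref{ZFT4} and Remark \ref{ZFR4.4}, the limit satisfies $\xi_u \in \bar{\mathfrak{E}}((-\infty,\infty)) = \mathfrak{E}((-\infty,\infty))$, so $\xi_u$ is a globally defined S--S solution of \eqref{1.1}.

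\emph{Step 2 (backward regularity of the limit).} By Theorem \ref{ZFT5} (and its remark, inequality \eqref{ZHOU4.43}), there exists $T_\star = T_\star(u)$ such that $\|\xi_u(t)\|_{\mathscr{E}^1} \leq \mathcal{Q}(\|h\|)$ for all $t \leq T_\star$; in particular $u(t)\in\mathcal{H}^2\hookrightarrow \mathcal{C}(\bar{\Omega})$ is uniformly bounded in this range. Interpolating the $\mathcal{H}^1$-bound of $\partial_t u$ with the decay $\|\partial_t u(t)\|_{\mathcal{H}^{-\varsigma}}\to 0$ as $t\to -\infty$ from Proposition \ref{ZFP2} gives $\|\partial_t u(t)\|\to 0$ and hence $\int_{-\infty}^0 \mathcal{J}(\|\partial_t u\|^2)\|\partial_t u\|^2\,dr < \infty$, with $\mathcal{E}_u(-T)$ having a finite limit as $T\to\infty$.

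\emph{Step 3 (Ball's energy method).} For each fixed large $T$ and for $n$ large so that $-T\geq -t_n$, the energy equality \eqref{Zfeng3.1} applied on $[-T,0]$ yields
\begin{align*}
\mathcal{E}_{u_n}(0) = \mathcal{E}_{u_n}(-T) - 2\int_{-T}^{0}\mathcal{J}(\|\partial_t u_n\|^2)\|\partial_t u_n\|^2\,dr,
\end{align*}
and analogously for $u$. Pointwise weak convergence $\partial_t u_n(r)\rightharpoonup \partial_t u(r)$ in $L^2$ at each $r$, together with the monotonicity of the map $s\mapsto \mathcal{J}(s)s$ and Fatou's lemma, provides
\begin{align*}
\liminf_{n\to\infty}\int_{-T}^{0}\mathcal{J}(\|\partial_t u_n\|^2)\|\partial_t u_n\|^2\,dr \geq \int_{-T}^{0}\mathcal{J}(\|\partial_t u\|^2)\|\partial_t u\|^2\,dr.
\end{align*}
Using the $\mathscr{E}^1$-boundedness of $\xi_u(-T)$ (Step 2), for $T$ large the initial datum $\xi_u(-T)$ is compact and smooth, which — combined with weak convergence $\xi_{u_n}(-T)\rightharpoonup \xi_u(-T)$ in $\mathscr{E}$ and a diagonal argument letting $T\to\infty$ — should yield $\limsup_n \mathcal{E}_{u_n}(-T) \leq \mathcal{E}_u(-T) + o_T(1)$. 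Combining these three ingredients with the two energy identities gives $\limsup_n \mathcal{E}_{u_n}(0)\leq \mathcal{E}_u(0)$, while weak lower semicontinuity of $\|\cdot\|_{\mathscr{E}}^2$ together with (Brezis--Lieb type) handling of the potential term gives $\mathcal{E}_u(0)\leq \liminf_n \mathcal{E}_{u_n}(0)$. Hence $\mathcal{E}_{u_n}(0)\to \mathcal{E}_u(0)$. Since $\langle h,u_n(0)\rangle\to\langle h,u(0)\rangle$ and $\langle G(u_n(0)),1\rangle\to\langle G(u(0)),1\rangle$ (Rellich plus Brezis--Lieb for $q=5$), this forces $\|\xi_{u_n}(0)\|_{\mathscr{E}}\to\|\xi_u(0)\|_{\mathscr{E}}$, which combined with weak convergence in the Hilbert space $\mathscr{E}$ gives the desired strong convergence $S(t_n)\xi_n\to \xi_u(0)$.

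\emph{Main obstacle.} The hardest point is the critical quintic term: the compact embedding $\mathcal{H}^1\Subset L^p$ only holds for $p<6$, so the convergence $\int G(u_n(0))\to\int G(u(0))$ is not automatic. This is resolved by a Brezis--Lieb decomposition $\int G(u_n(0)) = \int G(u(0)) + \int G(u_n(0)-u(0)) + o(1)$, propagating the matching of energies at $t=-T$ (where $u(-T)\in\mathcal{H}^2\hookrightarrow L^\infty$ is smooth) forward to $t=0$ through the energy identity. An equally delicate point is that the Fatou inequality for the nonlocal damping depends on the monotonicity of $\mathcal{J}$, which was specifically posited in Assumption \ref{A1.1}(J); this is what makes the nonlocal coefficient $\mathcal{J}(\|\partial_t u_n\|^2)$ compatible with the energy-method framework.
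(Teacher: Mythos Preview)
Your overall architecture---time-shift, extract a complete trajectory in $\bar{\mathfrak{E}}((-\infty,\infty))=\mathfrak{E}((-\infty,\infty))$, then run an energy method against the limit---matches the paper. The backward regularity of the limit and the Fatou argument for the nonlocal dissipation are also used in the same way. However, Step~3 contains a genuine gap that the paper handles by a different (and essential) device.

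The unjustified claim is
\[
\limsup_{n\to\infty}\mathcal{E}_{u_n}(-T)\ \leq\ \mathcal{E}_u(-T)+o_T(1).
\]
Weak convergence $\xi_{u_n}(-T)\rightharpoonup\xi_u(-T)$ in $\mathscr{E}$ gives only the \emph{lower} bound $\mathcal{E}_u(-T)\le\liminf_n\mathcal{E}_{u_n}(-T)$ (via weak lower semicontinuity of the norm and Fatou for $G$). The fact that the \emph{limit} $\xi_u(-T)$ sits in $\mathscr{E}^1$ says nothing about the energy of the \emph{sequence} $\xi_{u_n}(-T)$, which is only known to lie in $\mathcal{X}$. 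No diagonal argument in $T$ produces an upper bound: for each fixed $T$ the gap $\limsup_n\mathcal{E}_{u_n}(-T)-\mathcal{E}_u(-T)$ can stay bounded away from zero, since the dissipation of $u_n$ on $[-t_n,-T]$ need not be captured by the weak limit. Your ``should yield'' is precisely where the plain (unweighted) Ball method breaks down for this problem.

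The paper circumvents this by \emph{not} comparing energies at a finite backward time. Instead it multiplies \eqref{1.1} by $\partial_t u_n+\varrho u_n$ with a small parameter $\varrho>0$, producing a modified functional $\mathcal{E}^{\varrho}_{u_n}$ satisfying
\[
\frac{d}{dt}\mathcal{E}^{\varrho}_{u_n}+\tfrac{\varrho}{4}\mathcal{E}^{\varrho}_{u_n}+\cdots=0,
\]
and integrates with the weight $e^{\varrho s/4}$ on $[-t_n,0]$. The crucial gain is that the initial contribution $\mathcal{E}^{\varrho}_{u_n}(-t_n)e^{-\varrho t_n/4}\to 0$ automatically, so no matching at $-T$ is ever needed. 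The price is extra ``cross'' terms of size $O(\varrho)+O(\delta)$ (coming from $\varrho\mathcal{J}(\|\partial_t u_n\|^2)\langle\partial_t u_n,u_n\rangle$ and the like), which are bounded uniformly using the finite total dissipation $\int\mathcal{J}(\|\partial_t u_n\|^2)\|\partial_t u_n\|^2<\infty$ and the absorbing-set bound; these errors are then removed by letting $\varrho\to 0$ and $\delta\to 0$ after $n\to\infty$. This yields exactly $\liminf_n\mathcal{E}_{u_n}(0)=\mathcal{E}_u(0)$, from which the strong convergence follows as you outline in your last paragraph. To repair your argument you should replace the plain energy equality on $[-T,0]$ by this $\varrho$-weighted identity on $[-t_n,0]$.
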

\begin{proof}
Let us denote $\xi_{u_{n}}(t)=S(t+t_{n})\xi_{n}$ the corresponding S--S solutions with $t_n \rightarrow \infty$, then $u_{n}$ solves
\begin{align}\label{ZHOU4.57}
\partial_{t}^{2}u_{n}-\Delta u_{n}+\mathcal{J}(\|\partial_{t}u_{n}(t)\|^{2})\partial_{t}u_{n}+g(u_{n})=h,
~t\geq-t_{n}\text{ and }\xi_{u_{n}}(-t_{n})=\xi_{n}\in\mathcal{X}.
\end{align}
We recall that $\xi_{u_{n}}$ is uniformly bounded in $\mathcal{C}([-t_{n},\infty),\mathscr{E})$, then we get that
\begin{align}\label{ZHOU4.58}
\xi_{u_{n}}\rightharpoonup \xi_{u}, \quad\text{in~}\mathcal{C}_{loc}(\mathbb{R},\mathscr{E}_{w})
\end{align}
and $\xi_{u}\in\bar{\mathfrak{E}}((-\infty,\infty))=\mathfrak{E}((-\infty,\infty))$ and $\xi_{u}$ is the S--S solution of Eq. \eqref{1.1} by recalling Theorem \ref{ZFT6} or Remark \ref{ZFR4.4}. In addition, we also know that $\xi_{u_{n}}(0)\rightharpoonup\xi_{u}(0)$ weakly in $\mathscr{E}$. Taking the $L^{2}$--inner product between \eqref{ZHOU4.57} and $\partial_{t}u_{n}+\varrho u_{n}$ ($0<\varrho\ll1$), we derive the following energy type identity
\begin{align}\label{ZHOU4.59}
\frac{d}{dt}\mathcal{E}_{u_{n}}^\varrho(t)+\frac{\varrho}{4}\mathcal{E}_{u_{n}}^\varrho(t)
+\mathcal{Q}_{u_{n}}^\varrho(t)+\mathcal{G}_{u_{n}}^\varrho(t)+\mathcal{I}_{u_{n}}^\varrho(t)=0,
\end{align}
where
\begin{align*}
\mathcal{E}_{u_{n}}^\varrho(t)&=
\|\xi_{u_{n}}\|_{\mathscr{E}}^{2}+2\langle G(u_{n}),1\rangle
+\varrho\langle\partial_{t}u_{n},u_{n}\rangle-2\langle h,u_{n}\rangle,
\nonumber\\\mathcal{Q}_{u_{n}}^\varrho(t)&=\big[2\mathcal{J}(\|\partial_{t}u_{n}\|^{2})
-\frac{5\varrho}{4}\big]\|\partial_{t}u_{n}\|^{2}
+\frac{3\varrho}{4}\|u_{n}\|_{\mathcal{H}^{1}}^{2}
\\\nonumber&\quad+\varrho\mathcal{J}(\|\partial_{t}u_{n}\|^{2})\langle\partial_{t}u_{n},u_{n}\rangle
-\frac{\varrho^{2}}{4}\langle\partial_{t}u_{n},u_{n}\rangle,
\\\mathcal{G}_{u_{n}}^\varrho(t)&=\varrho\langle g(u_{n}),u_{n}\rangle-\frac{\varrho}{2}\langle G(u_{n}),1\rangle,
\quad\mathcal{I}_{u_{n}}^\varrho(t)=-\frac{\varrho}{2}\langle h,u_{n}\rangle.
\end{align*}
Now, integrating Eq. \eqref{ZHOU4.59} with respect to $t\in[-t_{n},0]$, to deduce that
\begin{align}\label{ZHOU4.60}
\mathcal{E}_{u_{n}}^\varrho(0)+\int_{-t_{n}}^{0}e^{\frac{\varrho}{4} s}
\left(\mathcal{Q}_{u_{n}}^\varrho(s)+\mathcal{G}_{u_{n}}^\varrho(s)
+\mathcal{I}_{u_{n}}^\varrho(s)\right)ds
=\mathcal{E}_{u_{n}}^\varrho(-t_{n})e^{-\frac{\varrho}{4}t_{n}}.
\end{align}
In order to pass the limit $n \rightarrow \infty$, we deal with the terms in \eqref{ZHOU4.60} one by one.

Firstly, recalling \eqref{1.5}, we observe that
\begin{align}\label{ZHOU4.61}
\mathcal{G}_{u_{n}}^\varrho(t)\geq C(\kappa_{3},\kappa_{5},|\Omega|):=-7(\kappa_{5}+\kappa_{3})|\Omega|.
\end{align}
Applying the compact embedding $\mathcal{C}_{loc}((-\infty,0];\mathscr{E})\Subset\mathcal{C}_{loc}((-\infty,0];L^{2})$, we obtain
\begin{align}\label{ZHOU4.62}
u_{n}\rightarrow u \text{ strongly in } \mathcal{C}_{loc}((-\infty,0];L^{2}),
\text{ including }u_{n}\rightarrow u,~a.e.
\end{align}
By Fatou's lemma, taking the limit as $n\rightarrow \infty$, we have
\begin{align}\label{ZHOU4.63}
\liminf\limits_{n\rightarrow\infty}\int_{-t_{n}}^{0}e^{\frac{\varrho}{4} s}\mathcal{G}_{u_{n}}^\varrho(s)ds
\geq\int_{-\infty}^{0}e^{\frac{\varrho}{4} s}\mathcal{G}_{u}^\varrho(s)ds.
\end{align}

Secondly, combining \eqref{1.5} with \eqref{ZHOU4.62}, and applying Fatou's lemma alongside the weak lower semicontinuity of the norm, we derive:
\begin{align}\label{ZHOU4.64}
\liminf\limits_{n\rightarrow\infty}\mathcal{E}_{u_{n}}^\varrho(0)\geq\mathcal{E}_{u}^\varrho(0),\quad
\liminf\limits_{n\rightarrow\infty}\int_{t_{n}}^{0}e^{\frac{\varrho}{4} s}\mathcal{I}_{u_{n}}^\varrho(s)ds
\geq\int_{-\infty}^{0}e^{\frac{\varrho}{4} s}\mathcal{I}_{u}^\varrho(s)ds.
\end{align}

Finally, we deal with the remainder term $\mathcal{Q}_{u_{n}}^\varrho$. Denote
\begin{align}\label{ZHOU4.65}
\mathcal{Q}_{u_{n}}^\varrho(t)
=\mathcal{Q}_{u}^\varrho(t)+
\mathcal{R}_{u_{n}}^\varrho(t)+\mathcal{P}_{u_{n}}^\varrho(t),
\end{align}
where
\begin{align*}
\mathcal{R}_{u_{n}}^\varrho(t)
=&2\left(\mathcal{J}(\|\partial_{t}u_{n}\|^{2})\|\partial_{t}u_{n}\|^{2}
-\mathcal{J}(\|\partial_{t}u\|^{2})\|\partial_{t}u\|^{2}\right)
\\&\qquad+\frac{3\varrho}{4}\left(\|u_{n}\|_{\mathcal{H}^{1}}^{2}
-\|u\|_{\mathcal{H}^{1}}^{2}\right)
+\frac{\varrho^{2}}{4}\left(\langle\partial_{t}u,u\rangle-\langle\partial_{t}u_{n},u_{n}\rangle\right),
\end{align*}
and
\begin{align*}
\mathcal{P}_{u_{n}}^\varrho(t)=&\varrho\left(\mathcal{J}(\|\partial_{t}u_{n}\|^{2})\langle\partial_{t}u_{n},u_{n}\rangle
-\mathcal{J}(\|\partial_{t}u\|^{2})\langle\partial_{t}u,u\rangle\right)
\\&\qquad+\frac{5\rho}{4}(\|\partial_{t}u\|^{2}-\|\partial_{t}u_{n}\|^{2}).
\end{align*}
By applying Young's inequality, we obtain
\begin{align}\label{ZHOU4.66}
\nonumber&\varrho\left(\mathcal{J}(\|\partial_{t}u_{n}\|^{2})\langle\partial_{t}u_{n},u_{n}\rangle
-\mathcal{J}(\|\partial_{t}u\|^{2})\langle\partial_{t}u,u\rangle\right)
\\\leq&\varrho C_{\lambda_{1}\delta}\mathcal{J}^{2}(R_{0}^{2})
\left(\|\partial_{t}u_{n}\|^{2}+\|\partial_{t}u\|^{2}\right)
+\varrho\delta\left(\|u_{n}\|_{\mathcal{H}^{1}}^{2}+\|u\|_{\mathcal{H}^{1}}^{2}\right),
\end{align}
and using \eqref{Z3.4} and \eqref{Z3.11}, we find
\begin{align}\label{ZHOU4.67}
\int_{-t_{n}}^{0}e^{\frac{\varrho}{4} s}
\left|\mathcal{P}_{u_{n}}^\varrho(s)\right|ds
\leq\varrho C_{8}\mathcal{Q}(\|h\|^{2})
+8R_{0}^{2}\delta, \quad \forall \delta>0,
\end{align}
here $C_{8}=\frac{5}{2}+C_{\lambda_{1}\delta}\mathcal{J}^{2}(R_{0}^{2})$. Combining \eqref{ZHOU4.65} with \eqref{ZHOU4.67} and utilizing the weak lower semicontinuity of the norm, we obtain
\begin{align}\label{ZHOU4.68}
\liminf\limits_{n\rightarrow\infty}
\int_{-t_{n}}^{0}e^{\frac{\varrho}{4} s}
\mathcal{Q}_{u_{n}}^\varrho(t)ds
\geq\int_{-\infty}^{0}e^{\frac{\varrho}{4} s}\mathcal{Q}_{u}^\varrho(s)ds
-\varrho C_{8}\mathcal{Q}(\|h\|^{2})
-8R_{0}^{2}\delta.
\end{align}

On the other hand, according to Theorem \ref{ZFT6}, $u$ is the S--S solution of problem \eqref{1.1} with enhanced regularity in $\mathscr{E}^{1}$, and clearly, $u$ satisfies the energy equality. By replicating the derivation of \eqref{ZHOU4.60} for the solution $u$, we obtain the energy equality:
\begin{align}\label{ZHOU4.69}
\mathcal{E}_{u}^\varrho(0)+\int_{-\infty}^{0}e^{\frac{\varrho}{4} s}
\left(\mathcal{Q}_{u}^\varrho(s)+\mathcal{G}_{u}^\varrho(s)
+\mathcal{I}_{u}^\varrho(s)\right)ds
=0.
\end{align}
Returning now to \eqref{ZHOU4.60}, and taking the limit as $n \rightarrow \infty$ in equality \eqref{ZHOU4.60}, we use \eqref{ZHOU4.63}, \eqref{ZHOU4.64}, \eqref{ZHOU4.68} and \eqref{ZHOU4.69} to deduce:
\begin{align*}
\nonumber0&\geq\liminf\limits_{n\rightarrow\infty}\left(
\mathcal{E}_{u_{n}}^\varrho(0)+\int_{-t_{n}}^{0}e^{\frac{\varrho}{4} s}
\left(\mathcal{Q}_{u_{n}}^\varrho(s)+\mathcal{G}_{u_{n}}^\varrho(s)
+\mathcal{I}_{u_{n}}^\varrho(s)\right)ds\right)
\\\nonumber&\geq\liminf\limits_{n\rightarrow\infty}
\mathcal{E}_{u_{n}}^\varrho(0)
+\int_{-\infty}^{0}e^{\frac{\varrho}{4} s}
\left(\mathcal{Q}_{u}^\varrho(s)+\mathcal{G}_{u}^\varrho(s)
+\mathcal{I}_{u}^\varrho(s)\right)ds
-\varrho C_{8}\mathcal{Q}(\|h\|^{2})
-8R_{0}^{2}\delta
\\&\geq\liminf\limits_{n\rightarrow\infty}
\mathcal{E}_{u_{n}}^\varrho(0)-\mathcal{E}_{u}^\varrho(0)
-\varrho C_{8}\mathcal{Q}(\|h\|^{2})
-8R_{0}^{2}\delta.
\end{align*}
Thus, we conclude:
\begin{align*}
\mathcal{E}_{u}^\varrho(0)
\leq \liminf\limits_{n\rightarrow\infty}
\mathcal{E}_{u_{n}}^\varrho(0)
\leq \mathcal{E}_{u}^\varrho(0)+\varrho C_{8}\mathcal{Q}(\|h\|^{2})
+8R_{0}^{2}\delta.
\end{align*}
Taking the limit as $\varrho\rightarrow0$, we obtain
\begin{align*}
\mathcal{E}_{u}(0)
\leq \liminf\limits_{n\rightarrow\infty}
\mathcal{E}_{u_{n}}(0)
\leq \mathcal{E}_{u}(0)+8R_{0}^{2}\delta
\end{align*}
for any $\delta>0$, where $\mathcal{E}_{u}(t)=
\|\xi_{u}\|_{\mathscr{E}}^{2}+2\langle G(u),1\rangle-2\langle h,u\rangle$. We take the limit as $\delta\rightarrow0$ to derive:
\begin{align}\label{ZHOU4.70}
\mathcal{E}_{u}(0)
\leq \liminf\limits_{n\rightarrow\infty}
\mathcal{E}_{u_{n}}(0)
\leq \mathcal{E}_{u}(0).
\end{align}
Applying Fatou lemma and weak lower semi-continuous of the norm again, we find that
\begin{align}\label{ZHOU4.71}
\liminf_{n\rightarrow\infty}\langle G(u_{n}(0)),1\rangle\geq\langle G(u(0)),1\rangle,\quad
\liminf_{n\rightarrow\infty}\|\xi_{u_{n}}(0)\|_{\mathscr{E}}^{2}\geq\|\xi_{u}(0)\|_{\mathscr{E}}^{2}.
\end{align}
The equality in \eqref{ZHOU4.70} holds only if inequalities \eqref{ZHOU4.71} are also equalities. Recalling $\xi_{u_{n}}(0)\rightharpoonup\xi_{u}(0)$, we may assume without loss of generality that
\begin{align*}
S(t_{n})\xi_{n}=\xi_{u_{n}}(0)\rightarrow\xi_{u}(0)
\end{align*}
strongly in $\mathscr{E}$. Thus, the asymptotic compactness of the semigroup $S(t)$ is established, completing the proof of the theorem.
\end{proof}
\begin{theorem}\label{ZFT7}
Let the assumptions of Theorem \ref{ZFT5} be satisfied. Then, the solution semigroup $(S(t),\mathscr{E})$ generated by S--S solutions of Eq. \eqref{1.1} possesses a strong global attractor $\mathscr{A}_{s}\subset\mathscr{E}^{1}$. Moreover, we have
\begin{align}\label{ZHOU4.72}
\mathscr{A}_{s}=\mathscr{A}_{w}=\{\xi_{u_{0}}:\xi_{u_{0}}=\xi_{u}(0)\text{~for~some~}\xi_{u}\in\mathfrak{E}((-\infty,\infty))\},
\end{align}
where $\mathscr{A}_{w}$ denotes the weak attractor as defined in Theorem \ref{ZFT3}.
\end{theorem}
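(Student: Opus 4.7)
The plan is to assemble three pieces that have already been established and feed them into the classical attractor existence theorem cited after Definition \ref{DS4.1}. First, the Lipschitz-type estimate \eqref{Z3.3} shows that $S(t):\mathscr{E}\to\mathscr{E}$ is continuous for each fixed $t\geq 0$. Second, Theorem \ref{ZFT2} furnishes the bounded absorbing set $\mathds{B}_{0}$ defined in \eqref{ZHOU3.5}, which we may assume positively invariant. Third, Lemma \ref{ZFL4.5} provides the strong asymptotic compactness: for every $\{\xi_n\}\subset\mathcal{X}$ and $t_n\to\infty$, a subsequence of $S(t_n)\xi_n$ converges strongly in $\mathscr{E}$. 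Together these three properties yield, by the standard abstract theorem, the existence of a strong global attractor $\mathscr{A}_{s}\subset\mathscr{E}$ in the sense of Definition \ref{DS4.1}.

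For the characterization \eqref{ZHOU4.72}, I would first invoke the classical representation of a strong global attractor as the union of all bounded complete trajectories, namely
\begin{align*}
\mathscr{A}_{s}=\{\xi_{u_{0}}\in\mathscr{E}:\xi_{u_{0}}=\xi_{u}(0)\text{ for some complete bounded trajectory }\xi_{u}\text{ of }S(t)\}.
\end{align*}
Since the dissipativity result (Theorem \ref{ZFT2}) forces every complete trajectory contained in $\mathscr{A}_s$ to lie entirely inside the absorbing ball $\mathcal{X}$, such complete trajectories are precisely the elements of $\mathfrak{E}((-\infty,\infty))$ in the sense of Definition \ref{Devolution}. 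This gives
\begin{align*}
\mathscr{A}_{s}=\{\xi_{u_{0}}:\xi_{u_{0}}=\xi_{u}(0)\text{ for some }\xi_{u}\in\mathfrak{E}((-\infty,\infty))\}.
\end{align*}

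To identify $\mathscr{A}_{s}$ with $\mathscr{A}_{w}$, I would apply Theorem \ref{ZFT3} which expresses the weak attractor as $\mathscr{A}_{w}=\{\xi_{u}(0):\xi_{u}\in\bar{\mathfrak{E}}((-\infty,\infty))\}$, together with the crucial identity $\bar{\mathfrak{E}}((-\infty,\infty))=\mathfrak{E}((-\infty,\infty))$ recorded in Remark \ref{ZFR4.4} (which is itself a consequence of Theorem \ref{ZFT6}). Equating the two representations gives $\mathscr{A}_{s}=\mathscr{A}_{w}$ and establishes \eqref{ZHOU4.72}. Finally, the regularity $\mathscr{A}_{s}\subset\mathscr{E}^{1}$ is an immediate corollary of $\mathscr{A}_{s}=\mathscr{A}_{w}$ and Theorem \ref{ZFT6}.

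The real substance of the argument is not in this assembly step but in the supporting results that have already been proven; in particular, the verification that strong asymptotic compactness (Lemma \ref{ZFL4.5}) holds despite the quintic growth of $g$ and the nonlinear nonlocal damping, and the non-trivial identification $\bar{\mathfrak{E}}((-\infty,\infty))=\mathfrak{E}((-\infty,\infty))$ coming from the backward regularity in Theorem \ref{ZFT5}. Given those inputs, the only conceptual care needed here is to ensure that the complete trajectory picture used for $\mathscr{A}_{s}$ matches the evolutionary system definition used for $\mathscr{A}_{w}$, which is immediate from the positive invariance of $\mathds{B}_{0}$ and the semigroup property of $S(t)$.
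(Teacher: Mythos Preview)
Your proposal is correct and follows essentially the same approach as the paper: both combine continuity \eqref{Z3.3}, dissipativity (Theorem~\ref{ZFT2}), and asymptotic compactness (Lemma~\ref{ZFL4.5}) with the abstract attractor existence theorem, and then identify $\mathscr{A}_{s}$ with $\mathscr{A}_{w}$ via the complete-trajectory representation together with the equality $\bar{\mathfrak{E}}((-\infty,\infty))=\mathfrak{E}((-\infty,\infty))$ from Remark~\ref{ZFR4.4}/Theorem~\ref{ZFT6}, deducing $\mathscr{A}_{s}\subset\mathscr{E}^{1}$ from the latter. Your write-up is in fact slightly more explicit than the paper's about why the bounded complete trajectories of $S(t)$ coincide with $\mathfrak{E}((-\infty,\infty))$.
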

\begin{proof}
By Theorem \ref{ZFT2} and Lemma \ref{ZFL4.5}, the semigroup $S(t)$ is dissipative and asymptotically compact. Using \eqref{Z3.3}, we establish that $S(t)$ is continuous in $\mathscr{E}$. Consequently, by applying the abstract attractor existence theorem (refer to \cite{chueshov1, chueshov2}), it follows that $\left(S(t),\mathscr{E}\right)$ possesses a global attractor $\mathscr{A}_{s}$.  Consequently, it follows that $\mathscr{A}_{s}\subset\{\xi_{u_{0}}:\xi_{u_{0}}=\xi_{u}(0)\text{~for~some~}\xi_{u}\in\mathfrak{E}((-\infty,\infty))\}$. On the other hand, by applying Theorem \ref{ZFT6} and Remark \ref{ZFR4.4}, we observe that $\mathfrak{E}((-\infty,\infty))=\bar{\mathfrak{E}}((-\infty,\infty))$ consists of smooth solutions which are the S--S ones. Thus, we obtain $\mathscr{A}_{s}\subset\mathscr{E}^{1}$ and equality \eqref{ZHOU4.72}.
\end{proof}
\begin{remark}
\textbf{(Characterization)}
Let us define the functional $\Phi(\xi_{u}): \mathscr{E}\rightarrow\mathbb{R}$ as $\xi_{u}\mapsto\Phi(\xi_{u})$,
where
$$
\Phi(\xi_{u}):=\mathcal{E}_{u}=\|\xi_{u}\|_{\mathscr{E}}^{2}+2\langle G(u),1\rangle-2\langle h,u\rangle
$$.
It follows directly from \eqref{Zfeng3.1} that the function $t\mapsto\Phi(S(t)\xi_{u_{0}})$ is non-increasing for every $\xi_{u_{0}}\in\mathscr{E}$. Rewriting equation \eqref{Zfeng3.1} yields
\begin{align}\label{Zfeng4.73}
\Phi(S(t)\xi_{u_{0}})+2\int_{0}^{t}\mathcal{J}(\|\partial_{t}u(s)\|^{2})\|\partial_{t}u(s)\|^{2}ds=\Phi(\xi_{u_{0}}),\quad t>0,
\end{align}
for every $\xi_{u_{0}}\in\mathscr{E}$. From this, we can easily deduce that
$$\Phi(S(t)\xi_{u_{0}})=\Phi(\xi_{u_{0}})\Leftrightarrow \xi_{u_{0}}\in\mathscr{N}
, \quad t>0,
$$
where $\mathscr{N}=\{\xi_{u}\in\mathscr{E}:S(t)\xi_{u}=\xi_{u},~\text{for all }t\geq0\}$ denotes the set of stationary points of the dynamical system $(S(t),\mathscr{E})$. Consequently, we have $\mathscr{A}_{s}=\mathscr{M}^{u}(\mathscr{N})$ and the global attractor $\mathscr{A}_{s}$ consists of full trajectories $\Xi=\{\xi_{u}(t): t\in\mathbb{R}\}$ that satisfy
\begin{align*}
\lim _{t \rightarrow\pm\infty} \operatorname{dist}_\mathscr{E}(\xi_{u}(t), \mathscr{N})=0
\end{align*}
as established by \cite[Theorem 2.4.5]{chueshov1}. Here, $\mathscr{M}^{u}(\mathscr{N})$ represents the unstable manifold (see \cite[Definition 2.3.10]{chueshov1}).
\end{remark}

\section{Dynamics of strong solutions}
\subsection{Dissipativity}
In this subsection,  we aim to establish the dissipativity of the solution semigroup $S(t)$ in $\mathscr{E}^{1}$. For any $\delta>0$, let us denote the $\delta$--neighborhood of $\mathscr{A}_{s}$ in $\mathscr{E}$ by
\begin{align}\label{ZHOUF5.1}
\mathcal{B}_\delta:=\left\{\xi \in \mathscr{E}: \operatorname{dist}_{\mathscr{E}}\left(\xi, \mathscr{A}_{s}\right) \leq \delta\right\},
\end{align}
where $\mathscr{A}_{s}$ is the strong global attractor of $S(t)$ established in Theorem \eqref{ZFT7}. Clearly, $\mathcal{B}_\delta$ is a bounded absorbing set for $(S(t),\mathscr{E})$ for any $\delta>0$.

\begin{lemma}\label{ZFL5.1}
Choosing $\delta_0>0$ small enough, then there exist a time $T_{0}:=T\left(\mathscr{A}_{s}\right)>0$ and a constant $C_{9}=C_{9}\left(\mathscr{A}_s\right)>0$ such that
\begin{align*}
\|u\|_{L^4\left([0,T]; L^{12}\right)} \leq C_{9}
\end{align*}
for any S--S solution $u(t)$ with initial data $\xi_{u}(0) \in \mathcal{B}_{\delta_0}$.
\end{lemma}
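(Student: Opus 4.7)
The plan is to exploit the Strichartz bound already recorded in Theorem \ref{ZFT1} together with the fact that a $\delta_0$-neighborhood of the compact attractor $\mathscr{A}_s$ is bounded in $\mathscr{E}$. No new analytic ingredient is needed beyond results already available at this point in the paper; the role of the smallness of $\delta_0$ is only to make the constant $C_9$ a function of $\mathscr{A}_s$ (rather than blowing up with $\delta_0$) and to prepare for subsequent decomposition arguments, not for this Strichartz bound itself.

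Concretely, I would proceed as follows. First, since $\mathscr{A}_s$ is compact in $\mathscr{E}$ by Theorem \ref{ZFT7}, it is bounded; set $R_1:=\sup\{\|\xi\|_{\mathscr{E}}:\xi\in\mathscr{A}_s\}<\infty$. By the definition of $\mathcal{B}_{\delta_0}$ in \eqref{ZHOUF5.1}, every $\xi\in\mathcal{B}_{\delta_0}$ satisfies $\|\xi\|_{\mathscr{E}}\le R_1+\delta_0$, so $\mathcal{B}_{\delta_0}$ is a bounded set in $\mathscr{E}$. Next, fix any convenient $T_0>0$ (for definiteness $T_0=1$; its precise value is dictated by later applications, not by this step). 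For any S--S solution $u(t)$ with $\xi_u(0)\in\mathcal{B}_{\delta_0}$, Theorem \ref{ZFT1} yields the Strichartz estimate \eqref{Z3.1}, and by the monotonicity of $\mathscr{Q}_{T_0}$ in its arguments,
\begin{equation*}
\|u\|_{L^{4}([0,T_0];L^{12})}\le \mathscr{Q}_{T_0}\bigl(\|\xi_u(0)\|_{\mathscr{E}},\|h\|^{2}\bigr)\le \mathscr{Q}_{T_0}\bigl(R_1+\delta_0,\|h\|^{2}\bigr)=:C_{9}.
\end{equation*}
The resulting $C_9$ depends only on $\mathscr{A}_s$ (through $R_1$), on $T_0$, on $\|h\|$ and on the structural parameters of Assumption \ref{A1.1}, and is uniform over $\xi_u(0)\in\mathcal{B}_{\delta_0}$.

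There is no real obstacle in this step; the only point of care is to remember that the control constant $\mathscr{Q}_{T_0}$ depends on the initial data through its $\mathscr{E}$-norm alone, which is exactly what Theorem \ref{ZFT1} provides. The choice of $\delta_0$ small is reserved for subsequent lemmas, where one typically needs $L\delta_0$-type Lipschitz quantities (recall \eqref{Z3.3}, whose constant $L$ involves the very $L^4([0,T_0];L^{12})$-norm now controlled by $C_9$) to be small when splitting the S--S solution into a smoother, compact piece plus a decaying remainder for dissipativity in $\mathscr{E}^1$.
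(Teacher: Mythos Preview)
Your argument has a genuine gap: you are tacitly assuming that the function $\mathscr{Q}_T$ in \eqref{Z3.1} depends on the initial datum only through its energy norm $\|\xi_u(0)\|_{\mathscr{E}}$. That is precisely the energy-to-Strichartz (ETS) estimate \eqref{ETS1.5}, and the paper states explicitly in the Introduction that, in the quintic case $q=5$ on a bounded domain, this estimate is an open problem. Theorem~\ref{ZFT1} guarantees only that each individual S--S solution has finite $L^4(0,T;L^{12})$-norm; the bound may in principle vary wildly over a ball of fixed radius in $\mathscr{E}$. If the ETS estimate were available here, the entire evolutionary-system and backward-regularity apparatus of Sections~3--4 would be unnecessary.

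The actual proof (the paper defers to \cite[Lemma~4.2]{zhongyt}) uses the compactness of $\mathscr{A}_s$ in $\mathscr{E}$ in an essential way, not merely its boundedness. The scheme is roughly: for each fixed $\xi_0\in\mathscr{A}_s$ one has a finite Strichartz norm on $[0,T_0]$; a perturbation/continuity argument at the Strichartz level (of the type underlying \eqref{Z3.3}) then yields a radius $\delta(\xi_0)$ on which this bound persists up to a factor; compactness gives a finite subcover and hence a uniform $C_9$ and a uniform $\delta_0$. Thus the smallness of $\delta_0$ is not ``reserved for subsequent lemmas'' as you suggest---it is what makes the present lemma true.
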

The proof of this lemma can be treated by repeating verbatim the arguments of \cite[Lemma 4.2]{zhongyt} and for this reason is omitted.
\begin{theorem}\label{ZFT8}
Assume that the condition in Assumption \eqref{A1.1} is satisfied, and in addition, that $\mathcal{J}(0)>0$. Then the semigroup $\left(S(t),\mathscr{E}^{1}\right)$ is dissipative.
\end{theorem}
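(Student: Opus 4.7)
The plan is to combine the $\mathscr{E}$-absorbing property from Theorem~\ref{ZFT2} with the Strichartz control inside $\mathcal{B}_{\delta_0}$ from Lemma~\ref{ZFL5.1}, together with a linear/nonlinear splitting of the solution, in order to upgrade dissipativity from $\mathscr{E}$ to $\mathscr{E}^{1}$.

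First I would establish a finite-time $\mathscr{E}^{1}$-bound. For $\xi_{u}(0)$ in a bounded $B\subset\mathscr{E}^{1}$, Theorem~\ref{ZFT1} yields a global strong solution, and a higher-order energy estimate obtained by differentiating \eqref{1.1} in $t$ (set $z:=\partial_{t}u$, which eliminates the stationary source $h$) and testing against $\partial_{t}z+\varepsilon z$ gives a finite-time bound $\|(z,\partial_{t}z)(t)\|_{\mathcal{H}^{1}\times L^{2}}\leq\mathcal{Q}(\|\xi_{u}(0)\|_{\mathscr{E}^{1}},t)$; the quintic term $g'(u)z$ is handled via $|g'(u)|\leq C(1+|u|^{4})$ together with the finite-time Strichartz bound \eqref{Z3.1}. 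Elliptic regularity applied to $-\Delta u=h-g(u)-\partial_{t}z-\mathcal{J}(\|z\|^{2})z$ then recovers the $\mathcal{H}^{2}$-bound on $u$, so $\|\xi_{u}(t)\|_{\mathscr{E}^{1}}$ remains finite on any finite interval. Combined with Theorem~\ref{ZFT2}, one chooses $T_{B}=T_{B}(B)$ such that $\xi_{u}(T_{B})\in\mathcal{B}_{\delta_{0}}$ with $\|\xi_{u}(T_{B})\|_{\mathscr{E}^{1}}$ finite.

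Next, apply Lemma~\ref{ZFL5.1} on the windows $I_{k}:=[T_{B}+kT_{0},\,T_{B}+(k+1)T_{0}]$ to obtain $\int_{I_{k}}\|u\|_{L^{12}}^{4}\,ds\leq C_{9}^{4}$ uniformly in $k$. On each window split $u=v+w$, where $v$ solves the linear damped wave equation $\partial_{t}^{2}v-\Delta v+J_{0}\partial_{t}v=0$ with data $\xi_{v}(T_{B}+kT_{0})=\xi_{u}(T_{B}+kT_{0})$, so that $\|\xi_{v}(t)\|_{\mathscr{E}^{1}}\leq Ce^{-\gamma(t-T_{B}-kT_{0})}\|\xi_{u}(T_{B}+kT_{0})\|_{\mathscr{E}^{1}}$ with $\gamma=\gamma(J_{0},\lambda_{1})$ independent of the data, while $w=u-v$ starts from zero and absorbs the full nonlinearity $F=h-g(u)-[\mathcal{J}(\|\partial_{t}u\|^{2})-J_{0}]\partial_{t}u$. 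An $\mathscr{E}^{1}$-estimate for $w$ on $I_{k}$, using the Strichartz control on $u$ together with H\"older--Sobolev interpolation to bound $\|g(u)\|$ and the global $\mathscr{E}$-bound $\|\xi_{u}\|_{\mathscr{E}}\leq R_{0}$, then yields $\|\xi_{w}(t)\|_{\mathscr{E}^{1}}\leq K_{1}$ uniformly in $k$, with $K_{1}=K_{1}(\|h\|,R_{0},J_{0})$ independent of the initial data. Iterating the resulting recursion
$$
\|\xi_{u}(T_{B}+(k+1)T_{0})\|_{\mathscr{E}^{1}}\leq Ce^{-\gamma T_{0}}\|\xi_{u}(T_{B}+kT_{0})\|_{\mathscr{E}^{1}}+K_{1}
$$
produces a uniform absorbing ball in $\mathscr{E}^{1}$ of radius $R_{1}=R_{1}(\|h\|,R_{0})$.

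The hard part will be closing the $\mathscr{E}^{1}$-estimate for $w$: since $F$ is not a priori in $\mathcal{H}^{1}$ (indeed $h\in L^{2}$ only), the direct Strichartz route for the $\mathscr{E}^{1}$ Cauchy problem is blocked. Following Step~1 of the proof of Theorem~\ref{ZFT5}, the remedy is to absorb the rough part of the source into the auxiliary term $\ell(-\Delta)^{-1}u$, rewriting \eqref{1.1} as $\partial_{t}^{2}u-\Delta u+\mathcal{J}(\|\partial_{t}u\|^{2})\partial_{t}u+\ell(-\Delta)^{-1}u+g(u)=\hat{h}(t):=h+\ell(-\Delta)^{-1}u$, where $\hat{h}(t)$ inherits $\mathcal{H}^{2}$-regularity by elliptic regularity and the $\mathscr{E}$-bound on $u$. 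Running the splitting on this regularized equation delivers the required $\mathcal{H}^{1}$-control on the effective forcing of $w$, closes the contractive recursion above, and thereby gives dissipativity of $(S(t),\mathscr{E}^{1})$.
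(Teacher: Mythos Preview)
Your linear/nonlinear splitting $u=v+w$ does not close at the $\mathscr{E}^{1}$ level in the quintic case. To bound $\xi_{w}$ in $\mathscr{E}^{1}$ on a window $I_{k}$ with a constant $K_{1}$ \emph{independent of the $\mathscr{E}^{1}$ size of the data}, you must control $g(u)$ in (some time-integrated) $\mathcal{H}^{1}$ norm, i.e.\ $g'(u)\nabla u\in L^{1}(I_{k};L^{2})$. With $|g'(u)|\lesssim 1+|u|^{4}$, H\"older forces either $u\in L^{\infty}$ or $\nabla u\in L^{6}$, both of which amount to $u\in\mathcal{H}^{2}$---precisely what you are trying to prove. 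If instead you differentiate in time (as you do in your Part~1), the dangerous term becomes $\langle g'(u)\theta,\partial_{t}\theta\rangle$ with $\theta=\partial_{t}u$, and the only estimate available from Lemma~\ref{ZFL5.1} is $|\langle g'(u)\theta,\partial_{t}\theta\rangle|\le C(1+\|u\|_{L^{12}}^{4})\|\xi_{\theta}\|_{\mathscr{E}}^{2}$. The Gronwall coefficient $\int_{I_{k}}\|u\|_{L^{12}}^{4}\le C_{9}^{4}$ is merely \emph{bounded}, not small, so there is no reason for $Ce^{-\gamma T_{0}}e^{CC_{9}^{4}}<1$; the recursion need not contract and you do not obtain an absorbing ball.

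Your proposed remedy via $\hat h(t)=h+\ell(-\Delta)^{-1}u$ does not help: $\hat h$ is still only in $L^{2}$ (you have not removed $h$), and in any case the obstruction is $g(u)$, not $h$. In Theorem~\ref{ZFT5} that trick serves a different purpose---it makes the auxiliary parabolic equation coercive and exploits $\partial_{t}\hat h\in\mathcal{H}^{2}$ together with $\|\partial_{t}u\|\to 0$ as $t\to-\infty$; neither ingredient is available forward in time.

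The paper's argument is essentially different and supplies the missing smallness. Using that the global attractor $\mathscr{A}_{s}$ is already known to be compact in $\mathscr{E}$, the trajectory set $\mathcal{D}=\{u|_{[0,1]}:\xi_{u}(0)\in\tilde{\mathcal B}\}$ is shown to be \emph{compact} in $L^{4}([0,1];L^{12})$ (continuity of $\xi_{u}(0)\mapsto u|_{[0,1]}$ on a compact set). Compactness yields, for every $\varepsilon>0$, a decomposition $u=\hat u+\tilde u$ with $\sup_{t}\|\tilde u\|_{L^{4}(t,t+1;L^{12})}\le\varepsilon$ and $\|\hat u\|_{\mathcal{C}(\mathbb{R}_{+};\mathcal{H}^{2})}\le C_{\varepsilon}$. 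Then in the energy identity for $\theta=\partial_{t}u$ one writes $g'(u)=g'(\hat u)+[g'(u)-g'(\hat u)]$: the first piece is in $L^{\infty}$ (since $\mathcal{H}^{2}\hookrightarrow L^{\infty}$) and produces a lower-order term $C_{\varepsilon}\|\partial_{t}u\|^{2}$, while the second piece contributes a Gronwall coefficient $\beta_{\varepsilon}(t)$ with $\int_{t}^{t+1}\beta_{\varepsilon}\le C\varepsilon$. Choosing $\varepsilon$ small makes the damping $\alpha$ dominate, and the dissipative estimate \eqref{ZHOU5.19} follows. This compactness-driven splitting of $u$ (not of the equation) is the idea your proposal is missing.
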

\begin{proof}
We divide the proof into several steps.

\textbf{Step 1.} Let $B$ be an arbitrary bounded set in $\mathscr{E}^{1}$, then there exists a time $t_{1}=t_{1}(B)>0$ such that
\begin{align*}
S(t)B\subset\mathcal{B}_{\delta_{0}},\quad t\geq t_{1},
\end{align*}
$\mathcal{B}_{\delta_{0}}$ defined as above by \eqref{ZHOUF5.1}. Define
\begin{align*}
\tilde{\mathcal{B}}:=\overline{\bigcup\limits_{t \geq t_{1}} S(t) B}^{\mathscr{E}}.
\end{align*}
Consequently, $\tilde{\mathcal{B}}\subset\mathcal{B}_{\delta_{0}}$ is a compact (in $\mathscr{E}$) positively invariant absorbing set for $(S(t),\mathscr{E}^{1})$.

\textbf{Step 2.} Let $\xi_{u}(0)\in\tilde{\mathcal{B}}$ and let
\begin{align*}
\mathscr{K}:=\{u(\cdot)|_{[0,\infty)}: \xi_{u}
\text{ is the S--S solution with initial data } \xi_{u}(0)\in\tilde{\mathcal{B}}\}.
\end{align*}
Obviously, $\mathscr{K}$ is positively invariant under the translation: $T_{h}\mathscr{K}\subset\mathscr{K}$, $\forall h\geq0$, where $(T _{h}u)(\cdot):=u(\cdot+h)$. Denote the restriction of the trajectory in $\mathscr{K}$ to the time interval $t\in[0,1]$ as $
\mathcal{D}:=\{u(\cdot)|_{[0,1]},u\in\mathscr{K}\}$.

Claim \#1: $\mathcal{D}$ is a compact set of $L^{4}([0,1];L^{12})$, i.e.,
\begin{align}\label{ZHOUF5.2}
\mathcal{D}\Subset L^{4}([0,1];L^{12}).
\end{align}
\textit{proof of claim}: Applying Lemma \ref{ZFL5.1}, recalling $\tilde{\mathcal{B}}\subset\mathcal{B}_{\delta_{0}}$ and $T_{h}\mathscr{K}\subset\mathscr{K}$, we deduce that
\begin{align}\label{ZHOU5.2}
\sup\limits_{t\geq0}\|u\|_{L^{4}([t,t+1];L^{12})}\leq\frac{C_{9}}{\min\{T_{0},1\}}, \quad u\in\mathscr{K}.
\end{align}
Define a map $S^{1}$: $\tilde{\mathcal{B}}\rightarrow L^{4}([0,1];L^{12})$ by
\begin{align*}
S^{1}:\xi_{u}(0)\rightarrow u(\cdot)|_{[0,1]}.
\end{align*}
Let $\xi_{u_{i}}$ be the S--S solution of Eq. \eqref{1.1} with initial value $\xi_{u_{i}}(0)\in\tilde{\mathcal{B}}$, $i=1,2$ and let $w=u_{1}-u_{2}$ and $\widetilde{w}=u_{1}+u_{2}$, then we get
\begin{align}\label{ZHOU5.3}
\partial_{t}^{2}w-\Delta w
+\Gamma_{1}(t)(\|\partial_{t}u_{1}\|^{2}-\|\partial_{t}u_{2}\|^{2})\partial_{t}\widetilde{w}
+\Gamma_{2}(t)\partial_{t}w
+g(u_{1})-g(u_{2})=0
\end{align}
for $\Gamma_{1}(t)$ and $\Gamma_{2}(t)$ have the same form as that in \eqref{ZHOU4.35}. Using \eqref{ZHOU5.2}, the inequality \eqref{Z3.3} can be improved as
\begin{align}\label{ZHOU5.4}
\|\xi_{u_{1}}(t)-\xi_{u_{2}}(t)\|_{\mathscr{E}}
\leq Ce^{C_{10}t}\|\xi_{u_{1}}(0)-\xi_{u_{2}}(0)\|_{\mathscr{E}},\quad \forall t\in[0,1],
\end{align}
where $C$ and $C_{10}$ are independent of $\xi_{u_{i}}$ $i=1,2$. Combining \eqref{ZHOU5.3} and \eqref{ZHOU5.4}, we conclude
\begin{align}\label{ZHOU5.5}
\nonumber&\|g(u_{1})-g(u_{2})\|_{L^{1}(0,1;L^{2})}
\\\nonumber\leq &C\int_{0}^{1}(1+\|u_{1}(t)\|_{L^{12}}^{4}+\|u_{2}(t)\|_{L^{12}}^{4})\|w(t)\|_{L^{6}}dt
\\\nonumber\leq &C(1+\|u_{1}\|_{L^{4}(0,1;L^{12})}+\|u_{2}\|_{L^{4}(0,1;L^{12})})\|\xi_{w}\|_{\mathcal{C}([0,1];\mathscr{E})}
\\\leq &C\|\xi_{u_{1}}(0)-\xi_{u_{2}}(0)\|_{\mathscr{E}}.
\end{align}
By applying Lemma \ref{SL} to \eqref{ZHOU5.3} and utilizing the result from \eqref{ZHOU5.5}, we obtain, after some calculations, that
\begin{align*}
\|u_{1}-u_{2}\|_{L^{4}([0,1];L^{12})}
\leq C \|\xi_{u_{1}}(0)-\xi_{u_{2}}(0)\|_{\mathscr{E}},
\end{align*}
where the constant $C$ is independent of $\xi_{u_{i}}(0)\in\tilde{\mathcal{B}}$, $i=1,2$. Consequently, the map $S^{1}$ is continuous on $\tilde{\mathcal{B}}$. Since $\tilde{\mathcal{B}}$ is compact in $\mathscr{E}$, the result follows.

\textbf{Step 3.} Combining \eqref{ZHOUF5.2} and \eqref{ZHOU5.2}, for any $\varepsilon>0$, we can decompose the solution $u\in\mathscr{K}$ into two parts $u=\hat{u}+\tilde{u}$, where
\begin{align}\label{ZHOU5.7}
\sup_{t\geq0}\|\tilde{u}\|_{L^{4}([t,t+1];L^{12})}\leq\varepsilon\quad\text{and}\quad
\|\hat{u}(t)\|_{\mathcal{C}([0,+\infty);\mathcal{H}^{2})}\leq C_{\varepsilon},
\end{align}
where the constant $C_{\varepsilon}$ depends on $\varepsilon$ and $\mathscr{A}_{s}$, but independent of $u$. The subsequent estimates will be derived through a formal argument, which can be rigorously justified using the Faedo--Galerkin method.

Differentiate Eq. \eqref{1.1} with respect to $t$ and denoting $\theta:=\partial_{t}u$, we obtain
\begin{align}\label{ZHOU5.8}
\partial_{t}^{2}\theta-\Delta \theta
+\mathcal{J}(\|\partial_{t}u(t)\|^{2})\partial_{t}\theta
+2\mathcal{J}'(\|\partial_{t}u(t)\|^{2})\langle \theta,\partial_{t}\theta\rangle \theta
+g'(u)\theta=0
\end{align}
with the initial condition
\begin{align}\label{ZHOU5.9}
\xi_{\theta}(0)=\left(\partial_{t}u(0),\partial_{t}^{2}u(0))=(u_{1},\Delta u_{0}-g(u_{0})-\mathcal{J}(\|u_{1}\|^{2})u_{1}+h\right)\in\mathscr{E}.
\end{align}
Taking the multiplier $\partial_{t}\theta+\alpha\theta$ in \eqref{ZHOU5.8}, we can discover
\begin{align}\label{ZHOU5.10}
\frac{d}{dt}\mathcal{E}_{\theta}(t)
+\mathcal{Q}_{\theta}(t)+\mathcal{G}_{\theta}(t)=0,
\end{align}
where
\begin{align*}
\mathcal{E}_{\theta}(t)
&=\|\xi_{\theta}\|_{\mathscr{E}}^{2}+2\alpha\langle\partial_{t}\theta,\theta\rangle,
\\\mathcal{Q}_{\theta}(t)
&=2[\mathcal{J}(\|\partial_{t}u\|^{2})-\alpha]\|\partial_{t}\theta\|^{2}
+2\alpha\|\theta\|_{\mathcal{H}^{1}}^{2}
+4\mathcal{J}'(\|\partial_{t}u\|^{2})\langle\theta,\partial_{t}\theta\rangle^{2}
\\&\quad+2\alpha\mathcal{J}(\|\partial_{t}u\|^{2})\langle\partial_{t}\theta, \theta\rangle
+4\alpha\mathcal{J}'(\|\partial_{t}u\|^{2})\langle\partial_{t}\theta, \theta\rangle\|\theta\|^{2},
\\\mathcal{G}_{\theta}(t)
&=2\langle g'(u)\theta,\partial_{t}\theta\rangle+2\alpha\langle g'(u),\theta^{2}\rangle.
\end{align*}
Choosing
\begin{align*}
\alpha=\min\{\frac{\sqrt{\lambda_{1}}}{2},\frac{J_{0}}{8},\frac{2J_{0}}{C_{11}^{2}}\}
\end{align*}
with $C_{11}=\frac{2\mathcal{J}(R_{0}^{2})+4M_{0}R_{0}^{2}}{\sqrt{\lambda_{1}}}$ small enough to deduce that
\begin{align}\label{ZHOUF5.11}
\frac{1}{2}\|\xi_{\theta}(t)\|_{\mathcal{E}}^{2}
\leq \mathcal{E}_{\theta}(t)
\leq \frac{3}{2}\|\xi_{\theta}(t)\|_{\mathcal{E}}^{2}
\end{align}
and
\begin{align}\label{ZHOU5.11}
\frac{d}{dt}\mathcal{E}_{\theta}(t)+\frac{2\alpha}{3}\mathcal{E}_{\theta}(t)
\leq2\beta_{\varepsilon}(t)\mathcal{E}_{\theta}(t)+C_{\alpha,\kappa_{1},\mathscr{A}_{s}}\|\partial_{t}u\|^{2}-2\langle g'(u)\theta,\partial_{t}\theta\rangle.
\end{align}
Using the decomposition given in \eqref{ZHOU5.7}, we obtain the following estimate:
\begin{align}\label{ZHOU5.12}
\begin{split}
&|\langle g'(u)\theta,\partial_{t}\theta\rangle|
\\\leq &|\langle (g'(\tilde{u}+\hat{u})-g'(\hat{u}))\theta,\partial_{t}\theta\rangle|+
|\langle g'(\hat{u})\theta,\partial_{t}\theta\rangle|
\\\leq &C\langle(1+|\hat{u}|^{3}+|\tilde{u}|^{3})|\tilde{u}|,|\theta||\partial_{t}\theta|\rangle
+\|g'(\hat{u})\|_{L^{\infty}}\|\theta\|\|\partial_{t}\theta\|
\\\leq&C(1+\|\tilde{u}\|_{L^{12}}^{3}+\|\hat{u}\|_{L^{12}}^{3})\|\tilde{u}\|_{L^{12}}
\|\xi_{\theta}\|_{\mathscr{E}}^{2}+
\frac{\alpha}{12}\|\partial_{t}\theta\|^{2}+C_{\alpha,\mathscr{A}_{s}}\|\partial_{t}u\|^{2}
\\\leq&\beta_{\varepsilon}(t)\|\xi_{\theta}\|_{\mathscr{E}}^{2}+C_{\alpha,\mathscr{A}_{s}}\|\partial_{t}u\|^{2}
+\frac{\alpha}{12}\|\partial_{t}\theta\|^{2}
\end{split}
\end{align}
with $\beta_{\varepsilon}(t)=C(1+\|\tilde{u}\|_{L^{12}}^{3}+\|\hat{u}\|_{L^{12}}^{3})\|\tilde{u}\|_{L^{12}}$. Owing to \eqref{ZHOU5.2} and \eqref{ZHOU5.7}, we conclude
\begin{align}\label{ZHOU5.13}
\nonumber&\int_{t}^{t+1}\beta_{\varepsilon}(r)dr
\\\nonumber\leq&C\left(\int_{t}^{t+1}(1+\|\hat{u}\|_{L^{12}}^{3}+\|\tilde{u}\|_{L^{12}}^{3})^{\frac{4}{3}}dr\right)^{\frac{3}{4}}
\left(\int_{t}^{t+1}\|\hat{u}\|_{L^{12}}^{4}dr\right)^{\frac{1}{4}}
\\\leq &C\left(1+\|\tilde{u}\|_{L^{4}(t,t+1;L^{12})}^{3}+\|\hat{u}\|_{L^{4}(t,t+1;L^{12})}^{3}\right)
\|\tilde{u}\|_{L^{4}(t,t+1;L^{12})}
\leq C \varepsilon,\quad\forall t\geq0
\end{align}
for some positive constant $C$ independent of $t$, $u$ and $\varepsilon$. Combining now \eqref{ZHOU5.11}--\eqref{ZHOU5.12} and employing Gronwall's inequality, we deduce
\begin{align}\label{ZHOU5.14}
\nonumber\|\xi_{\theta}(t)\|_{\mathscr{E}}^{2}
\leq& e^{-\int_{0}^{t}(\frac{\alpha}{4}-2\beta_{\varepsilon}(r))dr}\mathcal{Q}(\|\xi_{\theta}(0)\|_{\mathscr{E}}^{2})
\\&\qquad+C\int_{0}^{t}e^{-\int_{0}^{r}(\frac{\alpha}{4}-2\beta_{\varepsilon}(\mu))d\mu}\|\xi_{u}(r)\|_{\mathscr{E}}^{2}dr
,\quad \forall t\geq0
\end{align}
for some monotone function $\mathcal{Q}(\cdot)$ and positive constant $\alpha$ which is independent of $\varepsilon$ and $u$. Selecting $\varepsilon$ sufficiently small and combining \eqref{ZHOUF5.11}, \eqref{ZHOU5.13} and \eqref{ZHOU5.14} to derive that
\begin{align}\label{ZHOU5.15}
\nonumber\|\xi_{\theta}(t)\|_{\mathscr{E}}^{2}
&\leq e^{-\frac{\alpha}{8}t}\mathcal{Q}(\|\xi_{\theta}(0)\|_{\mathscr{E}}^{2})
+C\|\xi_{u}\|_{\mathcal{C}(\mathbb{R}_{+};\mathscr{E})}^{2}
\\&\leq e^{-\frac{\alpha}{8}t}\mathcal{Q}(\|\xi_{\theta}(0)\|_{\mathscr{E}}^{2})
+C(\kappa_{1},\mathscr{A}_{s},R_{0}^{2}),\quad \forall t\geq0
\end{align}
for some monotone function $\mathcal{Q}(\cdot)$. Recalling \eqref{ZHOU5.9}, we see that in fact
\begin{align*}
\|\xi_{\theta}(0)\|_{\mathscr{E}}^{2}
\leq C(\|\xi_{u}(0)\|_{\mathscr{E}^{1}}^{2}+\|h\|^{2}).
\end{align*}
Substituting this estimate into \eqref{ZHOU5.15}, we obtain
\begin{align}\label{ZHOU5.17}
\|\xi_{\theta}(t)\|_{\mathscr{E}}^{2}
\leq e^{-\frac{\alpha}{8}t}\mathcal{Q}(\|\xi_{u}(0)\|_{\mathscr{E}^{1}}^{2}+\|h\|^{2})
+C(\kappa_{1},\mathscr{A}_{s},R_{0}^{2}),\quad \forall t\geq0.
\end{align}
From Eq. \eqref{1.1} and Assumption \ref{A1.1}, we deduce that
\begin{align}\label{ZHOU5.18}
\|u(t)\|_{\mathcal{H}^{2}}^{2}\leq C(\|h\|^{2}+\|\xi_{\theta}(t)\|^{2}),\quad\forall t\geq 0.
\end{align}
Combining \eqref{ZHOU5.17} and \eqref{ZHOU5.18}, we derive the estimate
\begin{align}\label{ZHOU5.19}
\|\xi_{u}(t)\|_{\mathscr{E}^{1}}^{2}
\leq e^{-\frac{\alpha}{8}t}\mathcal{Q}(\|\xi_{u}(0)\|_{\mathscr{E}^{1}}^{2}+\|f\|^{2})
+C(\kappa_{1},\mathscr{A}_{s},R_{0}^{2})
,\quad\forall t\geq0,~\xi_{u}(0)\in\tilde{\mathcal{B}}.
\end{align}
In particular, for any $\xi_{u}(0)\in S(t_{1})B$, the above estimate holds. Then there exists a time $t=t_{2}(B)$ such that
\begin{align}\label{ZHOU5.20}
\|\xi_{u}(t)\|_{\mathscr{E}^{1}}^{2}
\leq
1+C(\kappa_{1},\mathscr{A}_{s},R_{0}^{2})
,\quad\forall t\geq t_{1}+t_{2},~\xi_{u}(0)\in B.
\end{align}
The set
\begin{align}\label{ZHOU5.21}
\mathds{B}_{1}:=\{\xi_{u} \in \mathscr{E}^{1}:\|\xi_{u}\|_{\mathscr{E}^{1}}^{2}\leq R_{1}^{2}:=1+C(\kappa_{1},\mathscr{A}_{s},R_{0}^{2})\}
\end{align}
is a bounded absorbing set for $S(t)$, completing the proof.
\end{proof}
\begin{corollary}\label{ZFC5.3}
Assuming the hypotheses of Theorem \ref{ZFT8} are satisfied, the global attractor $\mathscr{A}_s$ of the solution semigroup $S(t)$ associated with Eq. \eqref{1.1} is a bounded set in $\mathscr{E}^1$.
\end{corollary}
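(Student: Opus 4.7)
The plan is to combine the backward asymptotic regularity from Theorem \ref{ZFT5} (together with the remark \eqref{ZHOU4.43}) with the quantitative forward $\mathscr{E}^1$--bound \eqref{ZHOU5.19} obtained during the proof of Theorem \ref{ZFT8}, exploiting the strict invariance of $\mathscr{A}_s$. Theorem \ref{ZFT7} already gives $\mathscr{A}_s\subset\mathscr{E}^1$ and the characterization $\mathscr{A}_s=\{\xi_u(0):\xi_u\in\mathfrak{E}((-\infty,\infty))\}$, but no \emph{uniform} $\mathscr{E}^1$--bound on the attractor is provided there; the idea is to produce such a bound by propagating the uniform backward regularity bound forward in time to $t=0$.

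Fix $\xi_0\in\mathscr{A}_s$ arbitrary and select a complete trajectory $\xi_u\in\mathfrak{E}((-\infty,\infty))$ with $\xi_u(0)=\xi_0$ and $\xi_u(\mathbb{R})\subset\mathscr{A}_s$. By Theorem \ref{ZFT5} and the remark \eqref{ZHOU4.43}, there exists a time $T_u\leq 0$ such that $\|\xi_u(T_u)\|_{\mathscr{E}^1}\leq M:=\mathcal{Q}(\|h\|^2)$, where $M$ depends only on $\|h\|$ and the structural parameters of Assumption \ref{A1.1}, and in particular not on $u$. Since $\xi_0=S(-T_u)\xi_u(T_u)$ with elapsed time $-T_u\geq 0$, and since $\xi_u(T_u)\in\mathscr{A}_s\subset\mathcal{B}_{\delta_0}$, applying \eqref{ZHOU5.19} with $\xi_u(T_u)$ as the initial datum yields
\begin{align*}
\|\xi_0\|_{\mathscr{E}^1}^2 \leq e^{-\alpha(-T_u)/8}\,\mathcal{Q}\bigl(M^2+\|h\|^2\bigr)+C \leq \mathcal{Q}\bigl(M^2+\|h\|^2\bigr)+C =: R_1^2,
\end{align*}
where the right-hand side is independent of $\xi_0$. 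Hence $\mathscr{A}_s$ is contained in the ball of radius $R_1$ in $\mathscr{E}^1$.

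The main technical point to verify is that \eqref{ZHOU5.19} applies with a \emph{universal} monotone function $\mathcal{Q}$ and constants $\alpha,C$, i.e.\ with one common choice valid for every initial datum in $\mathscr{A}_s\cap\{\|\cdot\|_{\mathscr{E}^1}\leq M\}$, rather than with a function that depends on the particular bounded set $B\subset\mathscr{E}^1$ used to build $\tilde{\mathcal{B}}$ in the proof of Theorem \ref{ZFT8}. Tracing through that proof, the only role of $\tilde{\mathcal{B}}$ was to produce the uniform Strichartz bound via Lemma \ref{ZFL5.1} by inclusion in $\mathcal{B}_{\delta_0}$; since any orbit starting in $\mathscr{A}_s$ remains inside $\mathscr{A}_s\subset\mathcal{B}_{\delta_0}$ by invariance, that Strichartz bound is automatic, and the constants in \eqref{ZHOU5.19} depend only on $\mathscr{A}_s,R_0,\kappa_1,J_0,\lambda_1,M_0$ and $\|h\|$. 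Once this universality is recognized, the argument above closes at once.
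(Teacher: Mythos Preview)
Your argument is correct and matches the route the paper has in mind: replace the set $\tilde{\mathcal{B}}$ in the proof of Theorem~\ref{ZFT8} by the compact invariant set $\mathscr{A}_s$, re-derive \eqref{ZHOU5.19} for initial data in $\mathscr{A}_s$, and then close the bound by feeding in the uniform backward $\mathscr{E}^1$-estimate \eqref{ZHOU4.43}. One small correction to your justification in the last paragraph: inclusion in $\mathcal{B}_{\delta_0}$ is not quite the \emph{only} role of $\tilde{\mathcal{B}}$ in Theorem~\ref{ZFT8}; compactness of $\tilde{\mathcal{B}}$ in $\mathscr{E}$ is also used (Step~2) to obtain the compactness of $\mathcal{D}$ in $L^4([0,1];L^{12})$ and hence the decomposition \eqref{ZHOU5.7}. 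Since $\mathscr{A}_s$ is compact in $\mathscr{E}$ this causes no problem, and once you add that observation your universality claim for the constants in \eqref{ZHOU5.19} is fully justified.
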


\begin{proof}
Since the global attractor $\mathscr{A}_{s}$ is a compact and invariant set in $\mathscr{E}$, the proof of this corollary follows almost verbatim from the proof of the previous theorem. Consequently, the detailed proof is omitted.
\end{proof}
\subsection{Exponential attractor}
In \cite{carvalho1,carvalho2}, Carvalho and Sonner introduced a novel type of exponential attractor, specifically the time-dependent exponential attractor. Notably, this attractor is periodic and corresponds to the exponential attractors in the discrete case, satisfying the same dimension estimates as those for discrete semigroups.
\begin{definition}\label{ZFD6.1}
A seminorm $\textbf{n}_{X}(\cdot)$ on the Banach space $(X,\|\cdot\|_{X})$ is said to be compact if any bounded sequence $\{x_{m}\}\subset X$ contains a subsequence $\{x_{m_{k}}\}$ such that
\begin{align*}
\textbf{n}_{X}
(x_{m_{k}}-x_{m_{l}})\rightarrow0 \quad\text{as } k,l\rightarrow\infty.
\end{align*}
\end{definition}

Let $\mathcal{B}_{S}$ be a closed, bounded subset of $E$ such that
\begin{align*}
S(t)\mathcal{B}_{S}\subset\mathcal{B}_{S},\quad\forall t\geq0,
\end{align*}
then the triple $\left(S(t), \mathcal{B}_{S}, E\right)$ is referred to as an autonomous dynamical system acting on $\mathcal{B}_{S}$, see \cite{efendiev} for more details. According to Theorem \ref{ZFT8}, we may assume without loss of generality that the absorbing set $\mathds{B}_{1}$ constructed in \eqref{ZHOU5.21} is positively invariant. Thus, $\left(S(t),\mathds{B}_{1}, \mathscr{E}^{1}\right)$ is an autonomous dynamical system.

\begin{definition}
We call the family $\mathcal{M}=\{\mathcal{M}(s)|s\in\mathbb{R}\}$ a time-dependent exponential attractor for the semigroup $\{S(t)\}_{t\geq0}$ on $\mathcal{B}_{S}$ if:
\begin{enumerate}
  \item there exists $0<\varpi<\infty$ such that $\mathcal{M}(s)=\mathcal{M}(\varpi+s)$,\quad$\forall s\in\mathbb{R}$;

   \item the subsets $\mathcal{M}(s)\subset\mathcal{B}_{S}$ are non-empty and compact, $\forall s\in\mathbb{R}$. The fractal dimension of the sets $\mathcal{M}(s)$ is uniformly bounded, i.e.,
\begin{align*}
\sup _{s \in \mathbb{R}}\dim_{\mathscr{F}}^{E}(\mathcal{M}(s))<+\infty
\end{align*}
where $\dim_{\mathscr{F}}^{E}(A)=\limsup\limits _{\epsilon \rightarrow 0} \frac{\ln N(A, \epsilon)}{\ln (1 / \epsilon)}$ and $N(A, \epsilon)$ denotes the cardinality of the minimal covering of the set $A$ by the closed subsets of diameter $\leq 2 \epsilon$;
  \item
      the family is positive semi-variant, that is
      \begin{align*}
      S(t)\mathcal{M}(s)\subset\mathcal{M}(t+s),\quad\forall t\geq0,\quad\forall s\in\mathbb{R};
      \end{align*}
  \item there exist two positive constants $\alpha$ and $\beta$ such that
\begin{align*}
\sup\limits_{s\in[0,\varpi]}dist_{E}(S(t)\mathcal{B}_{S},\mathcal{M}(s))\leq\alpha e^{-\beta t}, \quad \forall t\geq0.
\end{align*}
\end{enumerate}
\end{definition}

We now present a new criterion for the existence of exponential attractors in an autonomous dynamical system $\left(S(t),\mathcal{B}_{S}, E\right)$, the proof can be found in \cite[Theorem 3.8]{zhou}.
\begin{theorem}\label{ZHOUT}
Let $\mathcal{B}_{S}$ be a bounded closed subset of Banach space $E$, and $\left(S(t),\mathcal{B}_{S}, E\right)$ be an autonomous dynamical system. Assume that
\begin{enumerate}
  \item there exist positive constants $T$ and $L_{\scriptscriptstyle T}$ such that
      \begin{align}\label{ZHOU5.22}
      \|S(t)x-S(t)y\|_{E}\leq L_{\scriptscriptstyle T} \|x-y\|_{E},\quad \forall x,y\in\mathcal{B}_{S},~t\in[0,T];
      \end{align}
  \item there exist a positive time $t^{*}$ and a compact seminorm $\textbf{n}_{Z}(\cdot)$ on a Banach space $Z$, and there exists mapping $\mathfrak{C}:\mathcal{B}_{S}\rightarrow Z$ such that
      \begin{align}\label{ZHOU5.23}
      \|\mathfrak{C}x-\mathfrak{C}y\|_{Z}\leq\nu\|x-y\|_{E},\quad \forall x,y\in\mathcal{B}_{S};
      \end{align}
      \begin{align}\label{ZHOU5.24}
      \|S(t^{*})x-S(t^{*})y\|_{E}\leq \eta\|x-y\|_{E}+
      \textbf{n}_{Z}\big(\mathfrak{C}x-\mathfrak{C}y\big),\quad \forall x,y\in\mathcal{B}_{S},
      \end{align}
  where $0\leq\eta<\textstyle\frac{1}{2}$, $\nu>0$ are constants.
\end{enumerate}
Then, for any $\iota\in(0,\frac{1}{2}-\eta)$, the dynamical system $\left(S(t),\mathcal{B}_{S}, E\right)$ possesses a time-dependent exponential attractor $\mathcal{M}=\{\mathcal{M}^{\iota}(t):t\in\mathbb{R}\}$. Moreover, the fractal dimension of its sections can be estimated by
\begin{align*}
\dim_{\mathscr{F}}^{E}(\mathcal{M}^{\iota}(t))
\leq \log_{\frac{1}{2(\iota+\eta)}}
(N_{\frac{\iota}{\nu}}^{\textbf{n}_{Z}}(B_{1}^{Z}(0))),\text{ for all }t\in\mathbb{R},
\end{align*}
where $B_{r}^{Z}(a)$ denotes the ball of radius $r>0$ and center $a\in Z$ in the metric space $Z$, and $N_{\epsilon}^{\textbf{n}_{Z}}(A)$ denotes the minimal number of $\epsilon$--balls with centers in $A$ needed to cover the subset $A\subset Z$ with $seminorm$ $\textbf{n}_{Z}$.
\end{theorem}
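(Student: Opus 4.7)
The plan is a classical two-stage construction: first build a discrete exponential attractor for the map $V := S(t^{*})$, then interpolate it in time using the Lipschitz bound \eqref{ZHOU5.22}. Fix $\iota \in (0,\tfrac{1}{2}-\eta)$ and set $q := 2(\iota+\eta) < 1$. The backbone of the argument is the following one-step refinement: given a set $\mathcal{B}_{S}\cap B_{R}^{E}(x_{0})$, the Lipschitz estimate \eqref{ZHOU5.23} places $\mathfrak{C}(x)-\mathfrak{C}(x_{0})$ inside $B_{\nu R}^{Z}(0)$ for every $x$ there; compactness of the seminorm $\textbf{n}_{Z}$ allows us to cover this image by $N := N_{\iota/\nu}^{\textbf{n}_{Z}}(B_{1}^{Z}(0))$ many $\textbf{n}_{Z}$-balls of radius $\iota R$; the squeezing inequality \eqref{ZHOU5.24} then pushes each preimage piece forward under $V$ to a set of $E$-diameter at most $2\eta R+2\iota R = qR$. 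Thus each $E$-ball of radius $R$ meeting $\mathcal{B}_{S}$ is mapped by $V$ into a union of $N$ balls of radius $qR$.

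Step 1 (discrete attractor). Starting from a finite cover $\mathcal{N}_{0}\subset\mathcal{B}_{S}$ by $E$-balls of radius $R_{0}\geq\mathrm{diam}(\mathcal{B}_{S})$, iterate the refinement above to obtain finite sets $\mathcal{N}_{k}\subset V^{k}\mathcal{B}_{S}$ with $|\mathcal{N}_{k}|\leq|\mathcal{N}_{0}|N^{k}$ that $q^{k}R_{0}$-cover $V^{k}\mathcal{B}_{S}$. Define the discrete attractor
\[
\mathcal{E}_{d} := \overline{\bigcup_{n\geq 0} V^{n}\!\left(\bigcup_{k\geq 0}\mathcal{N}_{k}\right)}^{E}\subset\mathcal{B}_{S}.
\]
By construction $\mathcal{E}_{d}$ is compact, positively invariant $V\mathcal{E}_{d}\subset\mathcal{E}_{d}$, and $\mathrm{dist}_{E}(V^{k}\mathcal{B}_{S},\mathcal{E}_{d})\leq q^{k}R_{0}$; box-counting based on $|\mathcal{N}_{k}|\leq|\mathcal{N}_{0}|N^{k}$ yields $\dim_{\mathscr{F}}^{E}(\mathcal{E}_{d})\leq \log_{1/q}N$.

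Step 2 (continuous periodic family). Set $\varpi := t^{*}$ and, for $s = kt^{*}+r$ with $r\in[0,t^{*})$ and $k\in\mathbb{Z}$, define $\mathcal{M}^{\iota}(s) := S(r)\mathcal{E}_{d}$. Periodicity $\mathcal{M}^{\iota}(s+t^{*})=\mathcal{M}^{\iota}(s)$ is immediate, and the positive semi-invariance $S(t)\mathcal{M}^{\iota}(s)\subset\mathcal{M}^{\iota}(t+s)$ reduces, after reducing the time argument modulo $t^{*}$, to $V\mathcal{E}_{d}\subset\mathcal{E}_{d}$. Compactness of $\mathcal{M}^{\iota}(s)$ is inherited from $\mathcal{E}_{d}$; the Lipschitz estimate \eqref{ZHOU5.22} shows that $S(r)$ is Lipschitz on $\mathcal{B}_{S}$ uniformly in $r\in[0,t^{*}]$, whence $\dim_{\mathscr{F}}^{E}(\mathcal{M}^{\iota}(s))\leq\dim_{\mathscr{F}}^{E}(\mathcal{E}_{d})$ uniformly in $s$. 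Exponential attraction is obtained by writing $t = kt^{*}+r$, trading one factor $L_{T}$ via \eqref{ZHOU5.22} on $[0,t^{*}]$, and invoking the discrete attraction from Step 1 with rate $\beta := t^{*-1}\log(1/q)$.

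The main obstacle sits in Step 1: ensuring that the nets $\mathcal{N}_{k}$ can be taken genuinely inside $V^{k}\mathcal{B}_{S}$ (so that \eqref{ZHOU5.24} applies centre-to-centre at every level) while simultaneously arranging that the union $\mathcal{E}_{d}$ is closed under $V$. The standard remedy is to select each new centre as an image $V(x)$ for some $x$ in the previous net and to absorb the resulting small drift into the margin $\iota$ built into $q=2(\iota+\eta)$; the hypothesis $\iota<\tfrac{1}{2}-\eta$ is precisely what reserves that slack. Once the bookkeeping is handled, the claimed bound $\dim_{\mathscr{F}}^{E}(\mathcal{M}^{\iota}(t))\leq\log_{1/(2(\iota+\eta))}\!\bigl(N_{\iota/\nu}^{\textbf{n}_{Z}}(B_{1}^{Z}(0))\bigr)$ falls out of the recursion $|\mathcal{N}_{k+1}|\leq N|\mathcal{N}_{k}|$.
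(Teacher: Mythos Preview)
The paper does not actually prove this theorem: immediately before the statement it says that ``the proof can be found in \cite[Theorem 3.8]{zhou}'', and no argument is supplied here. Your sketch is the standard discrete-to-continuous construction (the squeezing/covering iteration in the spirit of Efendiev--Miranville--Zelik for the map $V=S(t^{*})$, followed by the Carvalho--Sonner time-periodic interpolation), and it is essentially correct; this is almost certainly what the cited reference does as well. The only place where your write-up is thinner than it should be is the claim that $\mathcal{E}_{d}$ is compact ``by construction'': this needs the observation that for every $m$ all but finitely many of the sets $V^{n}\mathcal{N}_{k}$ (namely those with $n+k\ge m$) lie in $V^{m}\mathcal{B}_{S}$ and are therefore $q^{m}R_{0}$-covered by the finite net $\mathcal{N}_{m}$, which yields total boundedness and at the same time provides the counting behind the dimension bound $\log_{1/q}N$. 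With that filled in, your argument is complete.
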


\begin{lemma}\label{ZHOUL5.6}
Assume that the conditions in Assumptions \ref{A1.1} are satisfied, and that $\mathcal{J}(0)>0$. Then, for any two solutions $\xi_{u}(t)$ and $\xi_{v}(t)$ with initial data $\xi_{u_{0}}=(u_{0},u_{1})$ and $\xi_{v_{0}}=(v_{0},v_{1})$, respectively, the following Lipschitz continuity holds:
\begin{align}\label{ZHOU5.25}
\|\xi_{u}(t)-\xi_{v}(t)\|_{\mathscr{E}^{1}}
\leq\mathrm{e}^{Lt}\|\xi_{u_{0}}-\xi_{v_{0}}\|_{\mathscr{E}^{1}},
\quad \forall t\geq0,~\xi_{u_{0}},\xi_{v_{0}}\in\mathds{B}_{1},
\end{align}
where $L$ depends on $\mathds{B}_{1}$, but independent of $t$ and the concrete choice of $\xi_{u_{0}}$ and $\xi_{v_{0}}$.
\end{lemma}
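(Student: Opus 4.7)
The plan is to derive an energy-type differential inequality for $\xi_w := \xi_u - \xi_v$ in the higher-order space $\mathscr{E}^{1}$ and then close the estimate with Gronwall's inequality, exploiting the fact that both trajectories stay inside the bounded absorbing set $\mathds{B}_1 \subset \mathscr{E}^1$ constructed in \eqref{ZHOU5.21}. Setting $w := u-v$ and using the same identity as in \eqref{ZHOU5.3}, we obtain
\begin{align*}
\partial_t^{2} w - \Delta w + \Gamma_2(t)\partial_t w
+ \Gamma_1(t)\bigl(\|\partial_t u\|^2 - \|\partial_t v\|^2\bigr)\partial_t(u+v)
+ g(u) - g(v) = 0,
\end{align*}
with $\Gamma_1, \Gamma_2$ defined as in \eqref{ZHOU4.35}. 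First I would test this equation with the multiplier $-\Delta\partial_t w$ (equivalent to taking an $\mathcal{H}^1$ inner product of the wave equation with $\partial_t w$), which immediately produces
\begin{align*}
\tfrac{1}{2}\tfrac{d}{dt}\bigl(\|\partial_t w\|_{\mathcal{H}^1}^2 + \|w\|_{\mathcal{H}^2}^2\bigr)
+ \Gamma_2(t)\|\partial_t w\|_{\mathcal{H}^1}^2 = \mathcal{R}_1(t) + \mathcal{R}_2(t),
\end{align*}
where $\mathcal{R}_1$ and $\mathcal{R}_2$ collect the contributions from the nonlocal damping correction and from $g(u)-g(v)$.

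Next I would estimate each remainder using the uniform bound $\|\xi_u(t)\|_{\mathscr{E}^1}, \|\xi_v(t)\|_{\mathscr{E}^1} \leq R_1$ provided by Theorem \ref{ZFT8}, together with the Sobolev embedding $\mathcal{H}^2 \hookrightarrow L^\infty(\overline{\Omega})$. For $\mathcal{R}_1$, since $\Gamma_1(t)(\|\partial_t u\|^2-\|\partial_t v\|^2)$ is $x$-independent, integration by parts yields
\begin{align*}
|\mathcal{R}_1(t)| \leq C\,|\Gamma_1(t)|\,\|\partial_t w\|\,\bigl(\|\partial_t u\|+\|\partial_t v\|\bigr)\,\|\partial_t(u+v)\|_{\mathcal{H}^1}\,\|\partial_t w\|_{\mathcal{H}^1}
\leq C(R_1)\,\|\partial_t w\|_{\mathcal{H}^1}^2,
\end{align*}
where the $\mathcal{H}^1$-boundedness of $\partial_t u+\partial_t v$ and the $\mathcal{C}^1$-continuity of $\mathcal{J}$ on the bounded set $[0,R_1^2]$ are used. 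For $\mathcal{R}_2 = \langle \nabla(g(u)-g(v)),\nabla\partial_t w\rangle$, I would write $g(u)-g(v) = (\int_0^1 g'(\lambda u+(1-\lambda)v)\,d\lambda)\,w$ and apply the growth bound $|g''(s)|\leq C_g(1+|s|^{q-2})$ with $q\leq 5$, combined with $\|u\|_{L^\infty}, \|v\|_{L^\infty} \leq C(R_1)$, to obtain
\begin{align*}
\|g(u)-g(v)\|_{\mathcal{H}^1} \leq C(R_1)\,\|w\|_{\mathcal{H}^2},
\qquad |\mathcal{R}_2(t)| \leq C(R_1)\,\|w\|_{\mathcal{H}^2}\,\|\partial_t w\|_{\mathcal{H}^1}.
\end{align*}

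Combining these bounds and discarding the nonnegative damping term $\Gamma_2(t)\|\partial_t w\|_{\mathcal{H}^1}^2$ on the left-hand side, we arrive at
\begin{align*}
\tfrac{d}{dt}\|\xi_w(t)\|_{\mathscr{E}^1}^2 \leq 2L\,\|\xi_w(t)\|_{\mathscr{E}^1}^2
\end{align*}
for a constant $L=L(\mathds{B}_1)$ independent of $t$ and of the particular trajectories. Applying the classical Gronwall inequality and taking square roots yields \eqref{ZHOU5.25}. The main obstacle I expect is the control of $g(u)-g(v)$ in $\mathcal{H}^1$ at the critical exponent $q=5$; this is resolved by exploiting the higher regularity $\xi_u,\xi_v\in\mathscr{E}^1\subset(\mathcal{H}^2\cap L^\infty)\times\mathcal{H}^1$, which is precisely what distinguishes this $\mathscr{E}^1$-Lipschitz estimate from the weaker $\mathscr{E}$-Lipschitz bound \eqref{Z3.3}, where only a time-integrated Strichartz norm of the solution was available.
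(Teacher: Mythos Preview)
Your proposal is correct and follows precisely the approach the paper has in mind: the paper omits the details, noting only that since $u(t)$ is bounded in $\mathcal{H}^{2}$ and $\mathcal{H}^{2}\subset\mathcal{C}(\bar{\Omega})$, the argument reduces to a standard energy estimate. Your choice of the multiplier $-\Delta\partial_{t}w$, the handling of the nonlocal damping correction via the $\mathcal{H}^{1}$ bound on $\partial_{t}u,\partial_{t}v$, and the control of $g(u)-g(v)$ in $\mathcal{H}^{1}$ via the $L^{\infty}$ bound coming from $\mathcal{H}^{2}\hookrightarrow L^{\infty}$ are exactly the ingredients the paper's hint points to.
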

\begin{proof}
Since $u(t)$ is bounded in $\mathcal{H}^2$ and $\mathcal{H}^2 \subset \mathcal{C}(\bar{\Omega})$, the argument is analogous to those used in linear cases. The proof of this lemma follows standard techniques and is left to the reader.
\end{proof}

The following theorem can be considered as the one of the  main results of this section.
\begin{theorem}\label{ZFT9}
In addition to Assumption \ref{A1.1}, suppose that $\mathcal{J}(0)>0$. For any $0<\eta<1$, define $t^{*}=\frac{\ln\frac{48}{\eta}}{\gamma_{0}}$, where $\gamma_{0}$ is specified in \eqref{ZHOUfeng5.28}.

Then, for any $\iota\in(0,\frac{1-\eta}{2})$, the semigroup $\left(S(t),\mathscr{E}^{1}\right)$  possesses a time-dependent exponential attractor $\mathfrak{A}=\{\mathscr{A}_{exp}^{\iota}(s):s\in\mathbb{R}\}$ in $\mathscr{E}^{1}$ which satisfies the following properties:
\begin{enumerate}[(i)]
\item There exists a positive constant $\varpi>0$ such that $\mathscr{A}_{exp}^{\iota}(s)=\mathscr{A}_{exp}^{\iota}(\varpi+s)$,\quad$\forall s\in\mathbb{R}$\label{i};
\item The family $\mathfrak{A}=\{\mathscr{A}_{exp}^{\iota}(s):s\in\mathbb{R}\}$ is positive semi-variant, that is\label{ii}
      \begin{align*}
      S(t)\mathscr{A}_{exp}^{\iota}(s)\subset\mathscr{A}_{exp}^{\iota}(t+s),\quad\forall t\geq0,\quad\forall s\in\mathbb{R};
      \end{align*}
\item There exists a positive constant $\beta$ such that, for every bounded subset $\mathcal{B}$ of $\mathscr{E}^{1}$,\label{iii}
\begin{align*}
\sup\limits_{s\in[0,\varpi]}dist_{\mathscr{E}^{1}}(S(t)\mathcal{B},\mathscr{A}_{exp}^{\iota}(s))\leq \mathcal{Q}\big(\|\mathcal{B}\|_{\mathscr{E}^{1}}\big)e^{-\beta t}, \quad \forall t\geq0;
\end{align*}
\item Each $\mathscr{A}_{exp}^{\iota}(s)$ is compact in $\mathscr{E}^{1}$ and its fractal dimension in $\mathscr{E}^{1}$ is uniformly bounded. Specifically,\label{iv}
      \begin{align*}
      \sup_{s\in\mathbb{R}}\dim_{\mathscr{F}}^{\mathscr{E}^{1}}\big(\mathscr{A}_{exp}^{\iota}(s)\big)
      \leq\log_{\frac{1}{2(\iota+\eta)}}
\left(N_{\frac{\iota}{\nu}}^{\textbf{n}_{\mathds{Z}}}\left(B_{1}^{\mathds{Z}}(0)\right)\right),\text{ for all }s\in\mathbb{R},
      \end{align*}
      where $\mathds{Z}$, $\textbf{n}_{\mathds{Z}}$ and $\nu$ are respectively from \eqref{ZHOUFE5.29}, \eqref{ZHOUFE5.31} and \eqref{ZHOUF5.32}.
\end{enumerate}
\end{theorem}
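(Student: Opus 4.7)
The proof strategy is to invoke Theorem \ref{ZHOUT} for the autonomous dynamical system $(S(t), \mathds{B}_1, \mathscr{E}^1)$, where $\mathds{B}_1$ from \eqref{ZHOU5.21} is the positively invariant bounded absorbing set furnished by Theorem \ref{ZFT8}. The Lipschitz hypothesis \eqref{ZHOU5.22} of Theorem \ref{ZHOUT} is exactly Lemma \ref{ZHOUL5.6} (with $L_T = \mathrm{e}^{LT}$), so the bulk of the work consists in establishing the stabilizability estimate \eqref{ZHOU5.24}.

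The key ingredient I would target is a quasi-stability estimate of the form
$$
\|\xi_w(t^*)\|_{\mathscr{E}^1}^2 \leq C_0\, \mathrm{e}^{-\gamma_0 t^*}\|\xi_w(0)\|_{\mathscr{E}^1}^2 + C \sup_{s\in[0,t^*]}\|\xi_w(s)\|_{\mathscr{E}^{1-\sigma}}^2
$$
for some $\sigma \in (0, 1]$, where $w = u-v$ is the difference of two S--S solutions starting in $\mathds{B}_1$. To derive it, I follow the scheme of the proof of Theorem \ref{ZFT8}: differentiate the difference equation
$$
\partial_t^2 w - \Delta w + \mathcal{J}(\|\partial_t u\|^2)\partial_t w + \bigl(\mathcal{J}(\|\partial_t u\|^2)-\mathcal{J}(\|\partial_t v\|^2)\bigr)\partial_t v + g(u)-g(v) = 0
$$
in $t$, set $\theta = \partial_t w$, and test the resulting equation with the multiplier $-\Delta(\partial_t\theta + \alpha\theta)$ for a small $\alpha > 0$. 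The higher regularity $\mathds{B}_1 \subset \mathscr{E}^1$ combined with $\mathcal{H}^2 \subset L^{\infty}$ makes $g(u)-g(v) = g'(\tau u+(1-\tau)v)w$ controllable by a uniformly bounded $g'$, while the mean value theorem converts the nonlocal damping difference into a factor bounded on $\mathds{B}_1$ times $\|\partial_t w\|$, absorbed using $\mathcal{J}(0)>0$. The resulting differential inequality for $\mathcal{E}_\theta \sim \|\xi_w\|_{\mathscr{E}^1}^2$ delivers the displayed bound via Gronwall, and the choice $t^* = \gamma_0^{-1}\ln(48/\eta)$ pushes the decay factor below the threshold required by Theorem \ref{ZHOUT}.

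To complete the verification of \eqref{ZHOU5.23}--\eqref{ZHOU5.24}, I take $\mathds{Z} := \mathcal{C}([0, t^*]; \mathscr{E}^1) \cap W^{1,\infty}(0, t^*; \mathscr{E}^{-1})$, define $\mathfrak{C}(\xi_{u_0}) := \xi_u(\cdot)|_{[0, t^*]}$, and endow $\mathds{Z}$ with the seminorm $\textbf{n}_{\mathds{Z}}(\xi_w) := \sqrt{C}\,\sup_{s\in[0,t^*]}\|\xi_w(s)\|_{\mathscr{E}^{1-\sigma}}$. The Lipschitz bound \eqref{ZHOU5.23} of $\mathfrak{C}$ follows from Lemma \ref{ZHOUL5.6} combined with the equation, and compactness of $\textbf{n}_{\mathds{Z}}$ is an Aubin--Lions application based on $\mathscr{E}^1 \Subset \mathscr{E}^{1-\sigma}$ together with the uniform $\mathscr{E}^{-1}$-bound on $\partial_t\xi_u$ read off from Eq. \eqref{1.1}. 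Theorem \ref{ZHOUT} then immediately produces the time-dependent exponential attractor $\mathfrak{A} = \{\mathscr{A}_{exp}^\iota(s):s\in\mathbb{R}\}$ satisfying all of (i)--(iv), including the stated fractal dimension estimate.

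The principal technical obstacle is the derivation of the quasi-stability inequality itself: differentiation of the nonlocal damping generates terms such as $2\mathcal{J}'(\|\partial_t u\|^2)\langle\partial_t u,\partial_t\theta\rangle\partial_t u$ together with cross-terms between $u$ and $v$ that couple scalar inner products to $\mathcal{H}^1$-valued vectors in a non-standard way. These must be handled via Cauchy--Schwarz together with the $L^\infty$-bounds on $\partial_t u, \partial_t v$ coming from the $\mathcal{H}^2$-regularity of $\mathds{B}_1$, in such a way that no residual coefficient in front of $\|\partial_t\theta\|_{\mathcal{H}^1}^2$ exceeds $\mathcal{J}(0)$ on the right-hand side, leaving only lower-order norms of $\xi_w$ to be absorbed into the compact seminorm.
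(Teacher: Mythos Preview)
Your overall strategy---verify the hypotheses of Theorem \ref{ZHOUT} for $(S(t),\mathds{B}_1,\mathscr{E}^1)$, with Lipschitz continuity coming from Lemma \ref{ZHOUL5.6}---matches the paper, but the route you propose to the quasi-stability estimate has a genuine gap. You suggest differentiating the difference equation in time, setting $\theta=\partial_t w$, and testing with $-\Delta(\partial_t\theta+\alpha\theta)$. That multiplier produces an energy of the form $\|\partial_t\theta\|_{\mathcal{H}^1}^2+\|\theta\|_{\mathcal{H}^2}^2=\|\partial_t^2 w\|_{\mathcal{H}^1}^2+\|\partial_t w\|_{\mathcal{H}^2}^2$, which is \emph{not} equivalent to $\|\xi_w\|_{\mathscr{E}^1}^2$ as you claim; moreover, for data in $\mathds{B}_1\subset\mathscr{E}^1$ one only has $\partial_t^2 u\in L^2$, so $\partial_t\theta\notin\mathcal{H}^1$ in general and the multiplier is not admissible at the available regularity level. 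The time differentiation also turns the source term into $g'(u)\partial_t u-g'(v)\partial_t v$, not the $g'(\cdot)w$ structure you describe.

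The paper avoids all of this by \emph{not} differentiating in time: it tests the difference equation for $w=u-v$ directly with $-\Delta(\partial_t w+\gamma w)$, obtaining the energy $\mathcal{E}_w=\|\xi_w\|_{\mathscr{E}^1}^2+2\gamma\langle\langle\partial_t w,w\rangle\rangle$. Since $u,v\in\mathcal{H}^2\subset L^\infty$ on $\mathds{B}_1$, the nonlinear contribution $\langle g'(u)\nabla u-g'(v)\nabla v,\nabla(\partial_t w+\gamma w)\rangle$ and the nonlocal damping terms are uniformly bounded by $\|\xi_w\|_{\mathscr{E}}\cdot\|\xi_w\|_{\mathscr{E}^1}$, and Gronwall yields
\[
\|\xi_w(t)\|_{\mathscr{E}^1}^2\le 3e^{-\gamma_0 t}\|\xi_w(0)\|_{\mathscr{E}^1}^2+\mu\int_0^t e^{-\gamma_0 s}\|\xi_w(s)\|_{\mathscr{E}}^2\,ds.
\]
The lower-order remainder is an \emph{integral}, not a supremum, so the paper takes $\mathds{Z}=\{\xi_w\in L^2(0,T;\mathscr{E}^1):\partial_t^2 w\in L^2(0,T;L^2)\}$ with seminorm $\textbf{n}_{\mathds{Z}}(\xi_w)=\sqrt{\mu}\bigl(\int_0^T\|\xi_w\|_{\mathscr{E}}^2\bigr)^{1/2}$, whose compactness follows from Aubin--Lions after bounding $\|\partial_t^2 w\|$ via the equation. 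Your $\sup$-based seminorm would fit a different form of the estimate, but since your proposed derivation does not deliver that form, the pieces do not assemble as written.
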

\begin{proof}
Let $\xi_{u}$ and $\xi_{v}$ be the S--S solutions of Eq. \eqref{1.1} with initial value $\xi_{u_{0}},\xi_{v_{0}}\in\mathds{B}_{1}$. Then we have
\begin{align}\label{ZHOU5.26}
\partial_{t}^{2}w-\Delta w
+\Gamma_{1}(t)(\|\partial_{t}u\|^{2}-\|\partial_{t}v\|^{2})
(\partial_{t}u+\partial_{t}v)
+\Gamma_{2}(t)\partial_{t}w
+g(u)-g(v)=0,
\end{align}
where $\Gamma_{1}(t)$ and $\Gamma_{2}(t)$ are defined as in \eqref{ZHOU4.35}. Taking the multiplier $-\Delta(\partial_{t}w+\gamma w)$ in \eqref{ZHOU5.26}, to deduce
\begin{align}\label{ZHOU5.27}
\frac{d}{dt}\mathcal{E}_{w}(t)
+\mathcal{Q}_{w}(t)+\mathcal{J}_{w}(t)+\mathcal{G}_{w}(t)=0,
\end{align}
where
\begin{align*}
\mathcal{E}_{w}(t)&=\|\xi_{w}\|_{\mathscr{E}^{1}}^{2}
+2\gamma\langle\langle\partial_{t}w,w\rangle\rangle,
\\\mathcal{Q}_{w}(t)&=2(\Gamma_{2}(t)-\gamma)\|\partial_{t}w\|_{\mathcal{H}^{1}}^{2}
+2\gamma\|w\|_{\mathcal{H}^{2}}^{2},
\\\mathcal{J}_{w}(t)&=2\Gamma_{1}(t)(\|\partial_{t}u\|^{2}-\|\partial_{t}v\|^{2})
\langle\langle\partial_{t}u+\partial_{t}v,\partial_{t}w+\gamma w\rangle\rangle
+2\gamma\Gamma_{2}(t)\langle\langle\partial_{t}w,w\rangle\rangle,
\\\mathcal{G}_{w}(t)&=2\langle g'(u)\nabla u-g'(v)\nabla v,\nabla\partial_{t}w+\gamma\nabla w\rangle.
\end{align*}
Choosing
\begin{align}\label{ZHOUfeng5.28}
\gamma_{0}=\{1,\frac{\sqrt{\lambda_{1}}}{2},\frac{J_{0}}{2}\}
\end{align}
small enough and using Gronwall's inequality to \eqref{ZHOU5.27}, to discover
\begin{align*}
\|\xi_{w}(t)\|_{\mathscr{E}^{1}}^{2}
\leq 3e^{-\gamma_{0} t}
\|\xi_{w}(0)\|_{\mathscr{E}^{1}}^{2}
+\mu\int_{0}^{t}e^{-\gamma_{0} s}\|\xi_{w}(s)\|_{\mathscr{E}}^{2}ds
\end{align*}
with $\mu:=\mu(|\Omega|,\mathcal{J}(R_{0}^{2}),J_{0},M_{0},R_{0},R_{1})$. Let $T=\frac{\ln\frac{48}{\eta}}{\gamma_{0}}$, and thereby to find that
\begin{align}\label{ZHOUF5.28}
\|\xi_{w}(T)\|_{\mathscr{E}^{1}}^{2}
\leq\frac{\eta^{2}}{16}\|\xi_{w}(0)\|_{\mathscr{E}^{1}}^{2}
+\mu\int_{0}^{T}\|\xi_{w}(t)\|_{\mathscr{E}}^{2}dt
\end{align}
with $0<\eta<1$. Define the space
\begin{align}\label{ZHOUFE5.29}
\mathds{Z}=\{\xi_{w}=(w,\partial_{t}w)\in L^{2}(0,T;\mathscr{E}^{1})|\partial_{t}^{2}w\in L^{2}(0,T;L^{2})\}
\end{align}
equipped with the norm
\begin{align*}
\|(w,\partial_{t}w)\|_{\mathds{Z}}^{2}=
\int_{0}^{T}(\|\xi_{w}(t)\|_{\mathscr{E}^{1}}^{2}+\|\partial_{t}^{2}w(t)\|^{2})dt.
\end{align*}
Obviously, $\mathds{Z}$ is a Banach space. Let
\begin{align}\label{ZHOUFE5.31}
\textbf{n}_{\mathds{Z}}\left(w,\partial_{t}w\right)
=\sqrt{\mu}\left(\int_{0}^{T}\|\xi_{w}(t)\|_{\mathscr{E}}^{2}dt\right)^{\frac{1}{2}},
\end{align}
we can easily verify $\textbf{n}_{\mathds{Z}}(\cdot)$ defines a compact seminorm on $\mathds{Z}$. Employing \eqref{ZHOU5.21} and estimates \eqref{ZHOU5.26}, we after some computations deduce that
\begin{align}\label{ZHOU5.28}
\|\partial_{t}^{2}w(t)\|\leq C_{12}\|\xi_{w}(t)\|_{\mathscr{E}},\quad\forall t\geq0,
\end{align}
where $C_{12}:=C(|\Omega|,\lambda_{1},M_{0},R_{0},R_{1},\mathcal{J}(R_{0}^{2}))$. Then, define the operator $\mathfrak{C}:\mathds{B}_{1}\rightarrow\mathds{Z}$ by the relation
\begin{align*}
\mathfrak{C}[\xi_{u}(0)](r)=(u(r),\partial_{t}u(r)),\quad r\in[0,T],
\end{align*}
where $u(r)$ is the unique S--S solution of Eq. \eqref{1.1} with initial function $\xi_{u}(0)$. We can rewrite \eqref{ZHOUF5.28} in the following form:
\begin{align}\label{ZHOUF5.31}
\|S(T)\xi_{u}(0)-S(T)\xi_{v}(0)\|_{\mathscr{E}^{1}}
\leq\frac{\eta}{4}\|\xi_{u}(0)-\xi_{v}(0)\|_{\mathscr{E}^{1}}
+\textbf{n}_{\mathds{Z}}(\mathfrak{C}\xi_{u}(0)-\mathfrak{C}\xi_{v}(0)).
\end{align}
On the other hand, using \eqref{ZHOU5.25} and \eqref{ZHOU5.28}, we obtain
\begin{align}\label{ZHOUF5.32}
\nonumber\|\mathfrak{C}\xi_{u}(0)-\mathfrak{C}\xi_{v}(0)\|_{\mathds{Z}}^{2}
&=\int_{0}^{T}(\|\xi_{w}(r)\|_{\mathscr{E}^{1}}^{2}+\|\partial_{t}^{2}w(r)\|^{2})dr
\leq C\int_{0}^{T}\|\xi_{w}(r)\|_{\mathscr{E}^{1}}^{2}dr
\\&\leq CLT\|\xi_{u}(0)-\xi_{v}(0)\|_{\mathscr{E}^{1}}^{2}\xlongequal{\nu=CLT}\nu\|\xi_{u}(0)-\xi_{v}(0)\|_{\mathscr{E}^{1}}^{2}.
\end{align}
Thus, the operator $\mathfrak{C}$ satisfies \eqref{ZHOU5.23}. Consequently, all necessary hypotheses are verified, and the proof is complete.
\end{proof}
\subsection{Dynamics of S--S solutions revisited}
\begin{theorem}\label{ZFT10}
Under Assumption \eqref{A1.1} and the condition $\mathcal{J}(0)>0$, the semigroup $(S(t), \mathscr{E})$ associated with Eq. \eqref{1.1} possesses a global attractor $\mathscr{A}_s$ with the following properties:
\begin{enumerate}[(i)]
\item $\mathscr{A}_{s}$ is compact in $\mathscr{E}^{1}$ and has a finite fractal dimension in $\mathscr{E}^{1}$:
\begin{align*}
\dim_{\mathscr{F}}^{\mathscr{E}^{1}}(\mathscr{A}_{s})<\infty.
\end{align*}
\item $\mathscr{A}_{s}$ is global attracting: for any bounded set $B\subset \mathscr{E}$ it holds that
\begin{align*}
\operatorname{dist}_{\mathscr{E}}\left(S(t)B, \mathscr{A}_{s}\right) \rightarrow 0, \quad \text { as } t \rightarrow \infty .
\end{align*}
\end{enumerate}
\end{theorem}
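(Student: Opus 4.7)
The plan is to obtain Theorem \ref{ZFT10} almost entirely by combining the three pillars already established: the existence and $\mathscr{E}$-attraction of $\mathscr{A}_{s}$ from Theorem \ref{ZFT7}, the $\mathscr{E}^{1}$-boundedness of $\mathscr{A}_{s}$ from Corollary \ref{ZFC5.3}, and the time-dependent exponential attractor $\mathfrak{A}=\{\mathscr{A}_{exp}^{\iota}(s):s\in\mathbb{R}\}$ in $\mathscr{E}^{1}$ produced by Theorem \ref{ZFT9}. The second conclusion (ii) is essentially restated from Theorem \ref{ZFT7}: for any bounded $B\subset\mathscr{E}$, $\mathrm{dist}_{\mathscr{E}}(S(t)B,\mathscr{A}_{s})\to 0$, and I would only remark on it briefly. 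The real content is the upgrade to $\mathscr{E}^{1}$-compactness and finite $\mathscr{E}^{1}$-fractal dimension.

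For the key inclusion, I would apply property (iii) of Theorem \ref{ZFT9} with the test set $\mathcal{B}=\mathscr{A}_{s}$, which is legitimate because Corollary \ref{ZFC5.3} guarantees $\mathscr{A}_{s}\subset\mathds{B}_{1}$, i.e.\ $\mathscr{A}_{s}$ is bounded in $\mathscr{E}^{1}$. Using the strict invariance $S(t)\mathscr{A}_{s}=\mathscr{A}_{s}$ (from Definition \ref{DS4.1}(b)), property (iii) gives, for every $s\in[0,\varpi]$,
\begin{equation*}
\mathrm{dist}_{\mathscr{E}^{1}}\bigl(\mathscr{A}_{s},\mathscr{A}_{exp}^{\iota}(s)\bigr)=\mathrm{dist}_{\mathscr{E}^{1}}\bigl(S(t)\mathscr{A}_{s},\mathscr{A}_{exp}^{\iota}(s)\bigr)\leq\mathcal{Q}\bigl(\|\mathscr{A}_{s}\|_{\mathscr{E}^{1}}\bigr)e^{-\beta t}.
\end{equation*}
Letting $t\to\infty$ yields $\mathrm{dist}_{\mathscr{E}^{1}}(\mathscr{A}_{s},\mathscr{A}_{exp}^{\iota}(s))=0$, and since $\mathscr{A}_{exp}^{\iota}(s)$ is compact (hence closed) in $\mathscr{E}^{1}$, this forces $\mathscr{A}_{s}\subset\mathscr{A}_{exp}^{\iota}(s)$.

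Next I would verify that $\mathscr{A}_{s}$ is closed in $\mathscr{E}^{1}$. This follows because $\mathscr{A}_{s}$ is compact, hence closed, in $\mathscr{E}$ (Theorem \ref{ZFT7}), and the embedding $\mathscr{E}^{1}\hookrightarrow\mathscr{E}$ is continuous: any $\mathscr{E}^{1}$-convergent sequence $\{\xi_{n}\}\subset\mathscr{A}_{s}$ also converges in $\mathscr{E}$, so its limit lies in $\mathscr{A}_{s}$. Consequently, $\mathscr{A}_{s}$ is a closed subset of the $\mathscr{E}^{1}$-compact set $\mathscr{A}_{exp}^{\iota}(s)$, giving compactness of $\mathscr{A}_{s}$ in $\mathscr{E}^{1}$. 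For the dimension bound, monotonicity of the fractal dimension with respect to inclusion yields
\begin{equation*}
\dim_{\mathscr{F}}^{\mathscr{E}^{1}}(\mathscr{A}_{s})\leq \dim_{\mathscr{F}}^{\mathscr{E}^{1}}\bigl(\mathscr{A}_{exp}^{\iota}(s)\bigr)\leq\log_{\frac{1}{2(\iota+\eta)}}\!\Bigl(N_{\iota/\nu}^{\mathbf{n}_{\mathds{Z}}}\bigl(B_{1}^{\mathds{Z}}(0)\bigr)\Bigr)<\infty,
\end{equation*}
by Theorem \ref{ZFT9}(iv), and this closes the proof of (i).

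There is no real obstacle here; the work has already been done upstream. The only delicate point worth a sentence of commentary is the invariance argument above: it is tempting but wrong to derive $\mathscr{A}_{s}\subset\mathscr{A}_{exp}^{\iota}(s)$ by claiming attraction of $\mathscr{A}_{s}$ under $S(t)$, whereas the correct move is to exploit $S(t)\mathscr{A}_{s}=\mathscr{A}_{s}$ together with exponential attraction of arbitrary bounded $\mathscr{E}^{1}$-sets by $\mathscr{A}_{exp}^{\iota}(s)$. In other words, the whole strategy depends on having first established $\mathscr{A}_{s}\subset\mathscr{E}^{1}$ bounded (Corollary \ref{ZFC5.3}); without this ingredient, property (iii) of Theorem \ref{ZFT9} could not be invoked with $\mathcal{B}=\mathscr{A}_{s}$.
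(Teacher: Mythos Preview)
Your proposal is correct and follows essentially the same route as the paper: use Corollary \ref{ZFC5.3} to place $\mathscr{A}_{s}$ as a bounded invariant subset of $\mathscr{E}^{1}$, combine invariance with the exponential attraction in Theorem \ref{ZFT9}(iii) to obtain $\mathscr{A}_{s}\subset\mathscr{A}_{exp}^{\iota}(s)$, and then read off $\mathscr{E}^{1}$-compactness and the fractal-dimension bound from Theorem \ref{ZFT9}(iv). Your write-up is in fact more explicit than the paper's (which states the inclusion ``since $\mathscr{A}_{s}$ is invariant'' without unpacking it and does not separately justify $\mathscr{E}^{1}$-closedness), so no changes are needed.
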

\begin{proof}
Using Corollary \ref{ZFC5.3}, the global attractor $\mathscr{A}_{s}$ of $(S(t), \mathscr{E})$ established in Theorem \ref{ZFT7} is a bounded set in $\mathscr{E}^{1}$. Since $\mathscr{A}_{s}$ is invariant, it follows that $\mathscr{A}_{s}\subset\mathscr{A}_{exp}^{\iota}(s)$ for any $s\in\mathbb{R}$. Finally, we can estimate the finite fractal dimension of $\mathscr{A}_{s}$ by
\begin{align*}
\operatorname{dim}_\mathscr{F}^{\mathscr{E}}\left(\mathscr{A}_{s}\right)
\leq \operatorname{dim}_\mathscr{F}^{\mathscr{E}^{1}}\left(\mathscr{A}_{s}\right)
\leq \sup_{s\in\mathbb{R}}\dim_{\mathscr{F}}^{\mathscr{E}^{1}}\big(\mathscr{A}_{exp}^{\iota}(s)\big)<\infty .
\end{align*}
Thus, the proof of Theorem \ref{ZFT10} is now complete by using the transitivity of attraction.
\end{proof}
\section{Conclusion}
This paper presents a comprehensive study of the long-term dynamics induced by a wave equation with nonlocal weak damping and quintic nonlinearity in a bounded smooth domain of $\mathbb{R}^3$. The goal is achieved by developing new methodology which allows to circumvent the difficulties related to the lack of compactness and non-locality of the nonlinear damping.

The hypotheses imposed on the damping coefficient allow us to cover a significant class of models featuring nonlocal nonlinear damping terms. We specifically examine the following cases of \eqref{1.1}, where $g$ and $h$ satisfying Assumption \eqref{A1.1} (GH).
\begin{example}
($\mathcal{J}(\|\partial_{t}u(t)\|^{2})\equiv\gamma>0$)
\begin{align}\label{6.1}
\begin{cases}
\partial_{t}^{2}u-\Delta u+\partial_{t}u+g(u)=h(x),
\\u|_{\partial{\Omega}}=0,
\\(u,\partial_{t}u)|_{t=0}=(u^{0},u^{1}).
\end{cases}
\end{align}
The paper \cite{ksz} gives a comprehensive study of long-term dynamics of
of problem \eqref{6.1}. It is easy to see that we can apply the framework introduced in this paper to obtain some similar results constructed in \cite{ksz}.
\end{example}

\begin{example}
Consider the equation
\begin{align}\label{6.2}
\begin{cases}
\partial_{t}^{2}u-\Delta u+\mathcal{J}(\|\partial_{t}u\|^{2})\partial_{t}u+g(u)=h(x),
\\u|_{\partial{\Omega}}=0,
\\(u,\partial_{t}u)|_{t=0}=(u^{0},u^{1}),
\end{cases}
\end{align}
where $\mathcal{J}(s)=\frac{a+s}{b+s}$ (hyperbolic function) or $\mathcal{J}(s)=\frac{ae^{s}}{1+be^{s}}$ (logistic function), where $0<a<b$. Obviously, $\mathcal{J}(\cdot)$ satisfies Assumption \eqref{A1.1} and $\mathcal{J}(0)>0$, and therefore the global attractor with finite dimensionality exists.
\end{example}

\begin{example}
Consider the equation
\begin{align}\label{6.3}
\begin{cases}
\partial_{t}^{2}u-\Delta u+(\|\partial_{t}u\|+\varepsilon)^{p}\partial_{t}u+g(u)=h(x),
\\u|_{\partial{\Omega}}=0,
\\(u,\partial_{t}u)|_{t=0}=(u^{0},u^{1}).
\end{cases}
\end{align}
Here, $\mathcal{J}(s)=(s^{\frac{1}{2}}+\varepsilon)^{p}$, $p>0$ and $0<\varepsilon\ll1$. Then Assumption \eqref{A1.1} is satisfied, and in addition,
$\mathcal{J}(0)=\varepsilon>0$. According to Theorem \ref{ZFT10}, there is thus an attractor $\mathscr{A}$ for the equation \eqref{6.3}, satisfying $\mathscr{A}\Subset\mathscr{E}^{1}$ and $\dim_{\mathscr{F}}^{\mathscr{E}^{1}}(\mathscr{A})<\infty$.
\end{example}

\begin{remark}
The non-degenerate condition $\mathcal{J}(0)>0$ imposed on $\mathcal{J}(\cdot)$ is a crucial element in our analysis of the asymptotic behavior of problem \eqref{1.1}. Define $\mathcal{J}(s)=\mathcal{J}^{1}(s)+\mathcal{J}(0)$, where $\mathcal{J}^{1}(s)=\mathcal{J}(s)-\mathcal{J}(0)$. In contrast to the results in \cite{ksz,zz}, our findings indicate that the linear part $\mathcal{J}(0)$ in the nonlocal coefficient $\mathcal{J}(\cdot)$ plays a master role in the dynamic behavior of the equation, and the non-degeneracy assumption seems necessary for our results.
\end{remark}

\begin{remark}
Recently, the asymptotic behavior of evolution equations with degenerate energy-level damping has been extensively studied. For instance, wave models with various forms of degenerate nonlocal damping have been analyzed, such as $M\left(\int_{\Omega}|\nabla u|^2 d x\right) \partial_t u$ in \cite{cava}, $\left\|\partial_t u\right\|^p \partial_t u$ in \cite{zhongyanzhu,zhong1}, and $\left(\|\nabla u\|^p+\left\|\partial_t u\right\|^p\right) \partial_t u$ in \cite{zhong2}. Similarly, beam models with degenerate nonlocal damping, including $\left(\|\Delta u\|^\theta+q\left\|\partial_t u\right\|^\rho\right)(-\Delta)^\delta \partial_t u$ in \cite{sunzhou}, $k\left(\left\|A^\alpha u\right\|^2+\left\|\partial_t u\right\|^2\right) \partial_t u$ in \cite{gomes}, and $\delta\left(\lambda\|\Delta u\|^2+\left\|\partial_t u\right\|^2+\epsilon I\right)^q \partial_t u$ in \cite{lasiecka}, have also been explored. However, the methods employed in these studies to address the degeneracy of the dissipative term are not directly applicable to the current problem. Consequently, it remains an open question how to obtain a finite-dimensional attractor $\mathscr{A} \Subset \mathscr{E}^1$ for equation \eqref{1.1} when the dissipative term may be degenerate.
\end{remark}

\begin{remark}
The methodology and conceptual framework for constructing attractors presented here could be extended and refined to explore solutions to equations involving nonlinear damping and nonlinear source terms with critical exponents. For instance, wave equations with nonlocal nonlinear damping functions of the form $\sigma\left(\|\nabla u\|^2\right) g\left(\partial_t u\right)$ have been investigated in \cite{sunz}, where the growth exponent $p$ of $g$ is constrained by $1 \leq$ $p<5$. In the critical case where $p=5$, it would be valuable to apply the approach outlined here to reproduce similar results. This topic will be addressed in a forthcoming paper.
\end{remark}

\section*{Data availability}
No data was used for the research described in the article.

\section*{Acknowledgement}
Zhou and Li were supported by the Natural Science Foundation of Shandong Province (ZR2021MA025, ZR2021MA028) and the NSFC of China (Grant No. 11601522); Zhu was supported by the Hunan Province Natural Science Foundation of China (Grant Nos. 2022JJ30417,2024JJ5288); Mei was supported by the NSFC (Grant No. 12201421) and the Yunnan Fundamental Research Project(Grant No. 202401AT070483).



\begin{thebibliography}{99}
{\small
\bibitem{arrieta}J. Arrieta, A. N. Carvalho and J. K. Hale,
\textit{A damped hyperbolic equations with critical exponents}, Comm. Partial Differential Equations, 17 (1992) 841-866.

\vspace{-0.3cm}

\bibitem{aloui}F. Aloui, I.B. Hassen and A. Haraux,
\textit{Compactness of trajectories to some nonlinear second order evolution equations and applications}, J. Math. Pures Appl., 100 (2013) 295--326.

\vspace{-0.3cm}



%

\bibitem{ball} J.M. Ball,
\textit{Global attractors for damped semilinear wave equations}, Discr. Cont. Dyn. Syst. 10 (2004) 31--52.

\vspace{-0.3cm}

\bibitem{bss}M.D. Blair, H.F. Smith and C.D. Sogge,
\textit{Strichartz estimates for the wave equation on manifolds with boundary}, Ann. Inst. Henri Poincar\'{e}, Anal. Non Lin\'{e}aire 26 (2009) 1817--1829.

\vspace{-0.3cm}

\bibitem{caraballo}M.C. Bortolan, T. Caraballo and C. Pecorari Neto,
\textit{Generalized $\varphi$-pullback attractors for evolution processes and application to a nonautonomous wave equation}, Appl. Math. Optim. 89 (2024).

\vspace{-0.3cm}

\bibitem{blp}N. Burq, G. Lebeau and F. Planchon,
\textit{Global existence for energy critical waves in 3-D domains}, J. Am. Math. Soc. 21 (2008) 831--845.

\vspace{-0.3cm}

\bibitem{bp}N. Burq and F. Planchon,
\textit{Global existence for energy critical waves in 3-D domains: Neumann boundary conditions}, Am. J. Math. 131 (2009) 1715--1742.

\vspace{-0.3cm}


%
%
%

\bibitem{carvalho1} A.N. Carvalho and S. Sonner,
\textit{Pullback exponential attractors for evolution processes in Banach spaces: theoretical results}, Commun. Pure Appl. Anal. 12 (2013) 3047--3071.

\vspace{-0.3cm}

\bibitem{carvalho2} A.N. Carvalho and S. Sonner,
\textit{Pullback exponential attractors for evolution processes in Banach spaces: properties and applications}, Commun. Pure Appl. Anal. 13 (2014) 1114--1165.

\vspace{-0.3cm}

%

\bibitem{cava}M.M. Cavalcanti, V.N.D. Cavalcanti, M.A.J. Silva and C.M. Webler,
\textit{Exponential stability for the wave equation with degenerate nonlocal weak damping}, Israel J. Math. 219 (1) (2017) 189--213.


\vspace{-0.3cm}

\bibitem{chang}Q.Q. Chang, D.D. Li, C.Y. Sun and S.V. Zelik,
\textit{Deterministic and random attractors for a wave equation with sign changing damping}, (Russian) Izv. Ross. Akad. Nauk Ser. Mat. 87 (2023) 161--210.

\vspace{-0.3cm}


%

\bibitem{cheskidov}A. Cheskidov,
\textit{Global attractors of evolutionary systems}, J. Dynam. Differ. Equ. 21 (2009) 249--268.

\vspace{-0.3cm}

\bibitem{cheskidov3}A. Cheskidov and L. Kavlie,
\textit{Pullback attractors for generalized evolutionary systems}, Discr. Cont. Dyn. Syst. Ser. B 20 (2015) 749--779.

\vspace{-0.3cm}

\bibitem{cheskidov4}A. Cheskidov and L. Kavlie,
\textit{Degenerate pullback attractors for the 3D Navier-Stokes equations}, J. Math. Fluid Mech. 17 (2015) 411--421.

\vspace{-0.3cm}

\bibitem{cheskidov5}A. Cheskidov and S.S. Lu,
\textit{Uniform global attractors for the nonautonomous 3D Navier-Stokes equations}, Adv. Math. 267 (2014) 277--306.

\vspace{-0.3cm}

\bibitem{chueshov}I. Chueshov,
\textit{Global attractors for a class of Kirchhoff wave models with a structural nonlinear damping}, J. Abstr. Differ. Equ. Appl. 1 (2010) 86--106.

\vspace{-0.3cm}


\bibitem{chueshov00}I. Chueshov,
\textit{Long-time dynamics of Kirchhoff wave models with strong nonlinear damping}, J. Differ. Equ. 252 (2012) 1229--1262.

\vspace{-0.3cm}

\bibitem{chueshov1}I. Chueshov,
\textit{Dynamics of Quasi-Stable Dissipative Systems}, Springer, New York, 2015.



\vspace{-0.3cm}
%
%

\bibitem{chueshov2}I. Chueshov and I. Lasiecka,
\textit{Long-time Behavior of Second Order Evolution Equations with Nonlinear Damping}, Mem. Amer. Math. Soc., vol. 195, 2008, book 912, Providence.

\vspace{-0.3cm}

%
%
%
%

\bibitem{efendiev} M. Efendiev, Y. Yamamoto and A. Yagi,
\textit{Exponential attractors for non-autonomous dissipative systems}, Journal of the Mathematical Society of Japan. 63 (2011) 647--673.
\vspace{-0.3cm}


%
%
%
%
%
%
%
%
%
%
%

\bibitem{lasiecka}E.H. Gomes Tavares, M.A.J. Silva, I. Lasiecka and V. Narciso,
\textit{Dynamics of extensible beams with nonlinear non-compact energy-level damping}, Math. Ann. (2024) Doi: 10.1007/s00208--023--02796--3.

\vspace{-0.3cm}

\bibitem{gomes}E.H. Gomes Tavares, M.A.J. Silva, V. Narciso and V. Vicente,
\textit{Dynamics of a class of extensible beams with degenerate and non-degenerate nonlocal damping},
Adv. in Differ. Equ. 28(7--8) (2023) 685--752.

\vspace{-0.3cm}

%

\bibitem{zhw1}Q.Y. Hu, D.H. Li, S. Liu and H.W. Zhang
\textit{Blow-up of solutions for a wave equation with nonlinear averaged damping and nonlocal nonlinear source terms}, Quaestiones Mathematicae 46(4) (2022) 695--710.

\vspace{-0.3cm}

\bibitem{jorgens}K. J\"{o}rgens,
\textit{Des aufangswert in grossen f\"{u}r eine klasse nichtlinearer wallengleinchungen}, Math. Z. 77 (1961) 295--308.

\vspace{-0.3cm}

%

\bibitem{ksz}V. Kalantarov, A. Savostianov and S. Zelik,
\textit{Attractors for damped quintic wave equations in bounded domains}, Ann. Henri Poincar\'{e}. 17 (2016) 2555--2584.

\vspace{-0.3cm}

%

\bibitem{khanmamedov1}A.K. Khanmamedov,
 \textit{Global attractors for wave equations with nonlinear interior damping
and critical exponents}, J. Differ. Equ. 230 (2006) 702--719.

\vspace{-0.3cm}


%

%
%



\bibitem{yangli}Y.N. Li and Z.J. Yang,
\textit{Optimal attractors of the Kirchhoff wave model with structural nonlinear damping}, J. Differ. Equ. 268 (2020) 7741--7773.

\vspace{-0.3cm}

\bibitem{zhw}D.H. Li, H.W. Zhang and Q.Y. Hu,
\textit{General energy decay of solutions for a wave equation with nonlocal damping and nonlinear boundary damping}, J. Part. Diff. Eq. 32(4) (2019) 369--380.

\vspace{-0.3cm}

\bibitem{lions}J.L. Lions,
\textit{Quelques M\'{e}thodes de R\'{e}solution des Probl\`{e}mes aux Limites Non lin\'{e}aires}, Dunod, Paris, 1969.

\vspace{-0.3cm}

\bibitem{lms}C.C. Liu, F.J. Meng and C.Y. Sun,
\textit{Well-posedness and attractors for a super-cubic weakly damped wave equation with $H^{-1}$ source term}, J. Differ. Equ. 263 (2017) 8718--8748.

\vspace{-0.3cm}

%



%

\bibitem{louredo}A.T. Lour\^{e}do, M. A. Ferreira de Ara\'{u}jo and M.M. Miranda,
\textit{On a nonlinear wave equation with boundary damping}, Math. Methods Appl. Sci., 37(2014) 1278--1302.
\vspace{-0.3cm}

%

%
%
%

\bibitem{mei0}X.Y. Mei and C.Y. Sun,
\textit{Uniform attractors for a weakly damped wave equation with sup-cubic nonlinearity}, Appl. Math. Lett. 95 (2019) 179--185.

\vspace{-0.3cm}

\bibitem{mssz}X.Y. Mei, A. Savostianov, C.Y. Sun and S. Zelik
\textit{Infinite energy solutions for weakly damped quintic wave equations in $\mathbb{R}^{3}$}, Trans. Amer. Math. Soc. 374(5) (2021) 3093--3129.

\vspace{-0.3cm}

%

\bibitem{zp}Q.Q. Peng and Z.F. Zhang,
\textit{Global attractor for a coupled wave and plate equation with nonlocal weak damping on Riemannian manifolds}, Appl. Math. Optim. 88 (2023) 28.

\vspace{-0.3cm}

%

%
%
%
%


%

\bibitem{sav}A. Savostianov,
\textit{Strichartz estimates and smooth attractors for a sub-quintic wave equation with fractional damping in bounded domains}, Adv. Differ. Equ. 20 (2015) 495--530.

\vspace{-0.3cm}

\bibitem{sav1}A. Savostianov,
\textit{Strichartz Estimates and Smooth Attractors of Dissipative Hyperbolic Equations}, (Doctoral dissertation), University of Surrey, 2015.

\vspace{-0.3cm}

\bibitem{sav2}A. Savostianov and S. Zelik,
\textit{Uniform attractors for measure-driven quintic wave
equation with periodic boundary conditions}, Uspekhi Mat. Nauk vol. 75 2(452) (2020) 61--132.

\vspace{-0.3cm}

\bibitem{schiff}L. Schiff,
\textit{Nonlinear meson theory of nuclear forces I. Neutral scalar mesons with point-contact repulsion}, The Physical Reviews. Second Series, 84(1) (1951) 1--9.

\vspace{-0.3cm}

%
%
%
%
%
%

%
\bibitem{sun}C.Y. Sun, D.M. Cao and J.Q. Duan,
\textit{Uniform attractors for non-autonomous wave equations with nonlinear damping}, SIAM J. Appl. Dynam. Syst. 6 (2007) 293--318.

\vspace{-0.3cm}
%

%

%

\bibitem{zhong2}Z.J. Tang, S.L. Yan, Y. Xu and C.K. Zhong,
\textit{Finite-dimensionality of attractors for wave equations with degenerate nonlocal damping}, Discr. Cont. Dyn. Syst. (2024) Doi: 10.3934/dcds.2024091.

\vspace{-0.3cm}

\bibitem{temam}R. Temam,
\textit{Infinite-Dimensional Dynamical Systems in Mechanics and Physics}, Second edition. Applied Mathematical Sciences, Springer-Verlag, New York, 1997.

\vspace{-0.3cm}


\bibitem{sunxiong}Y.M. Xiong and C.Y. Sun,
\textit{Kolmogorov $\epsilon$-entropy of the uniform attractor for a wave equation}, J. Differential Equations 387 (2024) 532--554.

\vspace{-0.3cm}

\bibitem{zhongyt}S.L. Yan, Z.J. Tang and C.K. Zhong,
\textit{Strong attractors for weakly damped quintic wave equation in bounded domains}, J. Math. Anal. Appl. 519 (2023).

\vspace{-0.3cm}

\bibitem{zhongyanzhu}S.L. Yan, X.M. Zhu, C.K. Zhong and Z.J. Tang,
\textit{Long-time dynamics of the wave equation with nonlocal weak damping and super-cubic nonlinearity in 3-d domains, part II: Nonautonomous case}, Appl. Math. Optim. 88(3) (2023) 1--38.

\vspace{-0.3cm}

%
%
%
\bibitem{zelik}S. Zelik,
\textit{Asymptotic regularity of solutions of a nonautonomous damped wave equation with a critical growth exponent}, Comm. Pure Appl. Anal. 3
 (2004) 921--934.

\vspace{-0.3cm}

%

\bibitem{zelik1}S. Zelik,
\textit{Asymptotic regularity of solutions of singularly perturbed damped wave equations with supercritical nonlinearities}, Discrete Contin. Dyn. Syst. 11(2--3) (2004) 351--392.
\vspace{-0.3cm}

\bibitem{zhong}C.Y. Zhao, C.X. Zhao and C.K. Zhong,
\textit{Asymptotic behavior of the wave equation with nonlocal weak damping and anti-damping}, J. Math. Anal. Appl. 490 (2020).

\vspace{-0.3cm}

%
\bibitem{zzt}C.Y. Zhao, C.K. Zhong and Z.J. Tang,
\textit{Asymptotic behavior of the wave equation with nonlocal weak damping, anti-damping and critical nonlinearity}, Evol. Equ. Control Theory 12 (2023) 154--174.

\vspace{-0.3cm}

\bibitem{zhong1}C.Y. Zhao, C.K. Zhong and S.L. Yan,
\textit{Existence of a generalized polynomial attractor for the wave equation with nonlocal weak damping, anti-damping and critical nonlinearity}, Appl. Math. Lett. 128 (2022).

\vspace{-0.3cm}

%

\bibitem{zzhaoz}C.Y. Zhao, C.K. Zhong and X.M. Zhu,
\textit{Existence of compact $\varphi$-attracting sets and estimate of their attractive velocity for infinite-dimensional dynamical systems}, Discrete Contin. Dyn. Syst. Ser. B 27 (2022) 7493--7520.

\vspace{-0.3cm}

%

\bibitem{sunz}C. Zhou and C.Y. Sun,
\textit{Global attractor for a wave model with nonlocal nonlinear damping}, J. Math. Anal. Appl. 507 (2022) 125818.

\vspace{-0.3cm}

\bibitem{sunzhou}C. Zhou and C.Y. Sun,
\textit{Stability for a class of extensible beams with degenerate nonlocal damping}, J. Geom. Anal. 33 295 (2023).

\vspace{-0.3cm}

\bibitem{zhou}F. Zhou, Z.Y. Sun, K.X. Zhu and X.Y. Mei,
\textit{Exponential Attractors for the Sup-Cubic Wave Equation with Nonlocal Damping}, Bull. Malays. Math. Sci. Soc. 47 (2024) Paper No. 104.

\vspace{-0.3cm}

\bibitem{zz}X.M. Zhu, and C.K. Zhong,
\textit{A note on the polynomially attracting sets for dynamical systems}, J. Dynam. Differential Equations 36 (2024) 1873--1878.

}
\end{thebibliography}
\end{document}